\documentclass[pdflatex]{article} %% Use this to include graphics
\pdfoutput=1
\usepackage{amsmath,times}
\usepackage{amsthm}
\usepackage{amssymb}
\usepackage{latexsym}
\usepackage{amscd}
\usepackage[english]{babel}
\usepackage[latin1]{inputenc}
\usepackage{graphicx}
\usepackage{verbatim}
\newtheorem{theorem}{Theorem}[section]
\newtheorem{lemma}[theorem]{Lemma}
\newtheorem{proposition}[theorem]{Proposition}

\theoremstyle{definition}
\newtheorem{definition}[theorem]{Definition}

\newtheorem{remark}[theorem]{Remark}

\begin{document} 

\title{The Hele-Shaw Flow and Moduli of Holomorphic Discs}
\author{Julius Ross and David Witt Nystr\"om}
\date{12 January 2014}
\maketitle

\pagestyle{headings}

\abstract{We present a new connection between the Hele-Shaw flow, also known as two-dimensional (2D) Laplacian growth, and the theory of holomorphic discs with boundary contained in a totally real submanifold.  Using this we prove short time existence and uniqueness of the Hele-Shaw flow with varying permeability both when starting from a single point and also starting from a smooth Jordan domain.  Applying the same ideas we prove that the moduli space of smooth quadrature domains is a smooth manifold whose dimension we also calculate, and we give a local existence theorem for the inverse potential problem in the plane.}

\section{Introduction}

The Hele-Shaw flow is a model for describing the propagation of fluid in a Hele-Shaw cell.   Such a cell consists of two parallel plates separated by a small gap, and the fluid is confined to the narrow space between them.   This moving boundary model has been intensely studied for over a century, and is a paradigm for understanding more complicated systems such as the flow of water in a porous media, the melting of ice and models of tumour growth.  

%\begin{figure}[htb]
%\includegraphics[width=0.8\textwidth, clip=true, trim= 20 270 155 100 ]{heleshawcell.pdf} 
%\centering
%\caption{Cross section of a Hele-Shaw cell} 
%\end{figure}

\begin{figure}[htb]
\includegraphics[width=0.8\textwidth, trim=50 130 230 130]{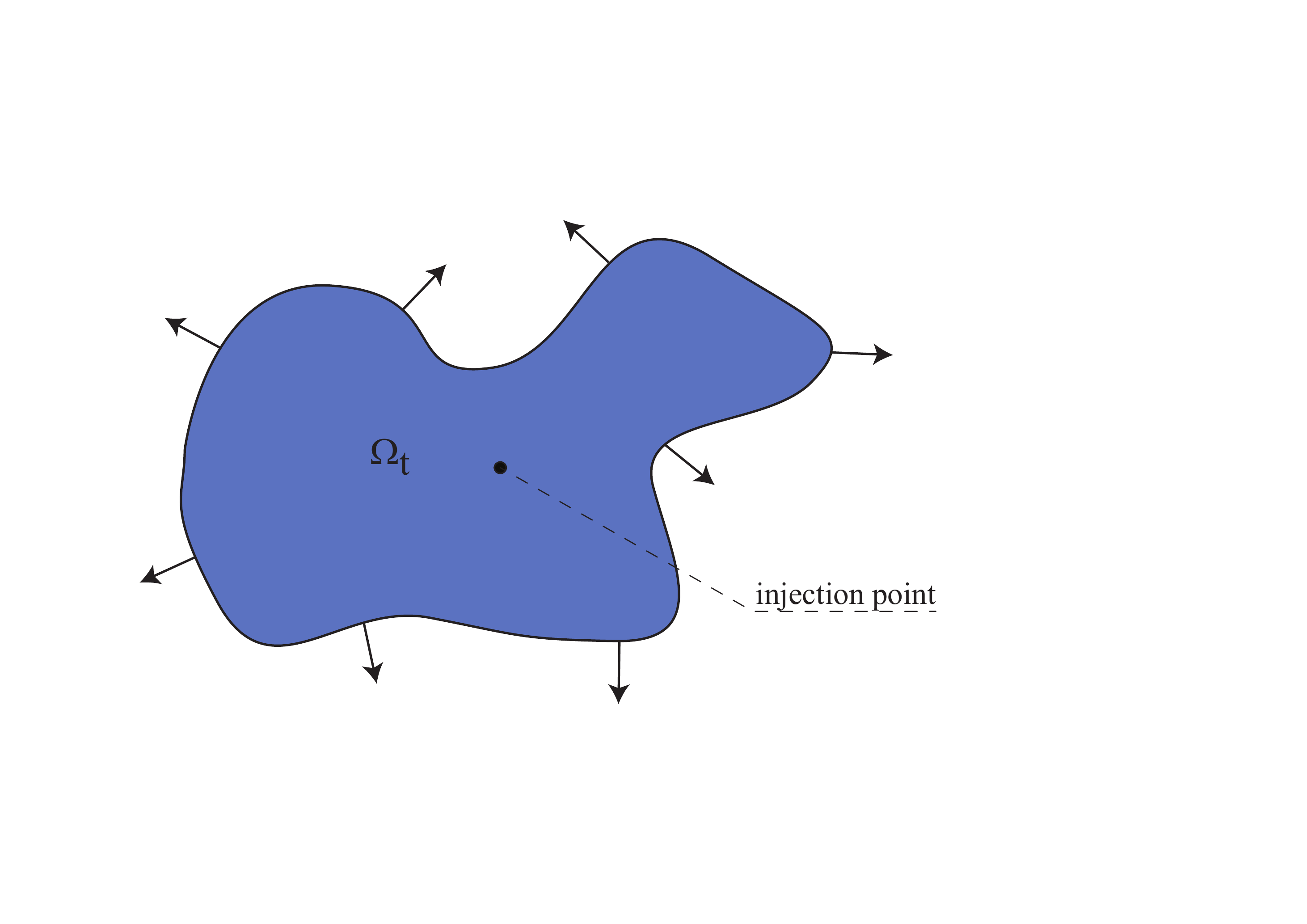}
\centering
\caption{Dynamics of the Hele-Shaw flow}  
\end{figure}

This flow is essentially two-dimensional, and one identifies the region occupied by fluid with a subset $\Omega_t$ of the plane $\mathbb{R}^2.$  In this paper we will consider the case of  a viscous incompressible fluid that is surrounded by air while new fluid is injected at a constant rate at the origin. If $p$ is the pressure in the fluid then the velocity $V$ of the fluid at that point (or really the mean velocity along the gap between the plates) is calculated as  
\begin{equation} \label{classic1}
V=-\nabla p.
\end{equation} 
Here we have neglected some physical constants. The pressure in the fluid is harmonic except at the origin, where it will have a logarithmic singularity, and is zero on the boundary since we assume that both the air pressure and surface tension is zero. 

To model this, let $\{\Omega_t\}$ be a family of domains in $\mathbb R^2$ containing the origin, and let $p_t$ denote the function which is zero on $\partial \Omega_t$ and which solves 
\[\Delta p_t=-\delta_0\]
 on $\Omega_t,$ where $\delta_0$ is the Dirac measure at the origin.   Then $\Omega_t$  is called a solution to the (classical) Hele-Shaw flow if for all $t$
\begin{equation} \label{classic2}
V_t=-\nabla p_t \quad \text{ on } \partial \Omega_t,
\end{equation} 
where $V_t$ is the normal outward velocity of $\partial\Omega_t$.  Clearly we need some regularity of the family $\{\Omega_t\}$ in order for this to make sense, thus the question of local existence of a classical solution to the Hele-Shaw flow with a given initial domain $\Omega_0$ is non-trivial.

Similarly one can model the case of a Hele-Shaw cell where the fluid moves through a porous material.  If the permeability of this material is given by a positive function $\kappa$ then,  by Darcy's law, the equation of motion of a fluid in the cell becomes
\begin{equation} \label{classic3}
V_t=-\kappa \nabla p_t \quad \text{ on } \partial \Omega_t,
\end{equation}
with $p_t$ defined as before.  This is a special case of elliptic growth of Beltrami type considered in \cite{Khavinson}. It is also equivalent to studying the classical Hele-Shaw flow on a Riemann surface, with the Riemannian metric encoding the permeability \cite{Hedenmalm}.\medskip

There is a vast literature on the Hele Shaw flow (see \cite{Gustafsson} and the references therein). The main short-time existence and uniqueness result for classical solutions to the Hele-Shaw flow in the case $\kappa\equiv 1$ is due to Kufarev and Vinogradov \cite{Vinogradov} and dates from 1948. It states that if $\Omega_0$ is a simply connected domain with real analytic boundary, then there exists a unique (indeed real analytic) solution $\{\Omega_t\}_{t\in(-\epsilon,\epsilon)}$ to the Hele-Shaw flow for some $\epsilon>0$.  Observe here that the solution extends both forward and backward in time.  In \cite{Reissig} Reissig and von Wolfersdorf gave a new proof of this result using a non-linear version of the Cauchy-Kovalevskaya theorem due to Nishida. Tian \cite{Tian} provided yet another proof relying on properties of the Cauchy integral of the free boundary, and recently Lin \cite{Lin} proved the same result using a result of Gustafsson on rational  solutions \cite{Gustafsson3} combined with a new perturbation theorem to get to the the general case.

There is also a slightly different setting of the Hele-Shaw flow, where the source of fluid is not a point but a curve inside the starting domain (see e.g. \cite{Antontsev, Elliott, Escher2}). In \cite{Escher} Escher and Simonett proved short-time existence of classical solutions to the Hele-Shaw flow in this setting, and in \cite{Escher2,Escher3} they proved short-time existence of classical solutions to the Hele-Shaw flow with surface tension. There is also related work of  Hanzawa \cite{Hanzawa} on classical solutions to the Stefan problem.

Less has been written about the case of varying permeability.   In \cite{Hedenmalm} Heden\-malm-Shimorin proved short-time existence under the assumption that $\kappa$ is real analytic, and the starting domain has real analytic boundary. They also proved short-time existence when the starting domain is empty, under an additional assumption that $\Delta \log \kappa >0,$ which translates to saying that the Riemann surface in question has negative curvature. See also \cite{Hedenmalm2} for related results.

\section{Summary of Results}

Our first main result concerns the existence of the Hele-Shaw flow with empty initial condition.

\begin{theorem}
Let $\kappa$ be a positive smooth function.  Then there is an $\epsilon>0$ and family $\Omega_t$  of domains in $\mathbb C$ containing the origin that satisfies the Hele-Shaw flow with permeability $\kappa$ and such that$$\operatorname{Area}(\Omega_t) =t \quad \text{ for } 0<t<\epsilon$$
(i.e. the limit of $\Omega_t$ as $t$ tends to zero is just the origin).
\end{theorem}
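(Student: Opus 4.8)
The plan is to recast the flow as a problem about holomorphic discs with totally real boundary and to solve it by a perturbation argument around the explicit constant‑permeability model. First I would record the weak (quadrature) reformulation: a bounded domain $\Omega$ with smooth Jordan boundary and $0\in\Omega$ is the time‑$t$ Hele‑Shaw domain with permeability $\kappa$ emanating from the empty set precisely when a weighted quadrature identity $\int_\Omega h\,d\mu_\kappa = t\,h(0)$ holds for every integrable harmonic $h$, with the weight $d\mu_\kappa$ built from $\kappa$ and the normalisation chosen so that $\operatorname{Area}(\Omega)=t$; equivalence of this with the classical free‑boundary formulation \eqref{classic3} for smooth boundaries is standard potential theory. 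Dualising, this says that a ``generalised Schwarz function'' $S_t$ — holomorphic on $\Omega\setminus\{0\}$, with a simple pole of prescribed residue at $0$, extending smoothly to $\overline\Omega$ — has boundary graph $(z,S_t(z))|_{\partial\Omega}$ lying on the totally real submanifold $M_\kappa=\{(z,w):w=\Phi(z,\bar z)\}$ of $\mathbb C\times\mathbb{CP}^1$, where $\Phi$ is a primitive of the weight encoding $\kappa$. Since $\kappa$ is merely smooth, $M_\kappa$ is a smooth — not real‑analytic — totally real submanifold, and the whole point is that the disc theory below needs only smoothness.

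So $\partial\Omega_t$ must be the boundary of a holomorphic disc into $\mathbb C\times\mathbb{CP}^1$ with one interior simple pole of residue proportional to $t$, boundary on $M_\kappa$, normalised so the pole sits at the origin (equivalently one works on the punctured disc after subtracting the model singularity). For $\kappa$ constant equal to $\kappa_0:=\kappa(0)$ this is solved explicitly: $M_{\kappa_0}$ is biholomorphic to the anti‑diagonal, $S_t(z)=r_t/z$ with $r_t$ proportional to $t$, and $\Omega_t$ is the round disc of area $t$ about $0$, with lifted map $\zeta\mapsto(\sqrt{r_t\kappa_0}\,\zeta,\ \sqrt{r_t/\kappa_0}/\zeta)$. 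To reach general $\kappa$ I would rescale: substituting $z=\lambda\zeta$ with $\lambda=\lambda(t)\to0$ turns the time‑$t$ problem for $\Omega_t$ into the time‑$1$ problem for $\lambda^{-1}\Omega_t$ with permeability $\zeta\mapsto\kappa(\lambda\zeta)$, and $\kappa(\lambda\,\cdot)\to\kappa_0$ in $C^\infty_{\mathrm{loc}}$, so the rescaled submanifolds converge smoothly to the anti‑diagonal while the rescaled model solution stays the fixed round disc of area $1$.

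It then remains to apply the Fredholm/implicit‑function machinery for holomorphic discs with totally real boundary conditions: after quotienting by the reparametrisations fixing the marked point, the moduli space of discs into $\mathbb C\times\mathbb{CP}^1$ with one interior simple pole at $0$ and boundary on a smooth totally real submanifold $C^\infty$‑close to the anti‑diagonal is a smooth manifold of the expected dimension (computed from the Maslov index of the relevant bundle pair), and the round‑disc solution of the model is a regular point. Hence the solution persists: for $M_{\kappa(\lambda\,\cdot)}$, which is $C^\infty$‑close to the anti‑diagonal once $t$ (so $\lambda$) is small, there is a nearby holomorphic disc, and rescaling its boundary back by $\lambda$ gives a smooth Jordan domain $\Omega_t$ with the required quadrature property, hence $\operatorname{Area}(\Omega_t)=t$, with $\Omega_t\to\{0\}$ as $t\to0$ by construction. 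I expect the main obstacle to be precisely the interior singularity: the clean Cauchy–Riemann Fredholm theory is for honest discs, so one must set up the linearised operator in the presence of a prescribed pole whose residue is the time parameter, verify its surjectivity, identify its kernel, and — most importantly — make all estimates uniform as $t\downarrow 0$ so the implicit function theorem applies on a fixed neighbourhood; a secondary point is pinning down the constants in the quadrature identity so that the Lebesgue area equals exactly $t$ rather than a multiple of it.
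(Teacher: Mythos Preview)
Your overall strategy is sound and shares the paper's key insight of lifting the Hele--Shaw flow to holomorphic discs in $\mathbb{C}\times\mathbb{P}^1$ with boundary in a totally real graph determined by $\kappa$, but the route you take to solve the empty--initial--condition case is genuinely different from the paper's.

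The paper does \emph{not} rescale to reduce to a constant--permeability model and then invoke the Fredholm theory for attached discs directly. Instead it passes through the Homogeneous Monge--Amp\`ere equation on $\mathbb{P}^1\times D$: one constructs an $S^1$--invariant boundary potential $\tilde F$ which (i) agrees with $c\,\phi(\theta z)$ near the origin (where $\Delta\phi=1/\kappa$) and (ii) is $C^2$--close to a rotation--invariant potential for which the HMAE has a trivial solution. Donaldson's openness theorem for regular HMAE solutions then furnishes a foliation of $\mathbb{P}^1\times D$ by holomorphic discs $g_x$, and a twist by the $\mathbb{C}^*$--action $\rho(\tau)\colon(z,w)\mapsto(\tau z,\tau^{-1}w)$ converts these into discs in $\mathbb{C}\times\mathbb{P}^1$ attached to $\Lambda=\{(z,c\,\partial\phi/\partial z)\}$ and passing through $(0,\infty)$; their projections are the $\Omega_t$. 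So the perturbation happens at the level of HMAE boundary data rather than at the level of the totally real submanifold, and the degeneration to a point is absorbed by the $S^1$--symmetry and the twisting, not by rescaling.

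Your rescaling argument is in fact closer in spirit to what the paper does for the \emph{non}-empty initial condition (its Section~7), where the Schottky--double index computation shows the moduli of attached discs through $(0,\infty)$ is a smooth one--manifold. Applying that same Fredholm package with $\lambda$ as an auxiliary parameter, at the regular model disc over the anti--diagonal, should indeed yield your family; the surjectivity you need is exactly the paper's $h^1(\mathcal X,\mathcal N)=0$ for $n=1$, $g=0$. What your approach buys is directness: you avoid the HMAE machinery entirely. What the paper's route buys is that Donaldson's openness theorem is quoted as a black box, so one never has to set up the disc Fredholm problem ``with pole'' by hand or worry about uniformity in $t$; the $S^1$--twisting automatically packages the whole flow as a single foliation. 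Your stated worry about the interior singularity is handled in both approaches the same way, by working in $\mathbb{C}\times\mathbb{P}^1$ where the pole becomes the smooth point $(0,\infty)$, and the uniformity concern largely evaporates once you rescale, since the implicit function theorem is applied once at $\lambda=0$ rather than separately for each small $t$. You should also note that the monotonicity of $\Omega_t$ (needed to invoke the moment characterisation of the flow) requires the same nonvanishing--normal--section argument the paper gives at the end of Section~7.
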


We remark that this improves on the  results of Hedenmalm-Shimorin since we only assume that $\kappa$ is smooth.     Despite this having a rather natural physical interpretation, there appear to be few other results with empty initial condition.  This may be because many of the techniques in the study of the Hele-Shaw flow rely on identifying the starting domain with the unit disc (say through the Riemann Mapping Theorem) and thus cannot be applied.  \medskip

%We shall show how the lifting technique above can be used to prove short term existence in this setting as well, which will require more sophisticated machinery concerning holomorphic discs, specifically results of Donaldson concerning a suitable homogeneous Monge Amp\`ere equation.

Our second result on the Hele-Shaw flow is short term existence and uniqueness of from a non-empty initial condition.

\begin{theorem}\label{thm:heleshawmain}
Let $\Omega_0$ be smooth Jordan domain containing the origin and let $\kappa$ be a smooth positive function defined in a neighbourhood of $\partial \Omega_0.$   Then there exists an $\epsilon >0$ such that there exists a unique smooth increasing family of domains $\{\Omega_t\}_{t\in [0,\epsilon)}$ which solves the Hele-Shaw flow with permeability $\kappa$ forward in time.
\end{theorem}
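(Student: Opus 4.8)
The plan is to trade this non-local moving boundary problem for a local one about holomorphic discs with boundary on a totally real surface, and then to run an implicit function argument in the associated moduli space. For the \emph{reformulation}: after extending $\kappa$ to a globally defined smooth positive function and writing $d\mu=\kappa^{-1}\,dA$, differentiating in $t$ and applying Green's identity to the pressure $p_t$ shows that a smooth increasing family $\{\Omega_t\}$ solves the flow forward in time precisely when, for every $h$ harmonic in a neighbourhood of $\overline{\Omega_t}$, one has $\int_{\Omega_t} h\,d\mu=\int_{\Omega_0}h\,d\mu+t\,h(0)$. Specialising to $h(\zeta)=1/(z-\zeta)$ turns this into a statement about the Cauchy transform of $\mu_t:=\kappa^{-1}\chi_{\Omega_t}\,dA$: off $\overline{\Omega_t}$ it equals the Cauchy transform of the fixed measure $\mu_0$ plus $t/z$. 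Splitting the Cauchy transform across $\partial\Omega_t$ by means of $\partial_{\bar z}$, its boundary values record a Schwarz-type function of $\partial\Omega_t$, and the upshot --- this is the link announced in the abstract --- is that, recording for each $z\in\partial\Omega_t$ the relevant Cauchy data $w(z)$, the resulting loop in $\mathbb C^2$ lies on a \emph{fixed} smooth totally real surface $\Lambda$ built only from $\kappa$ and from the (fixed) exterior data of $\Omega_0$, and moreover that this loop bounds a holomorphic disc $u_t\colon(\mathbb D,\partial\mathbb D)\to(\mathbb C^2,\Lambda)$ --- essentially a uniformising map of $\Omega_t$ paired with the holomorphic continuation of its Cauchy data. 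The disc $u_0$ attached to $\partial\Omega_0$ is the ``central'' disc.

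\emph{The moduli space.} I would then invoke the general theory of holomorphic discs with boundary on a totally real submanifold: near $u_0$, and after a normalisation (say fixing the images of three boundary points), such discs form a smooth finite-dimensional manifold $\mathcal M$ provided $u_0$ is regular, the dimension being read off from the partial (Maslov) indices of the linearised Riemann--Hilbert problem; the Hele-Shaw flow then corresponds to a distinguished one-real-parameter subfamily $t\mapsto u_t$ of $\mathcal M$ through $u_0$, indexed by the constant in the quadrature identity. Regularity of $u_0$ is expected to follow from positivity of $\kappa$, which should force the partial indices to be non-negative and hence the linearised $\bar\partial$-operator with totally real boundary condition to be surjective. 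Finally a Hopf-lemma argument, using that the normal speed of $\partial\Omega_t$ equals $\kappa|\nabla p_t|>0$, shows that the infinitesimal deformation generating the family is nowhere zero on $\partial\mathbb D$, so the curves $\partial\Omega_t$ are pairwise disjoint and sweep out a one-sided neighbourhood of $\partial\Omega_0$, i.e.\ they foliate it.

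\emph{Conclusion.} This foliation yields the required family $\{\Omega_t\}_{t\in[0,\epsilon)}$; it is smooth in all variables because holomorphic discs with boundary on the smooth totally real $\Lambda$ are smooth up to $\partial\mathbb D$ by elliptic boundary regularity, and reversing the reformulation above shows it solves the Hele-Shaw flow with permeability $\kappa$. For uniqueness, the boundary at each time of any smooth solution lifts --- again by the reformulation --- to a holomorphic disc on $\Lambda$ close to $u_0$ with the same normalisation, hence coincides with $u_t$ by uniqueness in $\mathcal M$.

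\emph{Main obstacle.} I expect two difficulties. The technical heart is carrying out the reformulation at finite smoothness: one must show that the assignment $\Omega_0\mapsto\Lambda$ genuinely lands among smooth \emph{totally real} surfaces, and that ``$\partial\Omega_t$ bounds a holomorphic disc on $\Lambda$'' is \emph{exactly} equivalent to the quadrature identity, so that the non-locality of the pressure is completely absorbed into the geometry of $\Lambda$ --- all without holomorphically extending the Schwarz function, which is unavailable when $\partial\Omega_0$ is only smooth. The second difficulty is the degenerate endpoint $t=0$: since backward-in-time Hele-Shaw from a merely smooth domain is ill-posed, $u_0$ sits at the edge of the one-parameter family, so the implicit function theorem and the foliation estimates must be made uniform down to $t=0$ and one must verify that $\Omega_t\to\Omega_0$ as $t\downarrow0$; this is precisely where the choice of normalisation and the local structure of $\Lambda$ near $u_0$ come into play.
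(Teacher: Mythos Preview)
Your overall strategy --- trade the Hele-Shaw flow for holomorphic discs attached to a totally real surface and study their moduli --- is exactly the paper's approach. The paper makes $\Lambda$ completely explicit: it is the graph $\{(z,\partial\phi/\partial z)\}$ for a smooth strictly subharmonic $\phi$ with $\Delta\phi=1/\kappa$ outside $\Omega_0$, and the disc attached to $\Omega_0$ is the graph of a Schwarz function $S_0$ for $(\Omega_0,\phi)$, holomorphic on $\Omega_0$ except for a simple pole at $0$. The constraint singling out the Hele-Shaw family is not a three-point boundary normalisation but the requirement that the (embedded) disc pass through the interior point $(0,\infty)\in\mathbb C\times\mathbb P^1$; the index computation is done via the Schottky double and Riemann--Roch (deforming to the unit-disc model to read off $\deg\mathcal N=2$, hence $h^1=0$ and real index $1$) rather than via Maslov/partial indices, though the two are of course equivalent.

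The point you flag as your ``second difficulty'' is in fact where the paper's main technical idea lies, and it dissolves the problem rather than confronting it. The potential $\phi$ is built from the Cauchy integral of $\partial\phi_0/\partial z$ along $\partial\Omega_0$ via Sokhotski--Plemelj; this forces $\Delta\phi=1/\kappa$ only \emph{outside} $\Omega_0$, while inside one gets a modified permeability $\kappa'=1/\Delta\phi$. With this choice a genuine Schwarz function $S_0$ exists even when $\partial\Omega_0$ is merely smooth, so $\Sigma_0$ is an honest interior point of the one-dimensional moduli space, the implicit function theorem applies cleanly, and the family extends smoothly to both sides of $t=0$. The backward branch solves the flow for $\kappa'$, not $\kappa$; only the forward branch, where $\partial\Omega_t$ sits outside $\Omega_0$, recovers the original permeability. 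Thus there is no ``edge'' at $t=0$ to worry about: the asymmetry between forward and backward time is absorbed entirely into the freedom to alter $\kappa$ on the interior, and your concerns about uniform estimates down to $t=0$ do not arise.
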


We remark that this goes beyond the results of Hedenmalm-Shimorin since we do not require $\kappa$ or the boundary of the initial domain to be real analytic.  However if we do make these assumptions then we get an alternative proof of their result that holds both forwards and backwards in time. \medskip

Our approach will involve controlling the complex moments of the flow $\Omega_t$, which fits into the more general framework of quadrature domains (as advocated by Gustafsson and Shapiro).  We say that a bounded domain $\Omega$ is a \emph{quadrature domain} if there are points $z_1,\ldots,z_m$ in $\Omega$ and complex coefficients $c_{kj}$ for $1\le k\le m$ and $0\le j\le n_{k}-1$ such that the quadrature identity
$$\int_\Omega f dA = \sum_{k=1}^m \sum_{j=1}^{n_k-1} c_{kj} f^{(j)}(z_k)$$
holds for all integrable holomorphic functions $f$ in $\Omega$.  Then the integer $n=\sum_{k=1}^m n_k$ is called the order of the domain.  

Using the same ideas that we use to study the Hele-Shaw flow we shall prove the following about such objects.

\begin{theorem} \label{quadraturetheorem}
The moduli space of smooth quadrature domains of order $n$ and connectivity $c$ is a smooth manifold of real dimension $4n+c-2.$
\end{theorem}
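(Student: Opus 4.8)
The plan is to show that the moduli space of smooth quadrature domains of order $n$ and connectivity $c$ can be parametrized by an open subset of a finite-dimensional space of data, and that the ``quadrature domain'' constraint cuts this down transversally to a smooth manifold of the stated dimension. The starting point is the classical description of quadrature domains: a bounded domain $\Omega$ is a quadrature domain of order $n$ if and only if it is the image of the unit disc $\mathbb{D}$ (or, for connectivity $c$, of a fixed model $c$-connected planar domain) under a rational conformal map of a controlled degree, equivalently if the Schwarz function $S(z)$ of $\partial\Omega$ extends meromorphically to all of $\Omega$ with a total of $n$ poles (counted with multiplicity). So I would first fix a reference $c$-connected bordered Riemann surface $R$ and consider the space of holomorphic maps $\varphi\colon R\to\mathbb{C}$ that are smooth up to the boundary, univalent, and whose Schwarz-function data on the image has exactly $n$ poles; this is the candidate chart.

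The key technical step is to identify the ``correct'' Teichmüller-type moduli of the pair (conformal structure of a $c$-connected surface, plus the extra poles/points) and then add the dimension contributed by the conformal map itself modulo the automorphisms of the disc or annulus-type model. Concretely, I would:
\begin{itemize}
\item parametrize a smooth Jordan-type quadrature domain by its exterior/interior conformal map together with the location of the $n$ ``quadrature nodes'' $z_k$ and the order data $n_k$, and
\item compute the naive dimension count: each of the $m$ nodes contributes $2$ real parameters for its location, each coefficient $c_{kj}$ contributes $2$ real parameters, giving $2n$ from the nodes-with-multiplicity side and $2n$ from the coefficients, then subtract $2$ for the automorphisms acting (rescaling/translation normalization, reflecting the fact that the quadrature identity is invariant under affine reparametrization in a $2$-real-dimensional way), and add $c$ for the moduli of the underlying $c$-connected domain (the $3c-3$ Teichmüller count collapses after accounting for the conformal map's own parameters, leaving a net $+c$); these assemble to $4n+c-2$.
\end{itemize}

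To make this rigorous rather than heuristic, I would use the paper's holomorphic-disc technology: by the correspondence established earlier in the paper between quadrature-type data and moduli of holomorphic discs with boundary on a totally real submanifold, the moduli space is cut out as the zero set of a Fredholm section whose linearization is a $\bar\partial$-type operator on the disc (or $c$-connected surface) with prescribed pole data. I would compute the index of this linearized operator via a Riemann--Roch computation on the bordered surface: the index equals (twice the number of prescribed poles, adjusted by the Euler characteristic contribution $c-2$ of a $c$-connected planar domain, which has Euler characteristic $2-c$), and then invoke an automatic transversality / unobstructedness statement --- holomorphic discs with boundary on a totally real submanifold in $\mathbb{C}$ are unobstructed in the relevant sense --- to conclude that the moduli space is a smooth manifold of dimension equal to this index, namely $4n+c-2$.

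The main obstacle, I expect, is the transversality/surjectivity of the linearized operator at the boundary, i.e. showing the $\bar\partial$-problem with prescribed poles and totally-real boundary condition has vanishing cokernel. For the disc ($c=1$) this is the classical automatic-regularity phenomenon, but for higher connectivity one must control the cokernel contributions coming from holomorphic differentials on the $c$-connected surface that are compatible with the boundary condition; the bookkeeping of how the $c-1$ ``extra'' moduli of an annulus-or-worse interact with the pole constraints is delicate and is where the additive $+c$ (rather than, say, $+3(c-1)$ or $+(c-1)$) must be pinned down carefully. A secondary subtlety is verifying that the parametrization is genuinely a chart --- that distinct data give distinct domains and that every smooth quadrature domain arises --- which requires the uniqueness half of the correspondence and a normalization of the automorphism freedom; once these are in place the manifold structure follows by the standard argument that a Fredholm section cutting out its zero set transversally produces a manifold of dimension equal to the index.
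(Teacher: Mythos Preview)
Your proposal points in the right direction but is missing the concrete mechanism that does all the work. The paper does not attempt any conformal-map parametrization or heuristic parameter count; it goes directly to the holomorphic-curve picture and computes everything there. The graph $\Sigma=\{(z,S(z))\}$ of the Schwarz function sits in $\mathbb{C}\times\mathbb{P}^1$ with boundary in $\Lambda=\{(z,\bar z)\}$, and crucially $\Lambda$ is the fixed locus of the antiholomorphic involution $\rho:(z,w)\mapsto(\bar w,\bar z)$. This means the Schottky double $\mathcal{X}=\Sigma\cup\rho(\Sigma)$ is realized as an actual closed curve in $\mathbb{P}^1\times\mathbb{P}^1$, and the doubled normal bundle $\mathcal{N}$ is its honest normal bundle there. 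The degree of $\mathcal{N}$ is then read off by adjunction: $\deg\mathcal{N}=2(H_1+H_2)\cdot\mathcal{X}+\deg K_{\mathcal{X}}=4n+2g-2$ with $g=c-1$. Since $n\ge 1$ this exceeds $2g-2$, so $h^1(\mathcal{X},\mathcal{N})=0$ automatically, and Riemann--Roch gives $h^0=4n+g-1=4n+c-2$. Transversality and the dimension fall out of the same one-line degree inequality; there is no separate ``automatic regularity'' argument to run, and the higher-connectivity case is no harder than the disc.

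By contrast, your heuristic count has real problems as stated: there is no $2$-real-dimensional affine automorphism group acting on quadrature domains (translating a quadrature domain changes the nodes, not the domain modulo anything), and the ``net $+c$'' from Teichm\"uller considerations is asserted rather than derived. These adjustments are reverse-engineered to hit the answer rather than justified. More importantly, your sketch of the rigorous version leaves both the index computation and the surjectivity of $\bar\partial$ as black boxes labelled ``Riemann--Roch on the bordered surface'' and ``automatic transversality''; the paper's contribution is precisely to make those boxes transparent via the Schottky double and the adjunction formula in $\mathbb{P}^1\times\mathbb{P}^1$. Without that step you do not have a proof, only an outline of where a proof should go.
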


The fact that the moduli space of smooth quadrature domains is a manifold appears to be previously unknown.  At a generic point its dimension can be seen in the following way.  Suppose that the points $z_k$ are all distinct and $n_{k}=1$ for all $k$.  Then the quadrature identity becomes
\begin{equation}
\int_{\Omega} f dA = \sum_{k=1}^m c_k f(z_k)\label{eq:quadsimple}
\end{equation}
for some $c_k\in \mathbb C$, and points $z_1,\ldots,z_m$ where now $n=m$ is the order of $\Omega$.  It was proved by Gustafsson that a generic smooth $\Omega$ of connectivity $c$ satisfying \eqref{eq:quadsimple} moves in at least a $c-1$ dimensional family that still satisfies \eqref{eq:quadsimple} (see \cite[p11]{Gustafsson6} and \cite[Thm12]{Gustafsson4}).  Then we have a further $2n$-real parameters available by moving the points $z_k$, and a further $2n$-real parameters available by moving the complex coefficients $c_k$, subject to the constraint that $c_1+\cdots+c_n\in \mathbb R$, coming from putting in $f=1$ into \eqref{eq:quadsimple} and noting that the area of $\Omega$ must be real-valued.  Thus in total we have $4n+c-2$ real parameters, and the above theorem implies that there are no more, as well as proving this dimension at a non-generic point. 

\begin{center}
  *
\end{center}

In our opinion, the interest of this work lies not only in the particulars of the above theorems but also in the techniques introduced for their proof.   We shall show that a solution to the Hele-Shaw flow is equivalent to being able to lift $\Omega_t\subset \mathbb R^2=\mathbb C$ to a family of holomorphic discs $\Sigma_t$ in $\mathbb C\times \mathbb P^1$ such that 

\begin{enumerate}
\item $\Omega_t$ is the image of the projection of $\Sigma_t$ to $\mathbb{C}$ and
\item The discs $\Sigma_t$ attach along their boundaries to a certain totally real submanifold $\Lambda$ of  $\mathbb{C}^2\subset \mathbb{C}\times \mathbb{P}^1.$ and
\item The closure of $\Sigma_t$ intersects $\mathbb{C}\times \{\infty\}\subset \mathbb C\times \mathbb P^1$ only at the point $(0,\infty).$ (See Figure 2).
\end{enumerate}

%(1)  $\Omega_t$ is the image of the projection of $\Sigma_t$ to $\mathbb{C}$ and (2) the discs $\Sigma_t$ attach along their boundaries to a certain Lagrangian submanifold $\Lambda$ of  $\mathbb{C}^2\subset \mathbb{C}\times \mathbb{P}^1.$ that we describe below and (3)  $\Sigma_t$ intersects the divisor $\mathbb{C}\times \{\infty\}$ only at the point $(0,\infty).$ 

\begin{figure}[htb]
  \centering
\includegraphics[width=0.8\textwidth, trim=0 50  100 70]{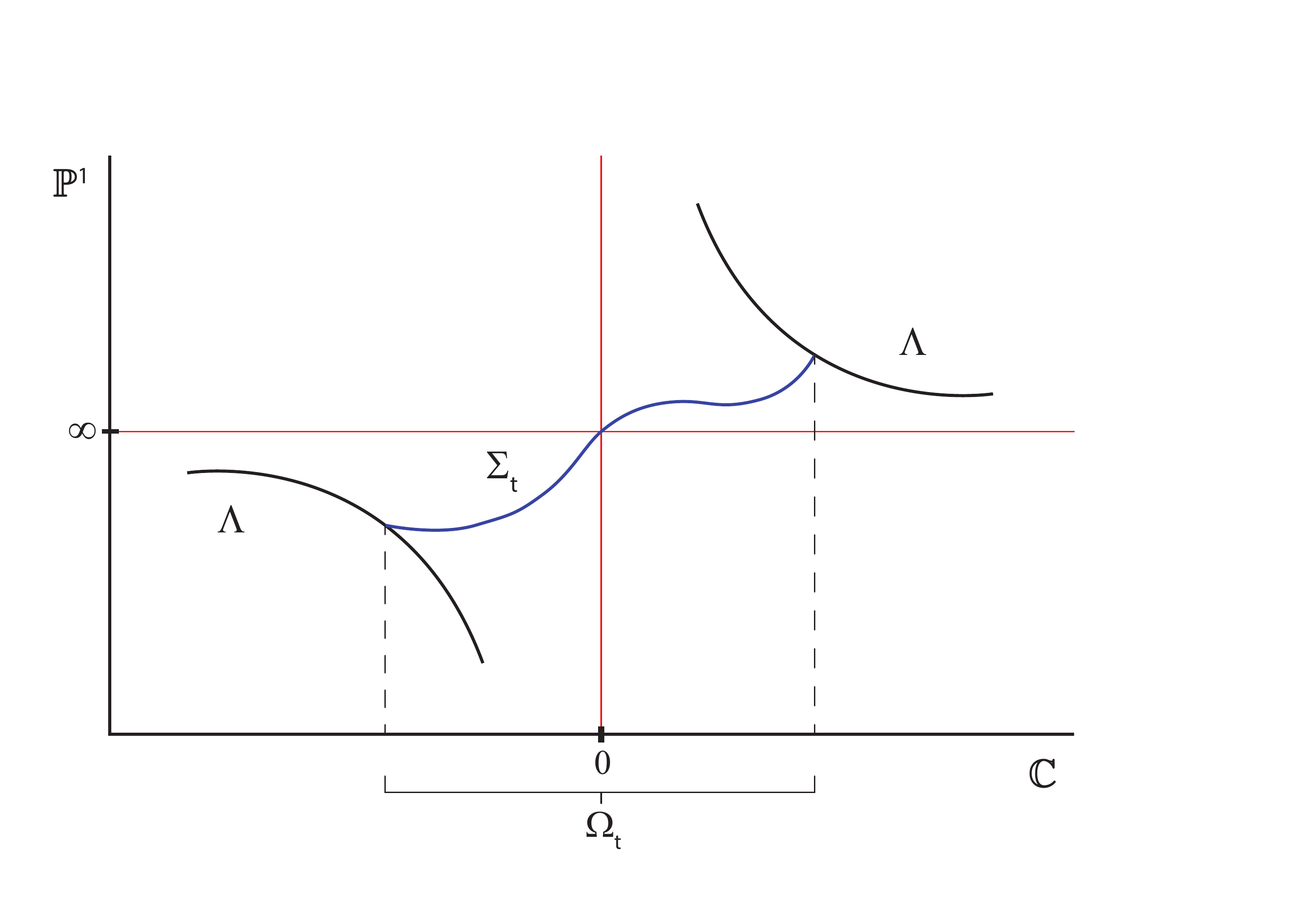} 
\caption{Lifting the Hele-Shaw flow}
\end{figure}

This correspondence comes about through thinking of the lifts $\Sigma_t$ as the graph of certain ``Schwarz'' functions on $\Omega_t$ that are holomorphic except for a simple pole at the origin.   Starting with $t=0$, we construct a smooth strictly subharmonic function $\phi$ (that encodes the permeability by $\kappa = 1/\Delta \phi$) such that $\Omega_0$ admits such a Schwarz function $S_0$.   Then setting
$$ \Lambda = \{ (z, \frac{\partial \phi}{\partial z}) :\} \subset \mathbb C^2,$$
the graph of $S_0$ will be  a  holomorphic disc $\Sigma_0$ whose boundary lies in $\Lambda$.

Once this is done we are in a position to apply the well-developed theory of embedded holomorphic discs.  For the proof of the existence of the Hele Shaw flow with empty initial condition we will rely on a connection between holomorphic discs and solutions to the Homogeneous Monge Amp\'ere equation, ultimately relying on an openness theorem of Donaldson.  For the other short time existence theorem we will consider the moduli space of embedded discs nearby $\Sigma_0$ that are attached to $\Lambda$ and that pass through $(0,\infty)$.  We shall show this moduli space is a smooth manifold of real dimension one (which immediately gives both the existence and uniqueness), which relies on a standard argument using the associated Schottky double.\medskip

This lifting interpretation of the Hele-Shaw flow through Schwarz functions comes about through looking at the complex moments
$$ M_k(t) = \int_{\Omega_t} z^k \frac{1}{\kappa} dA$$
where $dA$ denotes the Lebesgue measure on $\mathbb C$.  It is known that the Hele-Shaw flow is characterised by the condition that $M_k(t)$ is constant with $t$ for $k\ge 1$ (and so the area $M_0(t)$ is linear with $t$).    One can similarly study the problem of flows $\Omega_t$ for which  $M_k(t)$ for $k\ge 1$ are allowed to vary in a particular way, and this puts the Hele-Shaw flow into the more general picture of ``inverse potential problems''.  We use the same approach as above to prove a statement about these more general flows:

\begin{theorem}\label{thm:summarymoment}
Let $\Omega$ be a smooth Jordan domain, and $V$ be a smooth, nowhere vanishing outward pointing normal vector field on $\partial \Omega$.   Then there exists a smooth variation $\Omega_t$ for $t\in [0,\epsilon)$ for some $\epsilon>0$ such that $$\frac{d}{dt}\big\vert_{t=0}\partial \Omega_t=V$$ and whose moments vary linearly in $t$.  
\end{theorem}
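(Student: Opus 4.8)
The plan is to derive the theorem from the variable-permeability existence result, Theorem~\ref{thm:heleshawmain}, by choosing a permeability for which the ordinary Hele--Shaw flow out of $\Omega$ has precisely $V$ as its initial velocity; the linearity of the moments then drops out of Green's identity. Write $V = v\, n$ on $\partial\Omega$, where $n$ is the outward unit normal and $v$ is a smooth function, which the hypotheses (nowhere vanishing, outward pointing) force to be strictly positive. Fix a point $z_0 \in \Omega$; translating $\mathbb C$ we may assume $z_0 = 0$ (a translation replaces each complex moment by a fixed linear combination of moments, so linearity in $t$ is unaffected). Let $p_0$ be the Green's function of $\Omega$ with pole at the origin, i.e. the solution of $\Delta p_0 = -\delta_0$ on $\Omega$ with $p_0 = 0$ on $\partial\Omega$. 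By the Hopf lemma $\partial_n p_0 < 0$ on $\partial\Omega$, so $\kappa := -v/(\partial_n p_0)$ is a smooth, strictly positive function on $\partial\Omega$, which I extend to a smooth strictly positive function on a neighbourhood of $\overline\Omega$.

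Now I would apply Theorem~\ref{thm:heleshawmain} to $\Omega_0 := \Omega$ with this permeability $\kappa$: it produces an $\epsilon > 0$ and a smooth increasing family $\{\Omega_t\}_{t\in[0,\epsilon)}$ solving the Hele--Shaw flow with permeability $\kappa$. At $t = 0$ the pressure is $p_0$ and the boundary moves with normal velocity $-\kappa\, \partial_n p_0 = v$, so $\frac{d}{dt}\big\vert_{t=0}\partial\Omega_t = v\, n = V$, as required. For the moments $M_k(t) = \int_{\Omega_t} z^k\, \frac{1}{\kappa}\, dA$, differentiating along the moving boundary and using that its normal velocity is $-\kappa\, \partial_n p_t$ gives $\frac{d}{dt}M_k(t) = -\int_{\partial\Omega_t} z^k\, \partial_n p_t\, ds$, since $\kappa$ cancels; applying Green's second identity to the harmonic function $z^k$ and to $p_t$ on $\Omega_t$ with a small disc about the origin removed, and letting the radius tend to $0$ so that the logarithmic pole of $p_t$ contributes $z^k\big\vert_{z=0}$ and the integral over $\partial\Omega_t$ drops out because $p_t = 0$ there, yields $\frac{d}{dt}M_k(t) = z^k\big\vert_{z=0}$, independent of $t$. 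Hence each $M_k$ is affine in $t$. (This is just the familiar fact that the complex moments of a Hele--Shaw flow are constant for $k \ge 1$ and linear for $k = 0$.)

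If instead one wants the \emph{Lebesgue} moments $\int_{\Omega_t} z^k\, dA$ to be linear, this short-cut is unavailable: one should take $\kappa \equiv 1$ and drive the flow out of $\Omega$ by a fixed source $\mu$ supported in a compact $K \subset \Omega$, whose moments $\int_K z^k\, d\mu$ equal the values $\int_{\partial\Omega} z^k v\, ds$ that $V$ imposes and whose Dirichlet potential on $\Omega$ has normal derivative $-v$ on $\partial\Omega$, so that the initial velocity is again $V$ and $\frac{d}{dt}\int_{\Omega_t} z^k\, dA = \int_K z^k\, d\mu$ is constant for $t$ small enough that $K \subset \Omega_t$. Producing such a $\mu$ is tantamount to solving the Cauchy problem for the Laplacian in a collar inside $\partial\Omega$ with the (merely smooth) data $(0,-v)$ and then proving short-time smooth existence of the resulting flow, and this is where the theorem has real content: exactly as in Theorem~\ref{thm:heleshawmain}, with $\partial\Omega$ and the data only smooth rather than real analytic one cannot invoke Cauchy--Kovalevskaya, and the main obstacle is to construct the initial embedded holomorphic disc $\Sigma_0 \subset \mathbb C\times\mathbb P^1$ together with the strictly subharmonic function $\phi$ with $\Lambda = \{(z,\frac{\partial\phi}{\partial z})\}$, and to identify the moduli space of nearby discs attached to $\Lambda$ with the prescribed behaviour along $\mathbb C\times\{\infty\}$ as a smooth manifold --- a computation carried out, as for the Hele--Shaw flow itself, via the associated Schottky double --- from which the flow is read off.
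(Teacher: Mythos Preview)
Your short-cut proves a different, weaker statement than the one intended. In the paper the density $\rho$ (equivalently the permeability $\kappa=1/\rho$) is fixed in advance --- Section~8 opens with a given $\rho$, and Theorem~\ref{thm:summarymoment} is restated verbatim there as Theorem~\ref{thm:momentflowlinear} --- and the claim is that for this fixed $\rho$, every outward $V$ arises as the initial velocity of a flow whose $\rho$-moments $\int_{\Omega_t} z^k\rho\,dA$ are affine in $t$. Your construction instead manufactures $\kappa$ from $V$ and then observes that the $1/\kappa$-moments of the resulting Hele--Shaw flow are affine. That is essentially just a rephrasing of Theorem~\ref{thm:heleshawmain}: any strictly positive smooth boundary speed is the initial speed of some Hele--Shaw flow for some permeability, and for that flow the higher moments are in fact constant. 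It does not touch the question of whether, say, the Lebesgue moments can be made affine for a prescribed $V$ with nonzero slopes $\int_{\partial\Omega} z^k v\,ds$, which is where the theorem has content (and which you yourself flag in your last paragraph).

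Your third paragraph sketches a route to the genuine statement via a distributed interior source $\mu$, but this is not a proof: producing a $\mu$ whose Dirichlet potential has normal derivative exactly $-v$ on a merely smooth $\partial\Omega$ is an ill-posed Cauchy problem, and Theorem~\ref{thm:heleshawmain} as stated covers only a single point source, so short-time existence of the flow you want is not available off the shelf. The paper's argument is different and avoids any such source. One keeps $\phi$ with $\Delta\phi=\rho$ outside $\Omega$ and perturbs the totally real submanifold to
\[
\Lambda_s=\Big\{\big(z,\tfrac{\partial\phi}{\partial z}+s f_-\big)\Big\},
\]
where $f_-$ is the exterior piece of the Cauchy integral of the one-form $g\,dz$ determined by $V\rho\,ds=g\,dz$ on $\partial\Omega$. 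A perturbation theorem for attached discs (Theorem~\ref{perturbationtheorem}) deforms the Hele--Shaw family $\Sigma_t$ to a two-parameter family $\Sigma_{s,t}$ with boundary in $\Lambda_s$; one then selects $t=t(s)$ so that the residue of the Schwarz function $S_s$ at $0$ stays fixed, and sets $\Omega_s=\pi(\Sigma_{s,t(s)})$. A direct Stokes computation using $S_s=\partial\phi/\partial z + s f_-$ on $\partial\Omega_s$ shows
\[
\int_{\Omega_s} z^k\rho\,dA-\int_{\Omega_0} z^k\rho\,dA=-s\int_{\partial\Omega_s} z^k f_-\,dz,
\]
and since the flow is initially outward (because $V$ is) the right-hand side is $-2\pi i\, s\, b_{k+1}$, independent of $s$. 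Finally, a uniqueness lemma (Proposition~\ref{uniquenessvectorfields}) --- two normal fields inducing the same first variation of all moments must coincide --- identifies the initial velocity as $V$.
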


Note that this theorem is not immediately obvious since a-priori there could be relations among the higher moments; thus we think of it as showing a kind of independence among them.   In fact it is known that if $\kappa$ is real analytic, and $\Omega$ has real analytic boundary, then the moments $M_k$ for $k\ge 0$ provide parameters for the space of domains with analytic boundary near to $\Omega$. We refer the reader to Theorem \ref{thm:momentflowlinear2} for a more precise version of Theorem \ref{thm:summarymoment} in this setting.   If we set $V=-\nabla p$ in the above theorem, where $p$ denotes the Green function of $\Omega,$ we recover the Hele-Shaw flow, and thus this result generalises Theorem \ref{thm:heleshawmain}.\medskip

\noindent {\bf Acknowledgements:}   We first of all wish to thank Bj\"orn Gustafsson for conversations and guidance.  We also thank Robert Berman, Bo Berndtsson, Claude LeBrun and H\r{a}kan Hedenmalm for valuable input,  and Johanna Zetterlund for assistance with creation of several images. JR also acknowledges Colin Cotter and Duncan Hewitt for patiently answering basic questions about fluid mechanics.    During this project JR was supported by an EPSRC Career Acceleration Fellowship.   \\

\noindent {\bf Terminology: } We identify $\mathbb R^2$ with the complex plane $\mathbb C$ in the standard way.  A \emph{domain} $\Omega\subset \mathbb R^2$ is a set that is open and connected and  its boundary is $\partial \Omega = \overline{\Omega} - \Omega$ where the bar denotes the topological closure.  The \emph{connectivity} of $\Omega$ is the number of connected components of $\partial \Omega$. If $\partial \Omega$ can be written locally as the graph of a smooth function then we shall say that $\Omega$ has \emph{smooth boundary} or simply that $\Omega$ is \emph{smooth}, with analogous definitions if this graph can be taken to be real analytic, and in this case there is a well defined unit outward normal vector field $n$ on $\partial \Omega$.  We say $\Omega$ is a \emph{smooth Jordan domain} if it is the interior region determined by a  smooth Jordan curve (i.e.\ a smooth non-self intersecting loop in $\mathbb R^2$). 

If $I\subset \mathbb R$ is an interval then a family of domains $\Omega_t$ for $t\in I$ is \emph{increasing} if $\Omega_t\subset \Omega_{t'}$ for $t<t'$ and the family $\Omega_t$ is said to be \emph{smooth} if the boundary $\partial \Omega_t$ can be written locally as the graph of a smooth function that depends smoothly on $t$.  If $\Omega_0$ is smooth with unit normal vector field $n$ on $\partial \Omega_0$ then an increasing smooth family $\Omega_t$ for $t\in (-\epsilon,\epsilon)$ is determined by $\partial \Omega_t = \{ x + f(x,t) n_x : x\in \partial \Omega_0\}$ for some positive smooth function $f_t(x)=f(x,t)$ on $\partial \Omega_0$, and  the \emph{normal outward velocity} of $\Omega_t$ at $t=0$ is
\[V_0:= \frac{d}{dt}\big\vert_{t=0} \partial \Omega_t := \frac{df_t}{dt}\big\vert_{t=0} n.\] 

\section{Lifting the Hele Shaw Flow}

\subsection{Complex moments} \label{moments}

Let $\Omega_t$ be a family of domains in $\mathbb R^2=\mathbb C$ and $\kappa$ be a positive smooth function.     For $k\geq 0$ the $k$th \emph{complex moment} of $\Omega_t$ is 
\[M_k(t):=\int_{\Omega_t}z^k \frac{1}{\kappa} dA,\]
 where $dA$ denotes Lebesgue measure.  Clearly then $M_0(t)$ is just the area of $\Omega_t$, and we refer to the $M_k$ for $k\ge 1$ as the \emph{higher moments}.

An important discovery by Richardson \cite{Richardson} is that these moments are conserved by the Hele-Shaw flow.   Since we are assuming the fluid is injected at a constant rate, we may as well reparametrise $t$ so that  $M_0(t) = \operatorname{Area}(\Omega_t) = M_0(0)+t$.   The upshot then is that this system is integrable, by which we mean that these moments characterise the Hele-Shaw flow:

% \begin{theorem}\label{thm:moments}\
%Let $\Omega_t$ be given by the Hele-Shaw flow.   Then $M_0(t) = M_0(t) +t$ and for each $n\ge 1$ the moment $M_n(t)$ remains constant with $t$.   \footnote{do we want $\Omega_0$ simply connected here?}
%Moreover this system is integrable, in that the converse is true:  if $\{\Omega_t\}_{t\in (-\epsilon,\epsilon)}$ is a smooth increasing family of simply connected domains such that the complex moments are preserved then $\Omega_t$ is a solution to the Hele-Shaw flow (after a possible reparameterizing of $t$)
% \end{theorem}
\clearpage
 \begin{theorem}\label{thm:momentsheleshaw}\
   \begin{enumerate}
   \item  Let $\Omega_t$ be given by the Hele-Shaw flow.   Then the higher moments $M_k(t)$ for $k\ge 1$ remain constant with $t$.  

   \item  Conversely, any smooth increasing family of simply connected domains $\{\Omega_t\}$ for $t\in (-\epsilon,\epsilon)$ that contain the origin and whose higher moments $M_k$ are constant with respect to $t$ is a solution to the Hele-Shaw flow (after a possible reparameterising of $t$).
   \end{enumerate}
 \end{theorem}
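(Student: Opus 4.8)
The plan is to express both conditions --- solving the Hele-Shaw flow and having conserved higher moments --- in terms of the normal velocity $V_t$ of the boundary, and then to show they coincide. First I would recall the key computation of the time-derivative of a moment. Writing $\partial \Omega_t = \{x + f(x,t) n_x\}$ as in the Terminology section, one has for any function $g$ that is locally integrable against $\frac{1}{\kappa}\,dA$ the transport-type identity
\[
\frac{d}{dt}\int_{\Omega_t} g\,\frac{1}{\kappa}\,dA = \int_{\partial\Omega_t} g\,\frac{1}{\kappa}\, V_t\,ds,
\]
where $V_t = \frac{df_t}{dt}$ is the scalar normal velocity and $ds$ is arclength. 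Applying this to $g = z^k$ gives $M_k'(t) = \int_{\partial\Omega_t} z^k \frac{1}{\kappa} V_t\,ds$. So part (1) amounts to showing that when $V_t = -\kappa\,\partial_n p_t$ on $\partial\Omega_t$ (with $p_t$ the capacity/Green-type potential solving $\Delta p_t = -\delta_0$ and $p_t = 0$ on $\partial\Omega_t$), we get $\int_{\partial\Omega_t} z^k\,\partial_n p_t\,ds = 0$ for $k\ge 1$. This is exactly Richardson's observation and follows from Green's identity: since $z^k$ is harmonic on $\Omega_t$ for $k\ge 1$,
\[
\int_{\partial\Omega_t} z^k\, \partial_n p_t\, ds - \int_{\partial\Omega_t} p_t\, \partial_n z^k\, ds = \int_{\Omega_t} \left( z^k \Delta p_t - p_t \Delta z^k \right)\,dA = -z^k\big|_{z=0} = 0,
\]
and the second boundary integral vanishes because $p_t \equiv 0$ on $\partial\Omega_t$. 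This handles the $k\ge 1$ cases; for $k=0$ the same argument gives $M_0'(t) = \int_{\partial\Omega_t}\frac{1}{\kappa}V_t\,ds = \int_{\Omega_t}(-\Delta p_t)\,dA = 1$, i.e. $M_0(t)$ increases at constant (unit) rate --- which is why one normalizes $\operatorname{Area}(\Omega_t) = M_0(0) + t$ in the statement. Note also that the moments here are weighted by $\frac{1}{\kappa}$, which is precisely what makes the Green's-identity argument work cleanly with the $\kappa$-weighted Darcy law $V_t = -\kappa\,\partial_n p_t$; the weight $\frac 1\kappa$ cancels the $\kappa$ in the velocity on the boundary.

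For the converse, part (2), I would run the argument backwards but need to be more careful because we must \emph{recover} the law of motion, not just check it. Given a smooth increasing family of simply connected domains containing the origin with $M_k(t)$ constant for $k\ge 1$, define $p_t$ to be the unique function on $\Omega_t$ with $\Delta p_t = -\delta_0$ and $p_t|_{\partial\Omega_t} = 0$; this exists by simple connectivity (it is essentially the Green's function with pole at $0$, up to the standard normalization). Set $W_t := -\kappa\,\partial_n p_t$ on $\partial\Omega_t$; this is the velocity the Hele-Shaw flow would prescribe. We want to show $V_t = W_t$, i.e. that the actual normal velocity of the given family equals $W_t$. From the computation above, the hypothesis $M_k'(t) = 0$ for $k\ge 1$ says $\int_{\partial\Omega_t} z^k \frac 1\kappa V_t\,ds = 0$ for all $k\ge 1$, while Green's identity gives $\int_{\partial\Omega_t} z^k \frac 1\kappa W_t\,ds = 0$ for all $k\ge 1$ as well. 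Thus the measure $\mu_t := \frac 1\kappa (V_t - W_t)\,ds$ on $\partial\Omega_t$ annihilates all $z^k$, $k\ge 1$. To conclude $V_t = W_t$ one also needs the moments against $\bar z$, or equivalently against the full family $z^j \bar z^k$, which is where the extra hypothesis must come in. The standard fact (Richardson / Aharonov--Shapiro) is that for a simply connected domain the harmonic moments $\int z^k$ together with the complex conjugate relation --- real analyticity of the data or the Schwarz-function structure --- pin down the domain; here one should invoke the moment computation together with the real-analyticity machinery developed in the rest of the paper, or more elementarily: a real measure supported on a curve that is orthogonal to all holomorphic polynomials of positive degree and has total mass determined by $M_0'$ must, together with the reflection across the curve, force $V_t - W_t$ to extend holomorphically and vanish. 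Concretely, one shows $\frac 1\kappa(V_t - W_t)$ arises as the boundary value of a function holomorphic in $\Omega_t$ with no singularity (the pole contributions at $0$ of the two candidate Schwarz functions cancel), hence by the moment conditions that function is identically zero. Reparametrising $t$ so that $M_0(t) = M_0(0) + t$ then makes the normalization of $p_t$ match, and $\{\Omega_t\}$ solves the Hele-Shaw flow.

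The main obstacle is the converse direction: the identity $M_k'(t) = \int_{\partial\Omega_t} z^k \frac 1\kappa V_t\,ds$ only constrains $V_t$ \emph{modulo} the space of velocity fields orthogonal to all positive-degree holomorphic monomials, which is not obviously trivial on a general curve. Closing this gap requires the structural input that $V_t - W_t$, after multiplication by $\frac 1\kappa$, is the boundary value of something holomorphic on $\Omega_t$ --- essentially that both the flow and the Hele-Shaw law are governed by a Schwarz function holomorphic off the origin, so their difference is holomorphic everywhere --- at which point the vanishing of all its positive moments (and the matching of the areas after reparametrisation) forces it to be zero by the mean-value property. I expect the cleanest way to organize this is exactly the Schwarz-function / lifted-disc picture the paper is about to set up, so in the actual write-up one might defer the full converse to after that machinery is in place; but the moment computation via Green's identity is the self-contained heart of both directions.
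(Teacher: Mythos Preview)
Your treatment of part (1) is correct and is exactly the paper's argument: Richardson's computation via the transport identity and Green's formula, with the $\kappa$ in Darcy's law cancelling the weight $1/\kappa$ in the moments.

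For part (2) the paper does not actually give a proof; it simply attributes the statement to Gustafsson with a reference. Your outline therefore goes further than the paper does, and the obstacle you flag is real: knowing that the real measure $\mu_t=\tfrac{1}{\kappa}(V_t-W_t)\,ds$ on $\partial\Omega_t$ annihilates $z^k$ for $k\ge 1$ does not by itself force $\mu_t=0$. Your suggested fix via Schwarz functions is plausible but roundabout, and at this point in the paper that machinery is not yet available. The clean closure is in fact what the paper proves later as Proposition~\ref{uniquenessvectorfields}: two smooth normal fields on $\partial\Omega$ giving the same integrals $\int_{\partial\Omega} z^k V\rho\,ds$ for all $k\ge 0$ must coincide. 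The argument there writes $V\rho\,ds=g\,dz$, observes that the moment conditions make $g_1-g_2$ extend holomorphically to $\Omega$, pulls back by a Riemann map to the unit disc, and uses that a holomorphic $h$ with $wh(w)$ real on the unit circle vanishes identically. Once you reparametrise so that $M_0'(t)=1$, your $V_t$ and $W_t$ have matching $k=0$ moment as well, and that proposition gives $V_t=W_t$ directly---no Schwarz function needed. So your strategy is sound; just replace the Schwarz-function hand-wave with a forward reference to that uniqueness lemma (or reproduce its short proof in place).
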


The first statement is Richardson's calculation, namely that if the Hele-Shaw equation \eqref{classic3} is satisfied then

\begin{eqnarray*}
\frac{d}{dt} M_k(t) &=&   \frac{d}{dt} \int_{\Omega_t} z^k \frac{1}{\kappa} dA = \int_{\partial \Omega_t} z^k \frac{V_t}{\kappa} ds = -\int_{\partial \Omega_t} z^k \frac{\partial p_t}{\partial n} ds \\
&=& \int_{\Omega_t} \left(p_t \Delta(z^k) - z^k(\Delta p_t)\right) dA - \int_{\partial \Omega_t} p_t \frac{\partial z^k}{\partial n} ds
\end{eqnarray*}
which vanishes if $k>0$ and equals $1$ when $k=0$ as $p_t=0$ on $\partial \Omega_t$ and $\Delta(p_t) = -\delta_0$.      The second statement says in effect that this calculation can be reversed (and is, as far as we know, due to Gustafsson, \cite[p22]{Gustafsson} or \cite{Hedenmalm}).

\subsection{Schwarz functions}

%Recall that a Schwarz function $S$ of a domain $\Omega$ is a holomorphic or meromorphic function which is equal to $\bar{z}$ on $\partial \Omega.$    The traditional Schwarz function as studied by Davies was supposed to holomorphic in a neighbourhood of $\partial \Omega.$ A domain $\Omega$ has a (unique) Schwarz function if and only if its boundary is real analytic (see e.g. the book \cite{Shapiro} by Shapiro).  If instead the function $S$ is  meromorphic in $\Omega$ with a continuous extension to $\partial \Omega$ (and there $S(z)=\bar{z}$), then $S$ is called an interior Schwarz function. It is a fact that a domain has an interior Schwarz function if and only if it is a quadrature domain (see \cite{Aharanov}).
We next discuss how the moment characterisation of the Hele-Shaw flow can be recast in terms of the existence of Schwarz functions.   To describe this, fix a pair $(\Omega,\phi)$ where $\Omega$ is a domain in $\mathbb R^2=\mathbb C$, and $\phi$ is a smooth function defined in some neighbourhood of $\partial \Omega$ that is strictly subharmonic (i.e.\ such that $\Delta \phi$ is positive).

\begin{definition}\label{def:schwarz}
  We say that a function $S$ is a \emph{Schwarz function} of $(\Omega,\phi)$ if $S$ is a meromorphic function on $\Omega$ which extends continuously to $\partial \Omega$ and such that
\[S(z)=\frac{\partial \phi(z)}{\partial z} \quad{} \text{ on }\partial \Omega.\] 
\end{definition}

\begin{remark}
The model case usually considered is where $\phi = |z|^2$, so $S(z) = \overline{z}$ on $\partial \Omega$.  We remark that what we define above is sometimes referred to as an interior Schwarz function, since we are requiring that it be defined on all of $\Omega$.  Much of the work of Schwarz functions asks instead that $S$ be defined, and holomorphic, on some neighbourhood of $\partial \Omega$.   It turns out, in the model case $\phi(z) = |z|^2$, that such an $S$ exists in this restricted sense if and only if $\Omega$ has real analytic boundary (see e.g.\ the book \cite{Shapiro} by Shapiro), and extends to a meromorphic function on $\Omega$ if and only if $\Omega$ is a quadrature domain \cite{Aharanov}.
\end{remark}

The connection between Schwarz functions in the model case $\phi(z) = |z|^2$ and the Hele-Shaw flow is well known (e.g.\ \cite[3.7.1]{Gustafsson}).  The following generalises this to suit our needs:

\begin{proposition}\label{prop:schwarzheleshaw}
Let $\phi$ be a smooth and strictly subharmonic function defined in a domain $D.$ Suppose $\Omega_t,$ $t\in (a,b),$ is a smooth increasing family of Jordan domains containing $0$ such that for all $t\in (a,b),$ $\partial \Omega_t\subset D,$ and for each $t$ there exists a Schwarz function $S_t$ of the pair $(\Omega_t,\phi)$ that is holomorphic on $\Omega_t$ except for having a simple pole at $0$.  Then after some reparametrization $s=s(t)$, the $\Omega_s$ are a solution to the Hele-Shaw flow with varying permeability $\kappa=1/\Delta \phi.$  
\end{proposition}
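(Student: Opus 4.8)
The plan is to reduce the statement to the moment characterisation of the Hele-Shaw flow, Theorem~\ref{thm:momentsheleshaw}(2). A Jordan domain is simply connected, and by hypothesis $\{\Omega_t\}$ is a smooth increasing family containing the origin; so by that theorem it suffices to show that the higher complex moments
\[
M_k(t) \;=\; \int_{\Omega_t} z^k\,\frac{1}{\kappa}\,dA \;=\; \int_{\Omega_t} z^k\,\Delta\phi\,dA, \qquad k\ge 1,
\]
are constant in $t$, the required reparametrisation $s=s(t)$ then being the one furnished by Theorem~\ref{thm:momentsheleshaw}(2). Here we may assume $\phi$, and hence $\kappa=1/\Delta\phi$, is defined on a neighbourhood of $\overline{\Omega_t}$: this is harmless, since neither the Schwarz functions $S_t$ nor the Hele-Shaw flow with permeability $\kappa$ --- which by \eqref{classic3} only involves $\kappa$ along $\partial\Omega_t$ --- depends on $\phi$ away from the boundaries.

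The key step is a Green/Stokes identity converting the area integral $M_k(t)$ into a contour integral that sees the Schwarz function. Writing $dA=\tfrac{i}{2}\,dz\wedge d\bar z$ and $\Delta=4\,\partial_z\partial_{\bar z}$, and using that $z^k$ is holomorphic, one checks that
\[
z^k\,\Delta\phi\,dA \;=\; -2i\; d\!\left(z^k\,\frac{\partial\phi}{\partial z}\,dz\right),
\]
so Stokes' theorem on the smooth Jordan domain $\Omega_t$ gives
\[
M_k(t) \;=\; -2i\oint_{\partial\Omega_t} z^k\,\frac{\partial\phi}{\partial z}\,dz
\]
(the boundary positively oriented; the precise constant plays no role below). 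By the defining property of the Schwarz function, $\partial\phi/\partial z = S_t$ on $\partial\Omega_t$, and hence
\[
M_k(t) \;=\; -2i\oint_{\partial\Omega_t} z^k\,S_t(z)\,dz .
\]

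Now the hypothesis on $S_t$ finishes the argument. For $k\ge 1$ the function $z^k S_t(z)$ is holomorphic on $\Omega_t\setminus\{0\}$, and since $S_t$ has no singularity on $\Omega_t$ other than at worst a simple pole at $0$, the factor $z^k$ removes that pole; thus $z^k S_t(z)$ extends to a function holomorphic on all of $\Omega_t$ and continuous up to $\partial\Omega_t$. Cauchy's theorem then gives $\oint_{\partial\Omega_t} z^k S_t(z)\,dz = 0$, so $M_k(t)=0$ for every $k\ge 1$ and every $t\in(a,b)$. In particular the higher moments are constant, and Theorem~\ref{thm:momentsheleshaw}(2) applies, which is what we wanted.

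I do not expect a serious obstacle here: the content is essentially a translation. The two places that want a little care are the Green/Stokes identity above and the use of Cauchy's theorem up to the boundary --- both entirely standard for smooth Jordan domains --- together with the minor bookkeeping of extending $\phi$ and of the time reparametrisation. The real point, and the reason the hypothesis cannot be relaxed, is the observation that permitting $S_t$ only a single, simple pole located exactly at the injection point $0$ is precisely what forces every higher moment to vanish; a pole elsewhere, or one of higher order, would in general contribute non-trivial and $t$-dependent moments, and the family would then solve a more general inverse-potential-type flow rather than the Hele-Shaw flow.
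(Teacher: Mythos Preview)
Your proof is correct and follows essentially the same route as the paper: Stokes converts the moment integral to a contour integral, the Schwarz condition replaces $\partial_z\phi$ by $S_t$, and Cauchy's theorem (the simple pole at $0$ being killed by $z^k$) gives the vanishing, after which Theorem~\ref{thm:momentsheleshaw}(2) applies. The only difference is that the paper computes the \emph{difference} $\int_{\Omega_t\setminus\Omega_{t_0}} z^k\,\Delta\phi\,dA$ rather than each $M_k(t)$ separately, which sidesteps your extension of $\phi$ into the interior since the annulus $\Omega_t\setminus\Omega_{t_0}$ is swept out by the boundaries $\partial\Omega_{t'}\subset D$ and hence already lies in $D$.
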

\begin{proof}

Pick $t_0\in (a,b)$ and let $t\in(t_0,b)$.  Then for $k\geq 0$ we have,
\begin{eqnarray*}
\int_{\Omega_t\setminus \Omega_{t_0}}z^k\frac{dA}{\kappa}&=&\int_{\Omega_t\setminus \Omega_{t_0}}z^k\Delta \phi dA=-\frac{i}{2}\int_{\Omega_t\setminus \Omega_{t_0}}z^kd(\frac{\partial \phi}{\partial z}dz)\\
&=&-\frac{i}{2}\int_{\partial \Omega_t}z^k\frac{\partial \phi}{\partial z}dz+\frac{i}{2}\int_{\partial \Omega_{t_0}}z^k\frac{\partial \phi}{\partial z}dz\\
&=&-\frac{i}{2}\int_{\partial \Omega_t}z^kS_tdz+\frac{i}{2}\int_{\partial \Omega_{t_0}}z^kS_{t_0}dz\\
&=&-\frac{i}{2}\int_{\Omega_t}z^k\bar{\partial}S_tdz+\frac{i}{2}\int_{\Omega_{t_0}}z^k\bar{\partial} S_{t_0}dz=\\&=&z^k(0)\pi\textrm{Res}_0(S_{t}-S_{t_0}).
\end{eqnarray*}
Thus the complex moments are preserved, and so $\Omega_t$ is a classical solution to the Hele-Shaw flow \eqref{thm:momentsheleshaw} (perhaps after reparametrization).
\end{proof}

Thus we wish to prove the existence of Schwarz functions $S_t$,  which is made easier through the freedom of choice in the subharmonic function $\phi$.  Our next aim is to construct a suitable $\phi$ that solves this problem for $t=0$.    To guide the subsequent discussion we first recall a classical argument in the case where $\phi=|z|^2.$  Consider the Cauchy integral
\[f(z):=\frac{1}{2\pi i}\int_{\partial \Omega_0}\frac{\bar{\zeta}d\zeta}{\zeta-z}.\]
 Define $f_+$ as the restriction of $f$ to $\Omega_0$ and $f_-$ as the restriction of $f$ to the complement of $\overline{\Omega}_0.$ Clearly $f_+$ and $f_-$ are both holomorphic in their respective domain of definition. A careful analysis shows that both $f_+$ and $f_-$ extend smoothly to $\partial \Omega_0$ and the Sokhotski-Plemelj formula states that on the boundary
\begin{equation} \label{sokhotski}
f_+(z)=\bar{z}+f_-(z)
\end{equation} 
(see, for example,  the book \cite{Gakhov} by Gakhov).     Moreover it can be shown that the boundary of $\Omega_0$ is real analytic if and only if both $f_+$ and $f_-$ extend holomorphically to a neighbourhood of $\partial \Omega_0$, and in this case $S:=f_+-f_-$ is holomorphic on this neighbourhood and $S(z) = \overline{z}$ on $\partial \Omega$.\medskip

We state first a simple version of what we shall prove:

\begin{proposition}\label{prop:existenceschwarz1}
  Let $\Omega_0$ be a smooth Jordan domain containing $0$.   Then there exists a smooth strictly subharmonic function $\phi$ defined on a neighbourhood $U$ of the complement of $\Omega_0$ such that $\Delta \phi =1$ on $U-\Omega_0$ and so that $(\Omega_0,\phi)$ admits a Schwarz function $S_0$ that is holomorphic except for having a simple pole at $0$.
\end{proposition}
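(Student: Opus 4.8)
The plan is to adapt the classical Cauchy-transform argument recalled above, with two modifications: a rescaling so that $\Delta\phi\equiv 1$ rather than $\Delta\phi\equiv 4$, and the insertion of a pole term $c/z$ whose residue, it will turn out, is forced on us. I would fix the reference potential $\psi_0(z):=|z|^2/4$, so that $\Delta\psi_0=1$ and $\partial\psi_0/\partial z=\bar z/4$, and recall the Cauchy-transform data of $\Omega_0$: with $f$, $f_+$, $f_-$ as above, $f_+$ is holomorphic on $\Omega_0$, $f_-$ is holomorphic on $\mathbb C\setminus\overline{\Omega_0}$ with $f_-(\infty)=0$, both extend smoothly to $\partial\Omega_0$, and $f_+=\bar z+f_-$ there by \eqref{sokhotski}. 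Since $0\in\Omega_0$, the function $c/z$ is holomorphic on $\mathbb C\setminus\overline{\Omega_0}$ for every constant $c$, so I set
\[
S_0:=\tfrac14 f_+ + \frac{c}{z},\qquad F_-:=\tfrac14 f_- + \frac{c}{z},
\]
so that $S_0$ is holomorphic on $\Omega_0$ away from a simple pole of residue $c$ at the origin, extends smoothly to $\partial\Omega_0$, and satisfies $S_0=\bar z/4+F_-$ on $\partial\Omega_0$.

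Since the requirement $\Delta\phi=1$ off $\Omega_0$ forces $\phi-\psi_0$ to be harmonic there, the problem reduces to finding a harmonic $u$ on $\mathbb C\setminus\overline{\Omega_0}$, smooth up to $\partial\Omega_0$, with $\partial u/\partial z=F_-$; then $\phi_{\mathrm{ext}}:=\psi_0+u$ will satisfy $\Delta\phi_{\mathrm{ext}}=1$ and $\partial\phi_{\mathrm{ext}}/\partial z=\bar z/4+F_-=S_0$ on $\partial\Omega_0$. Writing $u=2\operatorname{Re}G=G+\overline G$ for a holomorphic primitive $G$ of $F_-$ (so that $\partial u/\partial z=G'=F_-$), this is exactly the question of whether $G$ can be chosen single-valued on the annulus-type region $\mathbb C\setminus\overline{\Omega_0}$. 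As $H_1(\mathbb C\setminus\overline{\Omega_0})\cong\mathbb Z$, the only obstruction is the period $\frac1{2\pi i}\oint_{|z|=R}F_-\,dz=\tfrac14 a_1+c$, where $a_1=\lim_{z\to\infty}zf_-(z)=-\frac1{2\pi i}\int_{\partial\Omega_0}\bar\zeta\,d\zeta=-\operatorname{Area}(\Omega_0)/\pi$. Hence $c$ is pinned down to the single value $c=\operatorname{Area}(\Omega_0)/(4\pi)$, which is nonzero; in particular this shows that the pole in $S_0$ is not a convenience but is forced once we insist that $\Delta\phi\equiv1$ off $\Omega_0$. With this $c$, the primitive $G$ exists and is smooth up to $\partial\Omega_0$, and so are $u$ and $\phi_{\mathrm{ext}}$.

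To finish, I would extend $\phi_{\mathrm{ext}}$ from the closed set $\mathbb C\setminus\Omega_0$ (a smooth manifold with boundary $\partial\Omega_0$) to a smooth function $\phi$ on a neighbourhood of it in $\mathbb C$ by the Seeley/Whitney extension theorem, so that $\phi$ and all its derivatives agree with those of $\phi_{\mathrm{ext}}$ on $\mathbb C\setminus\Omega_0$, in particular on $\partial\Omega_0$. Then $\Delta\phi$ is continuous and identically equal to $1$ on the closed set $\mathbb C\setminus\Omega_0$, so the open set $U:=\{\,\Delta\phi>0\,\}$ contains $\mathbb C\setminus\Omega_0$; on $U$ the function $\phi$ is strictly subharmonic, $\Delta\phi=1$ on $U-\Omega_0$ (which is contained in $\mathbb C\setminus\Omega_0$), and $\partial\phi/\partial z=\partial\phi_{\mathrm{ext}}/\partial z=S_0$ on $\partial\Omega_0$. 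Thus $S_0$ is a Schwarz function of $(\Omega_0,\phi)$ in the sense of Definition~\ref{def:schwarz}, holomorphic except for a simple pole at $0$.

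The routine ingredients are the smoothness up to the boundary of $f_\pm$ and of the primitive $G$ (standard for smooth Jordan curves), together with the Seeley extension. The one genuinely substantive step, and the place where the hypotheses are used, is the period computation fixing $c$: it needs $0\in\Omega_0$, so that $c/z$ is holomorphic off $\Omega_0$, and $\operatorname{Area}(\Omega_0)>0$, so that the forced residue is nonzero and $S_0$ has a genuine simple pole.
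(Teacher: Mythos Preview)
Your argument is correct and follows essentially the same route as the paper: take the Cauchy transform, use Sokhotski--Plemelj, correct the base potential $\psi_0$ by the real part of a holomorphic primitive of (the adjusted) $f_-$, and absorb the residual $1/z$ term as the pole of $S_0$. The only cosmetic difference is that the paper first strips off the leading $a/z$ from $f_-$ and then integrates, whereas you add an undetermined $c/z$ and let the period condition pin it down; your additional explicit computation $c=\operatorname{Area}(\Omega_0)/(4\pi)\neq 0$, showing the simple pole is forced, is a nice bonus not spelled out in the paper.
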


For the problem of varying permeability we need something a little stronger:

\begin{proposition}\label{prop:existenceschwarz2}
  Let $\Omega_0$ be a smooth Jordan domain containing $0$ and $\kappa$ be a smooth positive function defined on a neighbourhood of $\partial \Omega_0$.   Then there exists a smooth function $\kappa'$ defined in on a neighbourhood $U$ of $\partial \Omega_0$ such that $\kappa'=\kappa$ on $U-\Omega_0$ and a smooth strictly subharmonic function $\phi$ defined on $U$ with $\Delta \phi = 1/\kappa'$ such that $(\Omega_0,\phi)$ admits a Schwarz function $S_0$ that is holomorphic except for having a simple pole at $0$.

Moreover if $\kappa$ is real analytic, and $\Omega_0$ has real analytic boundary then we can take $\kappa'=\kappa$. 
\end{proposition}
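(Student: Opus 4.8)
The plan is to reduce the existence of the Schwarz function to a $\bar\partial$-problem on the complement of $\Omega_0$, following the classical Cauchy-integral argument in the model case $\phi = |z|^2$ but now carrying along the extra datum of the prescribed permeability near $\partial\Omega_0$. First I would choose a convenient smooth strictly subharmonic function $\phi$ on a neighbourhood $U$ of $\partial\Omega_0$ realising the permeability: outside $\Omega_0$ we need $\Delta\phi = 1/\kappa$, so one sets up a local potential for $1/\kappa$ there by solving a Poisson equation on a small collar of $\partial\Omega_0$ lying in the complement, and then extends $1/\kappa$ to a global positive smooth function $1/\kappa'$ on all of $U$ (which is possible by a partition of unity since positivity is an open convex condition), finally taking $\phi$ to be a solution of $\Delta\phi = 1/\kappa'$ on $U$ (shrinking $U$ if needed). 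The point is that $\partial\phi/\partial z$ is then a well-defined smooth function on $U$, and $\bar\partial(\partial\phi/\partial z) = \tfrac14\Delta\phi\, d\bar z = \tfrac{1}{4\kappa'}d\bar z$, which equals $\tfrac{1}{4\kappa}d\bar z$ on the part of $U$ outside $\Omega_0$; in particular in the real-analytic case we may keep $\kappa'=\kappa$ throughout.

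The next step is to build the Schwarz function on $\Omega_0$. Form the Cauchy-type integral
\[
f(z) := \frac{1}{2\pi i}\int_{\partial\Omega_0} \frac{(\partial\phi/\partial\zeta)(\zeta)\, d\zeta}{\zeta-z},
\]
which is holomorphic off $\partial\Omega_0$; write $f_+$ for its restriction to $\Omega_0$ and $f_-$ for its restriction to the complement of $\overline{\Omega_0}$. As in the classical case, since $\partial\Omega_0$ is smooth the functions $f_\pm$ extend smoothly up to the boundary and the Sokhotski--Plemelj formula gives $f_+(z) - f_-(z) = (\partial\phi/\partial z)(z)$ on $\partial\Omega_0$. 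Now I would correct $f_-$ using the prescribed behaviour of $\phi$ near the boundary: on the collar $U \setminus \Omega_0$ the function $h := (\partial\phi/\partial z) - f_-$ is smooth and satisfies $\bar\partial h = \tfrac{1}{4\kappa}d\bar z$ there, so solving a local $\bar\partial$-equation on a slightly smaller collar produces a smooth $g$ with $\bar\partial g = \tfrac{1}{4\kappa'}d\bar z$ agreeing to the needed order, and then $f_- + g$ is the boundary value of something that glues correctly. Concretely, the idea is to arrange matters so that $S_0 := f_+$ extended appropriately, or rather $S_0$ defined as $\partial\phi/\partial z$ near $\partial\Omega_0$ and continued into $\Omega_0$ as the solution of $\bar\partial S_0 = \tfrac{1}{4\kappa'}d\bar z$, is meromorphic exactly when that $\bar\partial$-datum is compactly supported away from a point — which is why we only need $\Delta\phi = 1/\kappa$ \emph{outside} $\Omega_0$ and are free to deform it inside. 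Finally I insert the simple pole at $0$: subtracting $c/z$ for a suitable constant $c$ from the candidate Schwarz function shifts its residue, and choosing $c$ so that the remaining $\bar\partial$-data integrate to zero (equivalently, so that the $k=0$ moment identity in Proposition \ref{prop:schwarzheleshaw} is consistent) yields a genuine meromorphic $S_0$ on $\Omega_0$ with exactly a simple pole at the origin and $S_0 = \partial\phi/\partial z$ on $\partial\Omega_0$.

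The main obstacle is the gluing/regularity step: one must check that the locally constructed interior continuation of $\partial\phi/\partial z$ across $\partial\Omega_0$ is honestly meromorphic (not merely smooth with controlled $\bar\partial$), and that the modification $\kappa \rightsquigarrow \kappa'$ inside $\Omega_0$ can be chosen to keep $\Delta\phi$ positive everywhere on $U$ while making $\partial\phi/\partial z - (\text{holomorphic} + c/z)$ vanish to infinite order — or better, identically — in a neighbourhood of $\partial\Omega_0$. This is where smoothness of $\partial\Omega_0$ (rather than real-analyticity) is exactly what is needed: the Cauchy integral $f_\pm$ are only smooth, not holomorphic, across $\partial\Omega_0$, so the freedom to alter $\phi$ in the interior is essential, whereas in the real-analytic case $f_\pm$ already extend holomorphically and no interior modification is required, giving $\kappa'=\kappa$. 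I would handle the positivity constraint by noting that $\kappa'$ need only differ from $\kappa$ on a small interior neighbourhood and can be taken $C^0$-close to a fixed positive extension, so $\Delta\phi$ stays bounded away from zero after shrinking $U$.
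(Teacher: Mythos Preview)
Your proposal has the right starting ingredients (the Cauchy integral of $\partial\phi/\partial\zeta$ and the Sokhotski--Plemelj jump formula) but the logic is inverted at the decisive step, and as written it does not produce a meromorphic Schwarz function.

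The problem is that you fix $\kappa'$, and hence $\phi$, \emph{before} forming the Cauchy integral. Once $\phi$ is chosen, the jump relation reads $f_+ = \partial\phi/\partial z + f_-$ on $\partial\Omega_0$, so any meromorphic $S_0$ on $\Omega_0$ with boundary values $\partial\phi/\partial z$ would have to equal $f_+ - f_-$. But $f_-$ is holomorphic only on the \emph{exterior} and, for a merely smooth boundary, has no holomorphic continuation into $\Omega_0$; your attempt to compensate by solving $\bar\partial S_0 = \tfrac{1}{4\kappa'}\,d\bar z$ inside $\Omega_0$ cannot help, since a function satisfying that equation with $\kappa'>0$ is never meromorphic (meromorphic means $\bar\partial S_0$ is a sum of point masses, not a smooth positive density). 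So the ``gluing/$\bar\partial$'' paragraph does not close, and the freedom you invoke in $\kappa'$ is illusory once $\phi$ has been fixed.

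The paper's argument runs in the opposite direction: start with \emph{any} potential $\phi_0$ with $\Delta\phi_0=1/\kappa$, form the Cauchy integral of $\partial\phi_0/\partial z$, and then \emph{modify $\phi_0$} on the exterior by adding the harmonic function $h=2\,\mathrm{Re}\,F$, where $F$ is a $\partial/\partial z$-primitive of $\tilde f := f_- - a/z$ (the $a/z$ being the leading term of $f_-$ at infinity). This gives a new $\phi=\phi_0+h$ with $\Delta\phi=1/\kappa$ still holding on the exterior (since $h$ is harmonic), and now $\partial\phi/\partial z=\partial\phi_0/\partial z+f_- - a/z$, so on $\partial\Omega_0$ one has $\partial\phi/\partial z=f_+ - a/z$. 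Thus $S_0:=f_+ - a/z$ is automatically holomorphic on $\Omega_0$ with a simple pole at $0$ and the correct boundary values; the residue $a$ is not a free parameter to be tuned but is forced by the expansion of $f_-$. The modified $\kappa'$ is then \emph{defined} a posteriori as $1/\Delta\phi$ on the smooth extension of $\phi$ across $\partial\Omega_0$. In the real-analytic case $f_-$ extends holomorphically across $\partial\Omega_0$, so $h$ extends harmonically and $\kappa'=\kappa$ on both sides. The missing idea in your sketch is precisely this harmonic correction of the potential on the exterior, which converts the obstruction $f_-$ into the boundary datum itself rather than trying to push $f_-$ (or a $\bar\partial$-solution) into the interior.
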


\begin{proof}
We need only prove Proposition \ref{prop:existenceschwarz2}.  We may as well assume $\kappa$ is defined on all of $\mathbb C$ and is smooth and positive there.   Pick some smooth function $\phi_0$ on $\mathbb C$ such that $\Delta \phi_0=1/\kappa$.

Consider the Cauchy integral \[f(z):=\frac{1}{2\pi i}\int_{\partial \Omega_0}\frac{\frac{\partial \phi_0}{\partial z}(\zeta)}{\zeta-z}d\zeta.\] As above we write $f_+$ and $f_-$ for the restriction of $f$ to $\Omega_0$ and to the complement of $\overline{\Omega}_0$ respectively.  Then the Sokhotski-Plemelj formula says that on $\partial \Omega_0,$
\begin{equation} \label{sokhotski2}
f_+(z)= \frac{\partial \phi_0}{\partial z} +f_-(z).
\end{equation} 

Observe next that $f_-$ tends to zero as $z$ tends to infinity. Thus for some constant $a$ we have that 
\[f_-(z)=\frac{a}{z}+\tilde{f}(z),\]
where $\tilde{f}$ vanishes to second order at infinity. Such a holomorphic function possesses a $\partial/\partial z$-primitive $F$ on $\mathbb C- \overline{\Omega_0}$.  Letting $h=2Re(F)$ we can rewrite equation (\ref{sokhotski2}) as 
\begin{equation}
f_+(z)-\frac{a}{z}=\frac{\partial}{\partial z}(\phi_0+h) \quad \text{ on } \partial \Omega_0.
\end{equation} 
Now set  \[\phi=\phi_0+h\quad\text{ on } \mathbb C- \overline{\Omega}_0,\]
and observe that since $h$ is the real part of a holomorphic function, $\Delta \phi = \Delta \phi_0 = 1/\kappa$.  Moreover since $f_-$ had a smooth extension to $\partial \Omega_0$ it follows that $\phi$ extends smoothly to a neighbourhood of the complement of $\Omega_0$ and we can define $\kappa'=1/(\Delta \phi)$ which will be positive near $\partial \Omega_0$.     Finally  $S_0:=f_+(z)-\frac{a}{z}$ is a Schwarz function of $(\Omega_0,\phi),$ which is holomorphic except for a simple pole at $0.$  

If $\kappa$ is real analytic and $\partial \Omega_0$ has real analytic boundary, then $f_-$ extends to a holomorphic function over a neighbourhood $U$ of $\partial \Omega_0$ \cite[1.6.4,2.5.1]{Isakov}, and $h$ similarly extends to a harmonic function over the boundary.    Thus defining $\phi$ over this neighbourhood as above we have $\Delta \phi = 1/\kappa$ over this neighbourhood as well, and so $\kappa'= \kappa$.
\end{proof}

\begin{remark}
  From the above proof one sees that in fact the second statement holds under the condition that $f_-$ extends analytically over $\partial \Omega$, the arguments below show that in this case there is a backwards solution to the Hele-Shaw flow.
\end{remark}

\begin{remark}
In what follows we will use in an essential way that $\phi$ is smooth on a neighbourhood of $\partial \Omega_0$.   But one should note that the Sokhotski-Plemelj formula holds for much more general domains (e.g. Lipschitz domains) and one still gets a Schwarz function of $(\Omega_0,\phi),$ in this case and $\phi$ will at least have a $C^{1,\alpha}$ extension.   
\end{remark}

\subsection{Holomorphic Discs}
We are now ready to phrase the Hele-Shaw problem in terms of holomorphic discs. Let $\phi$ be the subharmonic function defined on the neighbourhood $U$ of $\partial \Omega_0$ from Proposition \ref{prop:existenceschwarz2}, and $S_0$ be the Schwarz function for $(\Omega_0,\phi)$.  Define

\[\Lambda := \{ (z,\frac{\partial \phi}{\partial z}) : z\in U\} \subset \mathbb C\times \mathbb C.\]
Since this is the graph of a real strictly subharmonic function, it enjoys various properties that will be discussed in the next section.  We shall consider a meromorphic function on an open set in $\mathbb C$ as a function to the Riemann sphere $\mathbb P^1  = \mathbb C \cup \{\infty\}$ in the standard way, giving it the value $\infty$ at its poles. \medskip
\begin{definition}
Suppose $(\Omega,\phi)$ admits a Schwarz function $S$ holomorphic except for having a simple pole at $0$.  Define
\begin{equation}
\Sigma = \Sigma_{\Omega} :=\{(z,S(z))\in \mathbb C\times \mathbb P^1: z\in \Omega\}. \label{eq:definitionsigmat}
\end{equation}
\end{definition}

So $\Sigma_\Omega$ is a holomorphic disc in $\mathbb C\times \mathbb P^1$.  Moreover,
\begin{enumerate}
\item If $\pi_1$ denotes the projection to the first factor then $\pi_1\colon \Sigma\to \pi_1(\Sigma)$ is an isomorphism,
\item The boundary of $\Sigma$ lies in $\Lambda$,
\item The closure of $\Sigma$ intersects $\{0\}\times \mathbb P^1$ precisely in the point $(0,\infty)$.
\end{enumerate}

\begin{theorem}\label{thm:holomorphicdiscs}
Suppose that $\Sigma_t$ is a smooth family of holomorphic discs in $\mathbb C\times \mathbb P^1$ that satisfy the three conditions above and such that $\Omega_t:= \pi_1(\Sigma_t)$ is an increasing family of smooth Jordan domains.      Then $\Omega_t$ is a solution to the Hele-Shaw flow with permeability $\kappa' := 1/(\Delta \phi)$.  Moreover any smooth increasing family $\Omega_t$ of smooth Jordan domains that solves the Hele-Shaw flow with permeability $\kappa'$ arises in this way.

\end{theorem}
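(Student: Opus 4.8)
The plan is to prove the two directions of Theorem \ref{thm:holomorphicdiscs} separately, reducing each to Proposition \ref{prop:schwarzheleshaw} and to the construction already carried out for $\Sigma_\Omega$ in the definition preceding the theorem.

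For the first direction, suppose $\Sigma_t$ is a smooth family of holomorphic discs satisfying conditions (1)--(3) with $\Omega_t = \pi_1(\Sigma_t)$ an increasing family of smooth Jordan domains. By condition (1), $\pi_1 \colon \Sigma_t \to \Omega_t$ is an isomorphism, so I can write $\Sigma_t$ as the graph of a function $S_t \colon \Omega_t \to \mathbb P^1$; since $\Sigma_t$ is a holomorphic disc and $\pi_1$ is biholomorphic, $S_t$ is meromorphic on $\Omega_t$. Condition (3) forces the only pole of $S_t$ to lie over $z = 0$ and to be simple (a higher-order pole, or the disc being tangent to $\{0\}\times\mathbb P^1$ to higher order, would make the closure meet $\{0\}\times\mathbb P^1$ with higher multiplicity — I should phrase this as: the proper transform of $\Sigma_t$ in the blow-up of $\mathbb C\times\mathbb P^1$ at $(0,\infty)$ meets the exceptional divisor in one point transversally, equivalently $S_t$ has a simple pole). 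Condition (2) says the boundary of $\Sigma_t$, i.e.\ $\{(z, S_t(z)) : z \in \partial\Omega_t\}$, lies in $\Lambda = \{(z, \partial\phi/\partial z)\}$, which is exactly the statement $S_t(z) = \partial\phi(z)/\partial z$ on $\partial\Omega_t$. So $S_t$ is a Schwarz function of $(\Omega_t,\phi)$ holomorphic except for a simple pole at $0$, and Proposition \ref{prop:schwarzheleshaw} applies to conclude $\Omega_t$ solves the Hele-Shaw flow with permeability $\kappa' = 1/\Delta\phi$, after reparametrization (which I should note is harmless: a reparametrization of an increasing family is still such).

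For the converse, let $\Omega_t$ be a smooth increasing family of smooth Jordan domains solving the Hele-Shaw flow with permeability $\kappa'$. I need to produce Schwarz functions $S_t$ of $(\Omega_t,\phi)$, holomorphic except for a simple pole at $0$, depending smoothly on $t$; then $\Sigma_t := \{(z,S_t(z)) : z \in \Omega_t\}$ is the required family, and conditions (1)--(3) hold by the three bullet points already established for $\Sigma_\Omega$. The construction is the Cauchy-integral argument from the proof of Proposition \ref{prop:existenceschwarz2} applied to each $\Omega_t$ in place of $\Omega_0$: set $f_t(z) = \frac{1}{2\pi i}\int_{\partial\Omega_t} \frac{(\partial\phi/\partial z)(\zeta)}{\zeta - z}\,d\zeta$, split into $f_{t,+}$ and $f_{t,-}$, extract the $a_t/z$ term at infinity, integrate the remainder to a primitive $F_t$, and check that the correction $h_t = 2\operatorname{Re}(F_t)$ is already absorbed — but here one must be careful, because $\phi$ is fixed (it is the function from Proposition \ref{prop:existenceschwarz2}, defined near $\partial\Omega_0$), whereas in the converse the boundary moves. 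The cleanest route is to invoke Proposition \ref{prop:schwarzheleshaw}'s proof in reverse via the moment characterisation: since $\Omega_t$ solves Hele-Shaw, Theorem \ref{thm:momentsheleshaw} gives that the complex moments $\int_{\Omega_t} z^k \Delta\phi\, dA$ are constant in $t$ for $k \ge 1$, and the computation in the proof of Proposition \ref{prop:schwarzheleshaw} (run backwards) shows that $\int_{\partial\Omega_t} z^k (\partial\phi/\partial z)\,dz$ differs from $\int_{\partial\Omega_0} z^k (\partial\phi/\partial z)\,dz$ by a term accounted for by a simple pole at $0$; concretely, the function $S_t := f_{t,+}(z) - a_t/z$ defined as above is holomorphic on $\Omega_t$ away from a simple pole at $0$ and equals $\partial\phi/\partial z$ on $\partial\Omega_t$ by Sokhotski--Plemelj, for each individual $t$, with no compatibility between different $t$ required. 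Smoothness in $t$ follows from smoothness of the family $\partial\Omega_t$ and standard regularity of the Cauchy integral in its contour.

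The main obstacle I anticipate is the bookkeeping at the pole: making precise that conditions (3) and ``simple pole at $0$'' correspond exactly, and that the constant $a_t$ in $S_t = f_{t,+} - a_t/z$ is the residue, so that $\Sigma_t$'s closure meets $\{0\}\times\mathbb P^1$ in the single point $(0,\infty)$ and nowhere else — in particular that $S_t$ has no other poles, which follows from $f_{t,+}$ being holomorphic on all of $\Omega_t$. A secondary subtlety is that in the converse direction the relevant neighbourhood $U$ on which $\phi$ is defined must contain $\partial\Omega_t$ for all small $t$; this is fine for $\epsilon$ small since $\partial\Omega_t \to \partial\Omega_0$ and $U$ is open, but should be stated. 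I do not expect the smoothness-in-$t$ claims to be hard, as they are routine consequences of smooth dependence of the Cauchy transform on the boundary contour, exactly as in the cited references on Sokhotski--Plemelj.
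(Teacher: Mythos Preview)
Your first direction is the same as the paper's: write $\Sigma_t$ as the graph of $S_t$, read off the Schwarz-function conditions from (1)--(3), and invoke Proposition~\ref{prop:schwarzheleshaw}.

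For the converse the paper takes a different route. It does not rerun the Cauchy-integral construction; instead it uses the pressure directly: set $u_t(z)=\int_0^t p(z,\tau)\,d\tau$ and define
\[
S_t=\frac{\partial\phi}{\partial z}-\frac{\partial u_t}{\partial z}-\chi_{\Omega_0}\Bigl(\frac{\partial\phi}{\partial z}-S_0\Bigr),
\]
then checks this is the desired Schwarz function using the distributional identity $\Delta u_t=\kappa^{-1}(\chi_{\Omega_t}-\chi_{\Omega_0})-t\delta_0$. This is closer to the weak (obstacle-problem) formulation and gives $S_t$ explicitly.

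Your Cauchy-integral route is also viable, but one link is not stated. Sokhotski--Plemelj only gives $f_{t,+}-f_{t,-}=\partial\phi/\partial z$ on $\partial\Omega_t$, so for $S_t:=f_{t,+}-a_t/z$ to equal $\partial\phi/\partial z$ there you need $f_{t,-}\equiv a_t/z$ on the whole exterior, not just $f_{t,-}=a_t/z+O(z^{-2})$ at infinity. Moment preservation shows that the Laurent coefficients of $f_{t,-}$ at infinity agree with those of $f_{0,-}$ beyond the $z^{-1}$ term; to finish you must also observe that $f_{0,-}$ is \emph{exactly} $a_0/z$. This is true precisely because $\phi$ was constructed in Proposition~\ref{prop:existenceschwarz2} so that $\partial\phi/\partial z=S_0$ on $\partial\Omega_0$, hence the exterior Cauchy integral of $\partial\phi/\partial z$ along $\partial\Omega_0$ picks up only the residue of $S_0$ at $0$. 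Once this is said, your argument goes through; it has the merit of making the role of moment conservation explicit, whereas the paper's formula makes the dependence on the pressure explicit. Your remark that ``no compatibility between different $t$ is required'' is therefore misleading: the step $f_{t,-}=a_t/z$ genuinely uses the Hele-Shaw hypothesis linking $\Omega_t$ to $\Omega_0$.
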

\begin{proof}
  The first statement just summarises what we have said thus far. Given $\Sigma_t,$ it can be realised as the graph of a Schwarz function $S_t$ for $\Omega_t:=\pi_1(\Sigma_t).$ Condition (3) says that $S_t$ is holomorphic except for having a simple pole at $0$.  Thus the result we want follows from Proposition \ref{prop:schwarzheleshaw}.

For the second statement, suppose that $\Omega_t$ is a smooth increasing family of smooth Jordan domains that satisfy the Hele-Shaw flow.  Let $p_t(z) = p(z,t)$ be the pressure which is required to satisfies $\Delta p_t = -\delta_0$ on $\Omega_t$ and $p_t = 0$ on $\partial \Omega_t$.  Now define

\[ u_t(z) = u(z,t) = \int_{0}^t p(z,\tau) d\tau\]
and set
\[ S_t = \frac{\partial \phi}{\partial z} - \frac{\partial u_t}{\partial z} - \chi_{\Omega_0}  \left( \frac{\partial \phi}{\partial z} - S_0\right),\]
where $\chi_{\Omega_0}$ denotes the characteristic function of $\Omega_0$.   Then $$\Delta u = \kappa^{-1}(\chi_{\Omega_t}-\chi_{\Omega_0}) - t\delta_0$$ in the sense of distributions, (see \cite[3.6]{Gustafsson} where this is considered for $\kappa\equiv 1$, or Remark \ref{rmk:weak} below) and one easily checks that $S_t$ is a Schwarz function for $\Omega_t$ which is holomorphic except for a simple pole at $0$ .  Thus the $\Omega_t$ are induced from the corresponding family $\Sigma_{\Omega_t}$ as required.
\end{proof}

In the following section we show that the moduli space of such holomorphic discs has real dimension 1, and thus our original disc $\Sigma_0$ deforms in a smooth one dimensional family $\Sigma_t$.    Observe that condition (1) and the property that $\pi_1(\Sigma_t)$ is a smooth Jordan domain are open conditions, and thus since they hold for $\Sigma_0$ they will hold for $\Sigma_t$ for small $t$ as well.

\begin{remark}
  In much the same way one can interpret the Hele-Shaw flow with multiple inputs, in which case condition (3) above must be suitably modified.  We leave the details to the reader.
\end{remark}

\begin{remark}\label{rmk:weak}
 To understand the long-time behaviour of the Hele-Shaw flow, and also for more general starting domains, one has a weak formulation of the problem. Indeed, following Elliott-Janovsky \cite{Elliott}, Gustafsson \cite{Gustafsson2} and Sakai \cite{Sakai}, \cite{Sakai4} it is possible to cast a weak formulation of the Hele-Shaw flow as an obstacle problem.

For simplicity assume here that $\kappa\equiv 1$.   Let $f$ be a distribution on some domain $D.$ A function $u$ on $D$ is said to solve the corresponding obstacle problem if
\begin{equation} \label{obstacle}
-\Delta u-f\geq 0, \qquad{} u\geq 0, \qquad{} (-\Delta u-f)u=0.
\end{equation} 
The free boundary $\Gamma(u)$ of the obstacle problem is the boundary of the coincidence $\Lambda(u):=\{x: u(x)=0\}.$ If the domain $D$ has a boundary one also specifies a non-negative function $g$ on $\partial D$ and demand that the solution $u$ should, in addition to satisfying (\ref{obstacle}), be equal to $g$ on the boundary.

Now  let $D=\mathbb{R}^2$ and for any $t>0$ set 
\[f_t:=\chi_{\Omega_0}-1+t\delta_0.\] By general potential theory  the corresponding obstacle problem has an unique lower semicontinuous solution $u_t$ and one calls the family \[\Omega_t:=\{z: u_t(z)>0\}\] the weak solution to the Hele-Shaw flow with initial domain $\Omega_0.$   It is know that a smooth solution, if it exists, will coincide with the weak solution, and if the weak solution is regular enough, it is the smooth solution (see e.g. \cite{Gustafsson}).  Moreover there does exists a weak solution that is defined for all time, but in general the $\Omega_t$ will eventually be non-smooth or cease to be simply connected.

Now suppose  $S_0$ is a Schwarz function of $(\Omega_0,\phi)$ and suppose that $\Delta \phi=1$ on the complement of $\Omega_0.$ We have seen that such a function exists at least when $\Omega_0$ is  Lipschitz. Let $\Omega_t$ be the weak solution to the Hele-Shaw flow. We observe that for any $t\in[0,\infty),$ there exists a Schwarz function for $(\Omega_t,\phi),$ holomorphic except for a simple pole at $0.$ To see this, let $u_t$ be solution to the above obstacle problem described. Then one simply notes that the function 
\[S_t:=(\frac{\partial \phi}{\partial z}-\frac{\partial u_t}{\partial z})-\chi_{\Omega_0}(\frac{\partial \phi}{\partial z}-S_0)\]
 is the claimed Schwarz function.

Hence using the same lifting we thus see that there is a natural deformation of our initial holomorphic disc $\Sigma_0$ that exists for \emph{all} positive time, but whose members need not be smooth and may have more exotic topology.  However the possibilities are constrained, as there are strong regularity results due to Sakai \cite{Sakai2,Sakai3} for the boundaries $\partial \Omega_t$ of the domains $\Omega_t$ belonging to a weak Hele-Shaw flow which says that if $\overline{\Omega}_0\subseteq \Omega_t$ (which, for example is  the case if $\partial \Omega_0$ is $C^1)$ then $\partial \Omega_t$ is real analytic except for having at most a finite number of cusps or double points.   
\end{remark}

\section[Empty initial condition]{Short time existence of the Hele Shaw flow with empty initial condition}

We are interested in smooth solutions to the Hele-Shaw flow with varying permeability $\kappa$ and empty initial condition.  That is, we wish to find a smooth solution $\Omega_t$ such that $\text{Area}(\Omega_t)=t$ for $t\in (0,\epsilon)$ for some $\epsilon>0$.     Observe that this problem is homogeneous in $\kappa$, by which we mean that the solution is unchanged if $\kappa$ is replaced by $c\kappa$ for some $c\in \mathbb R_{>0}$ (after a reparameterisation of the time variable).  This is clear from the defining equations of the Hele-Shaw flow, and can also be seen from the characterization in terms of the complex moments $M_k(t)$ which all scaled by a factor of $c^{-1}$ when $\kappa$ is replaced by $c\kappa$.

Our proof is based on a connection between the Hele-Shaw flow and a certain complex Homogeneous Monge-Amp\`ere Equation (HMAE).  We recall that our result extends a theorem of Hedenmalm-Shimorin \cite{Hedenmalm} who prove short-time existence under the assumption $\kappa$ is real analytic and $\log \kappa$ is strictly subharmonic.  

\subsection{The Homogeneous Monge-Amp\`ere Equation}

The complex Homogeneous Monge-Amp\`ere Equation (HMAE) is a higher dimensional analogue of the Laplace equation on complex manifolds.    We first describe some work of Donaldson \cite{Donaldson} that gives an equivalence between regular solutions of a certain HMAE and families of holomorphic discs with boundaries in a so called ``LS submanifold'' which will be defined below.

Let $X$ be a compact K\"ahler manifold of complex dimension $n,$ and let $\omega_0$ be a K\"ahler form. This means that locally $\omega_0$ is equal to $i\partial \bar{\partial}\phi$ where $\phi$ is a smooth strictly plurisubharmonic function. For example, if $X$ has an ample line bundle $L,$ and $h$ is a positive metric on $L,$ then the curvature form $i\Theta_h$ is a K\"ahler form on $X$ which lies in the first Chern class of $L.$ Any K\"ahler class in the same cohomology class as $\omega_0$ can be written as $\omega_0+i\partial\bar{\partial}\phi$ where $\phi$ is a global smooth function.

Let $D$ denote the closed unit disc in $\mathbb C$ and $D^{\times} = D-\{0\}$.  Also let $\Omega_0$ be the pullback of $\omega_0$ by the projection $\pi\colon X\times D\to D$.  Suppose next that $F$ is a smooth function on $X\times \partial D$.  We shall write $F_{\tau}(\cdot) = F(\cdot,\tau)$ thought of as a function on $X$, and suppose that $\omega_0+i\partial \bar{\partial}F_{\tau}$ is a K\"ahler form for all $\tau \in \partial D$.   A \emph{regular solution} to the associated HMAE is an extension of $F$ to a smooth function $\Phi$ on $X\times D$ such that $\omega_0+i\partial \bar{\partial}\Phi_{\tau}$ is a K\"ahler form for all $\tau\in D$ and 
\[(\Omega_0+i\partial \bar{\partial}\Phi)^{n+1}=0\] on $X\times D.$  
We observe that clearly finding a solution to this boundary value problem is unchanged if $F$ is replaced by $F+c$ for some constant $c$.  

Now suppose $\Phi$ is a regular solution to the above HMAE. Then the kernel of $\Omega_0+i\partial \bar{\partial}\Phi$ defines a one dimensional complex distribution, which one can show is integrable, thus giving a foliation of $X\times D$ by holomorphic discs transverse to the $X$ fibres (see \cite{Bedford} and \cite{Donaldson}). \medskip

A further connection with holomorphic discs comes about through the following idea due to Donaldson \cite{Donaldson} and independently Semmes \cite{Semmes}.  Let $\Theta:=\Theta_1+i\Theta_2$ denote a holomorphic $2$-form on a complex manifold $W,$ where $\Theta_1$ and $\Theta_2$ are real symplectic forms. A (real) submanifold $Y$ of $W$ is called a \emph{$LS$ submanifold} if it is Lagrangian with respect to $\Theta_1$ while it is symplectic with respect to $\Theta_2.$   

The point of this definition is that there exists a holomorphic fibre bundle $W_X$ over $X$ such that K\"ahler forms in the same class as $\omega_0$ correspond to closed LS submanifolds in $W_X.$  We sketch its construction: if $\omega_0$ has a potential $\phi_U$ in some open set $U,$ we identify $W_U$  with the $(1,0)$ part of the complex cotangent bundle. If $z_i$ are local holomorphic coordinates any $(1,0)$-form can be written as $\sum_i \xi_i dz_i,$ thus $(z_i,\xi_j)$ are local holomorphic coordinates of $W_U.$ We also have non-degenerate holomorphic $2$-form, namely $\Theta:=\sum_i d\xi_i\wedge dz_i.$ 

If $V$ is some other open set where $\omega_0$ has a potential $\phi_V,$ then on $U\cap V$ the transition function is given by $\partial(\phi_V-\phi_U).$ It means that the section, locally defined as $\partial \phi_U,$ is in fact globally defined. By a simple calculation the graph of this section turns out to be a LS submanifold in $W_X.$ Let $\omega$ be some other K\"ahler form in the same class as $\omega_0.$ Then $\omega=\omega_0+i\partial \bar{\partial}\psi$ for some smooth global function $\psi,$ and the section $\partial\phi_U+\partial \psi$ defines a section of $W_X.$ By the same reasons this is also a LS submanifold. Donaldson shows that K\"ahler forms in the class of $\omega_0$ are in one-to-one correspondence with the closed LS submanifolds of $W_X.$    

Now let $F$ be a smooth function on $X\times \partial D$ such that $\omega_0+i\partial \bar{\partial}F_{\theta}$ is positive for all $\theta\in \partial D$.    For each $\theta\in \partial D,$ let $\Lambda_{\theta}$ denote the LS submanifold in $W_X$ corresponding to $\omega_0+ i\partial\bar{\partial} F_{\theta}$.

\begin{theorem}[Donaldson  {\cite[Theorem 3]{Donaldson}}]\label{thm:donaldson1}
There is a regular solution to the complex HMAE with boundary values given by $F$ iff there is a smooth family of holomorphic discs $g_x: D \to W_X$ parametrised by $x\in X$ with
\begin{enumerate}
\item $\pi(g_x(0))=x \in X;$
\item for each $\theta \in \partial D$ and each $x\in X,$ 
\[g_x(\theta)\in \Lambda_{\theta};\]
\item for each $\tau \in D$ the map $x \mapsto \pi(g_x(\tau))$ is a diffeomorphism from $X$ to $X.$ 
\end{enumerate}
In fact for fixed $\tau\in D,$ the image of the map $x \mapsto g_x(\tau)$ is the LS submanifold $\Lambda_{\tau}$ associated to the K\"ahler form $\omega_0 + i\partial \bar{\partial} \Phi_{\tau}$.
\end{theorem}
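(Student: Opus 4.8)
Following Donaldson's approach, the plan is to prove the two implications by passing back and forth between the kernel foliation of the form $\eta:=\Omega_0+i\partial\bar\partial\Phi$ on $X\times D$ and the disc family $\{g_x\}$, using as a bridge the correspondence recalled above between K\"ahler forms in $[\omega_0]$ and closed LS submanifolds of $W_X$. A regular solution produces, for each $\tau\in D$, the K\"ahler form $\omega_0+i\partial\bar\partial\Phi_\tau$ and hence an LS submanifold $\Lambda^\Phi_\tau$ of $W_X$, with $\Lambda^\Phi_\theta=\Lambda_\theta$ for $\theta\in\partial D$ since $\Phi_\theta=F_\theta$; the content of the HMAE is precisely that this $D$-family of LS submanifolds is swept out holomorphically.

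\textbf{Solution $\Rightarrow$ discs.} Given a regular solution, I would first observe that $\eta$ is a closed real $(1,1)$-form whose restriction to each slice $X\times\{\tau\}$ is $\omega_0+i\partial\bar\partial\Phi_\tau>0$; hence $\eta$ has rank $\ge n$ everywhere, while $\eta^{n+1}=0$ forces rank $\le n$, so $\ker\eta$ is a complex line field, transverse to the $X$-slices by fibre positivity. By the integrability recalled above this is tangent to a foliation of $X\times D$ by holomorphic discs, and compactness of $X$ lets me take each leaf to be a disc $\sigma_x\colon D\to X\times D$ with $\pi_D\circ\sigma_x=\mathrm{id}$ and $\sigma_x(0)=(x,0)$. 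I would then define $G\colon X\times D\to W_X$ sending $(x,\tau)$ to the point of $\Lambda^\Phi_\tau$ over $x$ — in a chart with $\omega_0=i\partial\bar\partial\phi_U$ this is $(x,\partial_x(\phi_U+\Phi)(x,\tau))$ in coordinates $(z_i,\xi_j)$ — and set $g_x:=G\circ\sigma_x$. The $X$-component of $g_x$ is holomorphic since the leaf is a holomorphic curve biholomorphic to $D$; for the fibre components one computes that their Cauchy--Riemann equations along $\sigma_x$ are equivalent to $i_v\eta=0$ for $v$ the $(1,0)$-tangent of the leaf, which is the defining condition of $\ker\eta$. So each $g_x$ is a holomorphic disc, and (1)--(3) follow at once: (1) from $\sigma_x(0)=(x,0)$; (2) from $\Lambda^\Phi_\theta=\Lambda_\theta$; (3) because the leaves foliate $X\times D$, so $x\mapsto\pi(g_x(\tau))$ is the leaf-holonomy diffeomorphism. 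Since that map is bijective, $\{g_x(\tau):x\in X\}=\Lambda^\Phi_\tau$, which is the ``in fact'' claim.

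\textbf{Discs $\Rightarrow$ solution.} For the converse I would set $G(x,\tau):=g_x(\tau)$ and $N_\tau:=\{g_x(\tau):x\in X\}$; condition (3) makes $\pi\colon N_\tau\to X$ a diffeomorphism, so $N_\tau$ is the graph of a smooth section $s_\tau$, and with $\alpha_\tau:=s_\tau^*\lambda$ ($\lambda$ the tautological $1$-form) we have $s_\tau^*\Theta=d\alpha_\tau$. Holomorphicity of each $g_x$ gives $g_x^*\Theta=0$; inserting this into $G^*\Theta$ shows $\alpha_\tau$ depends holomorphically on $\tau$ in the relevant sense, so that the boundary condition $N_\theta=\Lambda_\theta$ (Lagrangian for $\Theta_1$) propagates by a maximum-principle argument to $\operatorname{Re}(d\alpha_\tau)=0$, i.e.\ $N_\tau$ is Lagrangian for $\Theta_1$. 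Writing $\alpha_\tau=\partial\psi$ locally, $\omega_\tau:=i\partial\bar\partial\psi$ is a well-defined closed real $(1,1)$-form in $[\omega_0]$; condition (3) then forces $\omega_\tau>0$, so $N_\tau$ is genuinely LS and $\omega_\tau=\omega_0+i\partial\bar\partial\Phi_\tau$ for a unique $\Phi_\tau$ (normalised by its value at $\tau=0$), which one checks is smooth in $\tau$ up to $\partial D$. Setting $\Phi(x,\tau):=\Phi_\tau(x)$ one has $\Phi_\theta=F_\theta$, and I would verify $(\Omega_0+i\partial\bar\partial\Phi)^{n+1}=0$ by reversing the earlier computation: through each point of $X\times D$ passes a member of the disc family whose $(1,0)$-tangent lies in $\ker(\Omega_0+i\partial\bar\partial\Phi)$, so that form has rank $\le n$ everywhere.

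\textbf{Main obstacle.} I expect the crux to be the computation identifying $i_v\eta=0$ with the Cauchy--Riemann equations for $G\circ\sigma_x$ in the fibre coordinates of $W_X$; the rest of the forward direction is bookkeeping once integrability and completeness of the kernel foliation are granted. In the converse the delicate parts are showing that $N_\tau$ is honestly an LS submanifold — propagating the Lagrangian property inward from $\partial D$ using holomorphicity, and extracting the interior positivity $\omega_\tau>0$ from hypothesis (3) — together with the smooth dependence of $\Phi_\tau$ on $\tau$ near the boundary.
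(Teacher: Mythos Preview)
The paper does not prove this statement at all: it is simply quoted as \cite[Theorem 3]{Donaldson} and then used as a black box (the very next sentence begins ``Using this theorem Donaldson applies\ldots''). So there is no proof in the paper to compare your proposal against.

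That said, your sketch is a faithful outline of Donaldson's own argument in \cite{Donaldson}: the forward direction via the integrable kernel foliation of $\Omega_0+i\partial\bar\partial\Phi$ and the lift $G$ to $W_X$, and the converse via reconstructing the LS submanifolds $N_\tau$ from the disc family and recovering $\Phi_\tau$. The points you flag as delicate --- identifying $i_v\eta=0$ with the Cauchy--Riemann equations for the fibre coordinates of $G\circ\sigma_x$, and propagating the Lagrangian condition inward from $\partial D$ --- are indeed where the work lies in Donaldson's paper, and your proposal correctly locates them without fully carrying them out. If you were to write this up you would need to supply those computations in detail, but as a strategy there is nothing wrong here; it is just that the present paper never intended to reproduce this argument.
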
  

Using this theorem Donaldson applies the theory of the moduli spaces for embedded holomorphic discs with boundaries in a totally real submanifold to deduce the following:

\begin{theorem}[Donaldson {\cite[Theorem 1]{Donaldson}}]
  The set of boundary conditions $F$ for which the HMAE above has a regular solution $\Phi$ is open in the $C^2$ topology. 
\end{theorem}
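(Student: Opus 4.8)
The plan is to use the equivalence of Theorem~\ref{thm:donaldson1}: a regular solution of the HMAE with boundary data $F$ is exactly a smooth family of holomorphic discs $g_x\colon D\to W_X$, $x\in X$, whose boundaries lie in the moving totally real (LS) submanifolds $\Lambda_\theta$ and which project fibrewise diffeomorphically to $X$. A $C^2$-small perturbation $F'$ of $F$ keeps $\omega_0+i\partial\bar\partial F'_\theta$ positive and produces a $C^1$-small perturbation of the family $\{\Lambda_\theta\}_{\theta\in\partial D}$, since only one $z$-derivative of $F_\theta$ enters the section cutting out $\Lambda_\theta$; this is why the $C^2$ topology on $F$ is the natural one. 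So it suffices to show that the family $\{g_x\}$ persists, depending smoothly on the perturbation, and that the perturbed family still satisfies conditions (1)--(3) of Theorem~\ref{thm:donaldson1}; then $F'$ admits a regular solution and openness follows.

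First I would set up the deformation theory of a single disc. Fixing the (perturbed) assignment $\theta\mapsto\Lambda_\theta$ and a holomorphic disc $g$ with $g(\partial D)\subset\Lambda_\theta$, linearisation produces a $\bar\partial$-operator on sections of $g^*TW_X$ over $(D,\partial D)$ with the totally real boundary condition coming from $g^*T\Lambda_\theta$; this operator is Fredholm in suitable H\"older spaces and its cokernel is the obstruction space. The crucial point is that for the discs $g_x$ of Theorem~\ref{thm:donaldson1} this obstruction space vanishes: since the family foliates $X\times D$ transversally to the fibres, the normal bundle of $g_x(D)$ in $W_X$, equipped with the boundary condition from $\Lambda_\theta$, is a bundle pair over $(D,\partial D)$ with vanishing $H^1$ --- this is where the LS structure (Lagrangian for $\Theta_1$, symplectic for $\Theta_2$) and the strict plurisubharmonicity of the potentials must be used. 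Consequently the moduli space of holomorphic discs near $g_x$ attached to $\{\Lambda_\theta\}$ is a smooth manifold of the expected dimension, with evaluation at $0$ a local diffeomorphism onto $X$. Establishing this regularity / automatic-transversality statement is the step I expect to be the main obstacle, being the one genuinely geometric input; the rest is essentially formal.

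Granting this, the Banach-space implicit function theorem gives, for each $\{\Lambda'_\theta\}$ that is $C^1$-close to $\{\Lambda_\theta\}$ and each $x$, a unique holomorphic disc $g'_x$ near $g_x$ with boundary in the $\Lambda'_\theta$, varying smoothly in $x$ and in the perturbation; uniqueness patches these into a global smooth family over the compact manifold $X$. It then remains to verify (1)--(3) for the perturbed family. Condition (1) is arranged by precomposing $x\mapsto g'_x$ with the inverse of the evaluation-at-$0$ map $x\mapsto\pi(g'_x(0))$, which is a diffeomorphism since it is $C^1$-close to the identity; condition (2) holds by construction; the discs remain embedded (an open condition); and condition (3), that $x\mapsto\pi(g'_x(\tau))$ be a diffeomorphism for every $\tau\in D$, is open in the $C^1$ topology on the family, so it persists once $F'$ is taken $C^2$-close enough to $F$, the discs and their first derivatives being uniformly close in $x$ and $\tau$. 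Then Theorem~\ref{thm:donaldson1} yields a regular solution $\Phi'$ of the HMAE with boundary data $F'$, proving that the set of admissible $F$ is open in the $C^2$ topology.
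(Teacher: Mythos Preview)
The paper does not prove this theorem; it is quoted from Donaldson \cite[Theorem~1]{Donaldson} without proof, with only the one-sentence indication that Donaldson ``applies the theory of the moduli spaces for embedded holomorphic discs with boundaries in a totally real submanifold'' to deduce it from Theorem~\ref{thm:donaldson1}. So there is no proof in the paper to compare against.

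Your outline is faithful to Donaldson's actual argument and to the strategy the paper alludes to: reduce via Theorem~\ref{thm:donaldson1} to persistence of the disc family under $C^1$-small perturbations of the LS-submanifolds $\Lambda_\theta$ (hence $C^2$-small perturbations of $F$), establish automatic transversality for the discs $g_x$, apply the implicit function theorem to obtain the perturbed family, and then check that conditions (1)--(3) are open. You are also right to flag the vanishing of the obstruction space as the substantive step; in Donaldson's paper this is handled by computing the relevant Maslov index for discs attached to an LS-submanifold, which is where the Lagrangian/symplectic splitting of $\Theta$ enters. One small point: strictly speaking the discs $g_x$ in Theorem~\ref{thm:donaldson1} need not be embedded, so the deformation theory should be set up for parametrised discs (maps from $D$) rather than for embedded curves; this does not change the Fredholm analysis but is worth getting right.
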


\subsection{Short-time existence}

We now return to the short time existence problem with empty initial condition.   Fix a potential $\phi$ in the unit ball in $\mathbb C$ with $\Delta \phi = 1/\kappa$, where $\kappa$ is the positive smooth function that encodes the permeability for the Hele-Shaw flow.  

In the usual way think of $\mathbb C$ embedded in $\mathbb{P}^1=\mathbb{C}\cup \{\infty\}$.  We let $\omega_{FS}$ be the standard Fubini-Study metric on $\mathbb P^1$, which has potential $\phi_{FS} = \log(1+|z|^2)$ on $\mathbb C$.    Using a partition of unity we can, without loss of generality,  assume that $\phi$ is defined on all of $\mathbb C$ and so that $\phi-\phi_{FS}$ extends to a smooth function at infinity (so said another way, $\phi$ is the local potential of some globally defined potential whose curvature is in the same K\"ahler class as $\omega_{FS}$).  

%Now for any $\theta\in \partial D$ set
%\begin{equation}
%  \label{eq:F}  
%F(z,\theta):=\phi(\theta z)-\phi_{FS}(z).
%\end{equation}
We also consider the $\mathbb{C}^*$-action on $\mathbb{C}^2$ given by
\[\rho(\lambda): (z,w) \mapsto (\lambda z, \lambda^{-1} w), \quad\lambda \in \mathbb C^*.\]
which extends to an action on $\mathbb P^1\times \mathbb C$.  We also denote by $\rho$ the induced action of $S^1$ on $\mathbb P^1\times S^1$.

\begin{proposition}\label{prop:hmae}
There exists an $c\in \mathbb R_{>0}$ and a smooth function $\tilde{F}$ on $\mathbb P^1\times S^1$ such that 
\begin{enumerate}
\item There exists a regular solution to the HMAE with boundary data $\omega_{FS} + i\partial\bar{\partial} \tilde{F}_{\theta}$ for $\theta\in S^1$.  \item There is are constants $c,r\in \mathbb R_{>0}$ such that $\omega_{FS} + i \partial \bar{\partial} \tilde{F}_{\theta}$ has local potential $c\phi(\theta z)$ for $(z,\theta)\in B_r\times S^1$ where $B_r$ is the ball of radius $r$ centered at the origin.
\item The function $\tilde{F}$ is invariant under the $S^1$-action $\rho$.
\end{enumerate}
\end{proposition}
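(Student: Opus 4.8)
The plan is to build $\tilde F$ explicitly so that its boundary data interpolates between the given local potential $c\phi$ near the origin and the Fubini--Study potential away from it, then verify the three conditions by reducing the HMAE solvability to Donaldson's openness theorem combined with the $S^1$-equivariance.

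First I would exploit the $S^1$-action $\rho$. The idea is to look for $\tilde F$ of the form $\tilde F_\theta(z) = G(\theta^{-1} z)$ (or more precisely $\tilde F(z,\theta) = G(\rho(\theta^{-1})\cdot z)$ on $\mathbb P^1$), which is automatically $\rho$-invariant; this disposes of condition (3) by construction and simultaneously forces condition (2) to be the single requirement that the global potential of $\omega_{FS} + i\partial\bar\partial G$ agrees with $c\phi$ on a small ball $B_r$. So the task reduces to: find $c>0$, $r>0$ and a single smooth function $G$ on $\mathbb P^1$ with $\omega_{FS} + i\partial\bar\partial G > 0$ whose local potential equals $c\phi$ on $B_r$, and such that the (automatically $S^1$-invariant) boundary data on $\mathbb P^1 \times S^1$ admits a regular solution to the HMAE.

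The existence of such a $G$ with the prescribed local behaviour near $0$ is elementary: since $\phi - \phi_{FS}$ extends smoothly to infinity (as arranged in the excerpt), $c(\phi - \phi_{FS})$ is a smooth global function for every $c$, and multiplying it by a cutoff that is $1$ on $B_r$ and supported in a slightly larger ball produces a candidate $G$ whose local potential is exactly $c\phi$ on $B_r$; positivity of $\omega_{FS} + i\partial\bar\partial G$ off $B_r$ is arranged by taking $c$ small enough (strict subharmonicity of $\phi$ on $B_r$ gives positivity there for any $c>0$, and on the fixed cutoff region the correction term is $O(c)$ in $C^2$ while $\omega_{FS}$ is a fixed positive form). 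Here the homogeneity noted in the excerpt — replacing $\kappa$ by $c\kappa$, equivalently $\phi$ by $c\phi$, does not change the Hele-Shaw flow up to time reparametrisation — is exactly what makes shrinking $c$ harmless.

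The main obstacle, and the place where Donaldson's machinery enters, is condition (1): producing a regular solution to the HMAE with this boundary data. I would argue by a deformation/openness argument. Start from a reference boundary datum for which a regular solution is known to exist — the natural choice is the one whose local potential is $c\phi_{FS}$ everywhere, i.e.\ (a multiple of) the Fubini--Study potential itself, for which the HMAE has the obvious $S^1$-invariant regular solution (the potential is literally independent of $\theta$, so the constant extension solves the homogeneous equation and foliates $\mathbb P^1\times D$ by the horizontal discs $\{x\}\times D$). Then connect this to the desired boundary datum through the path $c_s$, $s\in[0,1]$, of multiples, each time keeping the cutoff fixed; for $c$ small this whole path stays inside the $C^2$-open set of boundary conditions admitting a regular solution by Donaldson's Theorem (the openness theorem quoted in the excerpt), so the endpoint is solvable. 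One point requiring a remark is that the regular solution obtained at the endpoint may a priori fail to be $S^1$-invariant, but uniqueness of the regular solution (it is determined by its boundary data, being traced out by the holomorphic disc foliation of Theorem~\ref{thm:donaldson1}) together with the $S^1$-invariance of the boundary datum forces the solution to be $S^1$-invariant as well — so no separate equivariant existence theory is needed. Thus all three conditions hold simultaneously for a single small $c$ and suitable $r$, which is the assertion of the proposition.
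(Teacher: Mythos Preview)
Your overall architecture --- set $\tilde F(z,\theta)=G(\theta z)$ for a single function $G$ on $\mathbb P^1$ and then invoke Donaldson's openness around a $\theta$-independent reference --- is exactly the paper's strategy. But your concrete construction of $G$ does not satisfy condition~(2). With $G=\chi\cdot c(\phi-\phi_{FS})$ and $\chi\equiv 1$ on $B_r$, the local potential of $\omega_{FS}+i\partial\bar\partial\tilde F_\theta$ on $B_r$ is
\[
\phi_{FS}(z)+G(\theta z)=\phi_{FS}(z)+c\phi(\theta z)-c\phi_{FS}(\theta z)=(1-c)\phi_{FS}(z)+c\phi(\theta z),
\]
not $c\phi(\theta z)$; the unwanted term $(1-c)\phi_{FS}$ is not constant unless $c=1$. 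More importantly, your smallness mechanism is the wrong one: once you fix condition~(2), the value of $G$ on $B_r$ is forced to be $c\phi-\phi_{FS}$, and letting $c\to 0$ sends this to $-\phi_{FS}$, which is neither small in $C^2$ nor gives a positive form. So shrinking $c$ cannot bring $\tilde F$ close in $C^2$ to any admissible $\theta$-independent reference, and the openness argument collapses.

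The paper closes this gap by first normalising $\phi$: subtracting the harmonic second-order jet and rescaling so that $\phi(z)=|z|^2+o(|z|^2)$. The point is that $\phi$ is then genuinely $C^2$-close, on a small ball, to the $S^1$-invariant function $\psi(z)=|z|^2$; the smallness parameter is the radius of this ball (equivalently the $o(|z|^2)$ remainder), not $c$, and in fact $c=1-\delta_1$ is taken close to $1$. The gluing of $(1-\delta_1)\phi-\delta_2$ to $\psi$ is done via a regularised maximum (Lemma~\ref{lem:regmax} and Lemma~\ref{lem:max}) rather than a cutoff, which automatically preserves strict subharmonicity and yields a quantitative $C^2$-estimate $\|\phi^\epsilon-\psi\|_{C^2}<\epsilon$. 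A naive cutoff can be made to work after the same normalisation, but you would still need to control the $C^2$-norm on the transition annulus and verify positivity there --- the regularised max does both for free. Your remark that uniqueness of the regular solution forces $S^1$-invariance is correct and is exactly how the paper proceeds after this point.
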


Assuming this for now we prove the short term existence result.  Let $\tilde{\Lambda}_{\theta}$ denote the LS-manifold associated to $\omega_{FS} + i\partial\bar{\partial}\tilde{F}_{\theta}$.    Under the natural identification of $W_{\mathbb{C}}$ with $\mathbb{C}^2\to \mathbb C$ the second condition tells us that
 \[\tilde{\Lambda}_{\theta} \cap (B_{\delta}\times \mathbb C)=\{ (z, c \frac{\partial}{\partial z} \phi(\theta z)) : z\in  B_{\delta}\} = \{(z,c\theta\frac{\partial \phi}{\partial z}(\theta z)): z\in B_{\delta}\}.\]
Moreover as $\tilde{F}$ is invariant under $\rho$, the automorphism $\rho(\theta)$ for $\theta\in S^1$  maps the LS manifold $\tilde{\Lambda}_{\theta}$ to $\tilde{\Lambda}_1.$    \medskip

 By  Theorem \ref{thm:donaldson1} there is an associated family of holomorphic discs $g_x$ in $W_{\mathbb{P}^1}.$  Since the function $\tilde{F}$ is invariant under the $S^1$-action $\rho,$ the solution $\Phi$ is also invariant, and in particular the restriction $\Phi_0$ to the central fibre is $S^1$-invariant. Set $\phi_0 := \Phi_0 + \log(1+|z|^2)$.   Then $\phi_0$ is an $S^1$-invariant strictly subharmonic function, which implies that $\frac{\partial \phi_0}{\partial z}(0)=0$ and $\frac{\partial \phi_0}{\partial z}(z)\neq 0$ whenever $z\neq 0.$ This means that $g_0(0)=(0,0),$ while for any $x\neq 0,$ the second component of $g_x(0)$ is not $0.$  

Furthermore the family of discs $g_x$ is invariant under $\rho$, i.e.
\[\rho(\theta)g_x(\theta \tau) = g_{\theta x}(\tau) \quad \text{ for } \theta\in \partial D,  \tau \in D.\]
In particular 
\begin{equation} \label{invariance}
g_{0}(\theta)=\rho(\theta^{-1})(g_0)(1) \quad \text{ for } \theta \in \partial D
\end{equation}
and since $\pi(g_0(0))=0$ we see $\pi g_0$ must be the constant map to $0$.  So, by continuity of the family $g$, we have that as long as $|x|$ is sufficiently small, the  image of $g_x: D \to W_{\mathbb{P}^1},$ $x\neq 0,$ lies in $W_{\mathbb{C}}\cong \mathbb{C}^2$.

So fix such an $x$ with $x\neq 0$.   Then by twisting the disc using the $\mathbb{C}^*$-action $\rho$ we get a new punctured disc $f_x: D^{\times} \to \mathbb{C}\times \mathbb{P}^1$ given by 
\[f_x(\tau):=\rho(\tau)(g_x(\tau)).\] 
Since the second coordinate of $g_x(0)$ is not zero  $f_x$ extends to a map $D:\mathbb{C}\times \mathbb{P}^1$.  Moreover for $\theta \in \partial D$ and $\tau\in D$  we have 
\[f_{\theta x}(\tau) = \rho(\tau) g_{\theta x}(\tau) = \rho(\tau \theta) g_x(\theta \tau)= f_x(\theta\tau)\]
so the holomorphic disc defined by $f_x$ depends only on $t=|x|$, which we shall now denote by  $\Sigma_t$.    Observe that  $f_x(0)=(0,\infty)$ so $\Sigma_t$ meets $\{0\}\times \mathbb P^1$ at $(0,\infty)$.  Moreover  if $\tau\in \partial D$ then $g_x(\tau)\in \tilde{\Lambda}_{\tau}$.   Thus as long as $t$ is sufficiently small, the boundary of $\Sigma_t$ lies in 
$$\Lambda := \{ (z,c\frac{\partial \phi}{\partial z}) \}.$$

Recall that the map $\gamma(x):=\pi g_x(1)$ is a diffeomorphism of $\mathbb{P}^1$ to itself. Since, by invariance $\pi f_x(\theta)=\pi g_{\theta x}(1)=\gamma(\theta x)$ for $\theta \in \partial D$ we deduce
$$\partial \Omega_t = \{ \pi f_x(\theta) : \theta \in \partial D\} = \{\gamma(\theta x): \theta \in \partial D\}$$
which is precisely the image of the circle of radius $t=|x|$ under $\gamma$.    In particular this shows that, for small $t>0$, $\Omega_t$ is an increasing family of smooth Jordan domains containing the origin, and $\pi\colon \Sigma_t\to \pi(\Sigma_t)$ is an isomorphism and the limit $\Omega_t$ as $t$ tends to zero is the origin.    Hence by Theorem \ref{thm:holomorphicdiscs} the family $\Omega_t  = \pi(\Sigma_t)$ satisfies the Hele-Shaw flow with permeability $\kappa:=1/(c\Delta \phi)$.  Thus by the homogeneity of the Hele-Shaw problem with empty initial condition, this solves the Hele-Shaw flow with permeability $\kappa$ as well after a reparamterisation of time.\medskip

Thus it remains to prove Proposition \ref{prop:hmae}.    Observe that if $\phi$ happened to be rotation invariant, i.e. $\phi(\tau z)=\phi(z)$ for all $\tau\in \partial D,$ then the function
\begin{equation}
  \label{eq:F}  
F(z,\theta):=\phi(\theta z)-\phi_{FS}(z).
\end{equation}
would also be invariant and the HMAE has a trivial regular solution, namely $\Phi(z,\tau):=\phi(z)-\log(1+|z|^2)$  (this reflects the elementary observation that if the permeability $\kappa$ is radially symmetric then the Hele-Shaw flow with empty initial condition will be the trivial solution in which $\Omega_t$ is a disc centered at the origin.)   Of course it this need not be the case, but we will show that any potential $\phi$ is close (in a precise sense) to one that is invariant, at which point we can apply Donaldson's openness theorem.

We will need following regularisation of the maximum of two smooth functions, which we prove for completeness.

 \begin{lemma}\label{lem:regmax}
Let $\alpha= \alpha(z)$ and $\beta = \beta(z)$ be real valued smooth functions on the closure of a bounded open subset $U$ of $\mathbb C$.  Let $f$ be a smooth non-negative bump function on the real line with unit integral and support in $(-a,a)$ for some $a>0$ and define $u\colon U\to \mathbb C$ by
   \begin{eqnarray*}
u &=& \int_{\mathbb R} \max\{ \alpha, \beta + \lambda\} f(\lambda) d\lambda.
   \end{eqnarray*}
   \begin{enumerate}
   \item If $z$ is such that $\alpha(z) > \beta(z) + a$ then $u(z) = \alpha(z)$.  Similarly if $\alpha(z)<\beta(z)-a$ then $u(z) = \beta(z)$.
   \item The function $u$ is smooth on $U$ and
$$ \| u -\alpha\|_{C^2} \le  a + \| \alpha-\beta\|_{C^2} + \|d\alpha-d\beta\|_{C^0}^2 \|f\|_{C^0}$$
\end{enumerate}
\end{lemma}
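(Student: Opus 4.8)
The plan is to verify both claims directly by unwinding the definition of $u$ as a convolution-type average of $\max\{\alpha,\beta+\lambda\}$ against the bump function $f$, and to get the $C^2$ bound by differentiating under the integral sign.

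For part (1), suppose $\alpha(z) > \beta(z) + a$. Since $f$ is supported in $(-a,a)$, the integral only sees $\lambda \in (-a,a)$, and for every such $\lambda$ we have $\beta(z) + \lambda < \beta(z) + a < \alpha(z)$, so $\max\{\alpha(z), \beta(z)+\lambda\} = \alpha(z)$. Hence $u(z) = \alpha(z)\int_{\mathbb R} f(\lambda)\,d\lambda = \alpha(z)$ since $f$ has unit integral. The case $\alpha(z) < \beta(z) - a$ is symmetric: then $\beta(z) + \lambda > \beta(z) - a > \alpha(z)$ for all $\lambda$ in the support, so $u(z) = \beta(z)$.

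For part (2), the key idea is to rewrite the integrand by a change of variable so that the nonsmooth dependence is pushed onto $f$, which is smooth. Explicitly, for fixed $z$ write $g(z) := \alpha(z) - \beta(z)$ and substitute $\mu = \lambda + g(z)$ (or directly observe $\max\{\alpha,\beta+\lambda\} = \beta(z) + \max\{g(z), \lambda\}$) to express
\[
u(z) = \beta(z) + \int_{\mathbb R}\max\{g(z),\lambda\} f(\lambda)\, d\lambda.
\]
Now $\max\{g,\lambda\} = \lambda + (g-\lambda)^+$ and one integrates by parts, or more simply one notes that $h(s) := \int_{\mathbb R}\max\{s,\lambda\}f(\lambda)\,d\lambda$ is a smooth function of the single real variable $s$ (its first derivative is $h'(s) = \int_{-\infty}^{s} f(\lambda)\,d\lambda = F(s)$, a smooth antiderivative of $f$, and $h''(s) = f(s)$), so $u(z) = \beta(z) + h(g(z))$ is a composition of smooth maps and hence smooth on $U$. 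For the estimate, write $u - \alpha = (\beta - \alpha) + h(g) = h(g) - g$, and note $h(s) - s = \int \max\{s-\lambda, 0\} f(\lambda)\,d\lambda$, which is bounded in $C^0$ by $a$ (since $|s-\lambda| \le a$ wherever it is positive... more carefully, $0 \le h(s)-s \le \int \max\{-\lambda,0\} f\,d\lambda \le a$). Then $d(u-\alpha) = (F(g) - 1)\,dg$, and since $0 \le F(g) \le 1$ this has $C^0$-norm at most $\|dg\|_{C^0} = \|d\alpha - d\beta\|_{C^0} \le \|\alpha-\beta\|_{C^2}$. Finally $\nabla^2(u-\alpha) = (F(g)-1)\nabla^2 g + f(g)\, dg\otimes dg$, whose $C^0$-norm is at most $\|\nabla^2 g\|_{C^0} + \|f\|_{C^0}\|dg\|_{C^0}^2 \le \|\alpha-\beta\|_{C^2} + \|f\|_{C^0}\|d\alpha-d\beta\|_{C^0}^2$. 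Adding the three contributions (the $C^0$ term contributes $a$, plus $\|\alpha-\beta\|_{C^2}$ from the first-derivative estimate being dominated and the second, plus the quadratic term) gives
\[
\|u - \alpha\|_{C^2} \le a + \|\alpha-\beta\|_{C^2} + \|d\alpha - d\beta\|_{C^0}^2\,\|f\|_{C^0},
\]
as claimed.

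The only mildly delicate point — the "main obstacle," such as it is — is bookkeeping with the $C^2$ norm: one must be a little careful about which terms of the various derivative bounds get absorbed into the single summand $\|\alpha-\beta\|_{C^2}$, since that quantity simultaneously controls $\|\alpha-\beta\|_{C^0}$, $\|d\alpha-d\beta\|_{C^0}$ and $\|\nabla^2\alpha - \nabla^2\beta\|_{C^0}$; once one realizes the estimate is deliberately lossy and these can all be bounded by the single term, the inequality falls out. Nothing here requires more than differentiation under the integral sign, the fundamental theorem of calculus, and the elementary identity $\max\{s,\lambda\} = \lambda + (s-\lambda)^+$.
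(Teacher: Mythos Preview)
Your approach is essentially the paper's, just packaged more cleanly: writing $u=\beta+h(g)$ with $g=\alpha-\beta$ and $h(s)=\int\max\{s,\lambda\}f(\lambda)\,d\lambda$, $h'=F$, $h''=f$, is exactly the paper's splitting of the integral at $\gamma=\alpha-\beta$, and your chain-rule computations for $d(u-\alpha)$ and $\nabla^2(u-\alpha)$ reproduce the paper's formulas verbatim.

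There is one genuine slip, however, in your $C^0$ step. You write $h(s)-s=\int\max\{s-\lambda,0\}f(\lambda)\,d\lambda$ and claim this is bounded by $a$ for all $s$. First, the sign is reversed: in fact $h(s)-s=\int(\lambda-s)^{+}f(\lambda)\,d\lambda$. Second, and more importantly, this quantity is \emph{not} bounded by $a$ uniformly in $s$: if $s\le -a$ then $(\lambda-s)^{+}=\lambda-s$ on $\operatorname{supp}f$ and the integral equals $\int\lambda f\,d\lambda - s$, which grows without bound as $s\to-\infty$. The correct estimate (and the one the paper gives) is
\[
0\le h(g(z))-g(z)=\int(\lambda-g(z))^{+}f(\lambda)\,d\lambda\le a+|g(z)|\le a+\|\alpha-\beta\|_{C^0}.
\]
This does not damage the final inequality, since $\|\alpha-\beta\|_{C^0}\le\|\alpha-\beta\|_{C^2}$ and that term is already present on the right-hand side; indeed your own closing remark about the estimate being ``deliberately lossy'' is exactly the point. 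But the intermediate claim as written is false and should be corrected.
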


\begin{proof}
If $\alpha(z)>\beta(z) +a$ then $\sup\{\alpha,\beta+\lambda\} = \alpha$ for all $\lambda\in (-a,a)$ and so $u(z) = \int_{\lambda} \alpha(z) f(\lambda) d\lambda = \alpha(z)$.  The case when $\alpha(z)<\beta(z)-a$ is similar.

Now setting $\gamma = \alpha-\beta$ we have
\[u =     \int_{-\infty}^{\gamma} \alpha f(\lambda) d\lambda + \int_{\gamma}^{\infty} (\beta + \lambda) f(\lambda) d\lambda\]
which shows that $u$ is smooth and
$$ u-\alpha = \int_{\gamma}^{\infty} (\beta-\alpha)f(\lambda) d\lambda  + \int_{\gamma}^{\infty} \lambda f(\lambda)d\lambda$$
since $\int_{\lambda} \alpha f(\lambda)d\lambda =\alpha$.  Since $\lambda f(\lambda) = 0$ if $|\lambda|>a$ this implies
$$ \|u-\alpha\| \le \|\alpha-\beta\|_{C^0} + a.$$

Now for any any smooth functions $\alpha,\beta,\gamma$ on $U'$, we have from the Chain rule
$$d\int_{-\infty}^{\gamma} \alpha f(\lambda) d\lambda = \int_{-\infty}^{\gamma} d\alpha f(\lambda) d\lambda + \alpha (f\circ \gamma) d\gamma$$
and similarly
$$d\int_{\gamma}^{\infty} (\beta+\lambda) f(\lambda) d\lambda = -\int_{\gamma}^{\infty} d\beta f(\lambda)d\lambda - (\beta+\gamma)(f\circ\gamma) d\gamma.$$
Thus putting $\gamma = \alpha-\beta$ gives
 \begin{eqnarray}\label{eq:chain}
 du&=& \int_{-\infty}^{\gamma} (d\alpha) f(\lambda) d\lambda + \int_{\gamma}^{\infty} (d\beta) f(\lambda) d\lambda
 \end{eqnarray}
and so
 \begin{eqnarray}\label{eq:chain}
 du-d\alpha&=&  \int_{\gamma}^{\infty} (d\beta-d\alpha) f(\lambda) d\lambda
 \end{eqnarray}
yielding $|du-d\alpha| \le \|\alpha-\beta\|_{C^1}$.

For the second derivative a further differentiation of \eqref{eq:chain} gives that when $D = \frac{\partial ^2}{\partial x\partial y}$,
$$ Du -D\alpha= \int_{\gamma}^{\infty} (D\beta-D\alpha) f(\lambda) d\lambda + (f\circ \gamma) \frac{\partial(\beta-\alpha)}{\partial y} \frac{\partial \gamma}{\partial x}$$
and similarly for the other second order derivatives.  Thus
\begin{eqnarray*}%\label{eq:second}
    |Du-D\alpha| &\le&  \| \alpha-\beta\|_{C^2} + \|d\alpha-d\beta\|_{C^0}^2 \|f\|_{C^0}
  \end{eqnarray*}
 which completes the proof.
\end{proof} 

Returning to the Hele-Shaw flow, the initial data consists of a real-valued smooth strictly subharmonic $\phi$ which we may write locally as
\[\phi(z)=\textrm{Re}(\alpha_0+\alpha_1z+\alpha{2}\bar{z}+\alpha_3 z^2+\alpha_4 \bar{z}^2)+\beta |z|^2+o(|z|^2),\] where $\beta$ is a real positive number. Our requirement on $\phi$ was that $\Delta\phi = 1/\kappa$,  so without loss of generality we can assume that this first term is zero since it is harmonic.  By rescaling there is no loss of generality if we assume $\beta=1$ and so in fact
$$ \phi(z) = |z|^2 + o(|z|^2).$$

Now let $\psi$ be an $S^1$-invariant smooth strictly subharmonic function on $\mathbb C$ that is equal to $|z|^2$ on the unit disc, and such that $\psi-\log(1+|z|^2)$ extends to a smooth function as $|z|$ tends to infinity.

\begin{lemma}\label{lem:max}
For all $\epsilon>0$ there exists a smooth strictly subharmonic $\phi^{\epsilon}$ and $\delta_1,\delta_2, r>0$ such that
\begin{enumerate}
\item $\phi^{\epsilon} = (1-\delta_1) \phi - \delta_2$ on the ball $B_r = \{ |z|<r\}$
\item $\phi^{\epsilon} = \psi$ for $|z|>1$
\item $\|\phi^{\epsilon} - \psi\|_{C^2}<\epsilon.$
\end{enumerate}
\end{lemma}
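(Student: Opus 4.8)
The plan is to build $\phi^{\epsilon}$ not by any gluing that attempts to preserve subharmonicity en route, but simply by adding to $\psi$ a correction supported in a tiny ball about the origin and small in $C^{2}$. Note that a direct regularised–maximum argument in the spirit of Lemma \ref{lem:regmax} does not apply here: near $0$ one has $\psi=|z|^{2}$ while $(1-\delta_1)\phi-\delta_2\approx(1-\delta_1)|z|^{2}-\delta_2<|z|^{2}$, so the function that $\phi^{\epsilon}$ must equal near $0$ lies \emph{below} $\psi$ there, and one cannot push $\psi$ down on $B_r$ while keeping it subharmonic and unchanged outside the unit disc. So instead I would fix once and for all a radial cutoff $\chi_0\colon\mathbb C\to[0,1]$ equal to $1$ on $B_1$ and supported in $B_2$, and for $r\in(0,\tfrac12)$ put $\chi(z):=\chi_0(z/r)$, so that $\chi\equiv1$ on $B_r$, $\chi\equiv0$ off $B_{2r}$, and $\|D^{k}\chi\|_{C^{0}}\le C_k r^{-k}$. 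Then set
\[\phi^{\epsilon}:=\psi+\chi\cdot\bigl((1-\delta_1)\phi-\delta_2-\psi\bigr),\qquad \delta_1\in(0,1),\ \delta_2>0,\]
with $r,\delta_1,\delta_2$ to be pinned down at the end. By construction $\phi^{\epsilon}=(1-\delta_1)\phi-\delta_2$ where $\chi\equiv1$, i.e.\ on $B_r$, and $\phi^{\epsilon}=\psi$ where $\chi\equiv0$, in particular for $|z|\ge 2r$ and hence for $|z|>1$; this is exactly properties (1) and (2).

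Everything else reduces to showing that the correction term $\chi g$, with $g:=(1-\delta_1)\phi-\delta_2-\psi$, is $C^{2}$–small. On $\operatorname{supp}\chi\subseteq B_{2r}$ we have $\psi=|z|^{2}$ (as $2r<1$), so $g=(\phi-|z|^{2})-\delta_1\phi-\delta_2$. The hypothesis $\phi(z)=|z|^{2}+o(|z|^{2})$ forces $\phi-|z|^{2}$ together with its first two derivatives to vanish at the origin, so Taylor's theorem gives $\|\phi-|z|^{2}\|_{C^{j}(B_{2r})}=O(r^{3-j})$ for $j=0,1,2$, together with $\|\phi\|_{C^{0}(B_{2r})}=O(r^{2})$, $\|\phi\|_{C^{1}(B_{2r})}=O(r)$ and $\|\phi\|_{C^{2}(\overline{B}_1)}=O(1)$. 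Feeding these bounds and $\|D^{k}\chi\|_{C^{0}}\le C_k r^{-k}$ into the product rule for $\chi g$, I expect a bound of the shape
\[\|\phi^{\epsilon}-\psi\|_{C^{2}}=\|\chi g\|_{C^{2}}\le K\bigl(r+\delta_1+\delta_2 r^{-2}\bigr),\]
where $K$ depends only on $\phi,\psi,\chi_0$; the $\delta_2 r^{-2}$ term is precisely the constant $-\delta_2$ hitting the second derivatives of $\chi$. Granting this, and given $\epsilon>0$ (which we may take to be $<1$), I would then choose \emph{in this order} $r$ with $Kr<\epsilon/3$, then $\delta_1$ with $K\delta_1<\epsilon/3$, then $\delta_2$ with $K\delta_2 r^{-2}<\epsilon/3$, giving property (3).

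Finally I would check strict subharmonicity region by region: on $B_r$ one has $\Delta\phi^{\epsilon}=(1-\delta_1)\Delta\phi>0$; on $\{|z|\ge 2r\}$ one has $\Delta\phi^{\epsilon}=\Delta\psi>0$; and on the transition annulus $r\le|z|\le 2r$ one has $\Delta\phi^{\epsilon}=\Delta\psi+\Delta(\chi g)=4+\Delta(\chi g)$, which is positive since $|\Delta(\chi g)|\le 2\|\chi g\|_{C^{2}}<2\epsilon<2$. Smoothness of $\phi^{\epsilon}$ is immediate as $\chi$ is compactly supported inside the region where $g$ is smooth. The one genuinely delicate point — and essentially the only obstacle — is the order of the parameter choices in the last step: the cutoff derivatives grow like $r^{-k}$, so once $r$ is fixed one \emph{must} take $\delta_2\ll r^{2}$, otherwise the $\delta_2\,\Delta\chi$ contribution destroys both the $C^{2}$ estimate and subharmonicity on the annulus; all the other estimates are routine Taylor expansion and Leibniz bookkeeping.
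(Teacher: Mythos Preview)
Your argument is correct and takes a genuinely different route from the paper. The paper glues $\psi$ and $(1-\delta_1)\phi-\delta_2$ via the regularised maximum of Lemma~\ref{lem:regmax}, the point being that a regularised max of strictly subharmonic functions is automatically strictly subharmonic, so no hand check on the transition region is needed. Your cutoff construction $\phi^{\epsilon}=\psi+\chi\bigl((1-\delta_1)\phi-\delta_2-\psi\bigr)$ trades that structural fact for a direct $C^{2}$ estimate on the correction, and then recovers subharmonicity on the annulus from $\Delta\psi=4$ dominating the small perturbation; this is more elementary and entirely self-contained. Your objection to the regularised-max route is in fact well taken: with $\alpha=\psi$ and $\beta=(1-\delta_1)\phi-\delta_2$ one has $\alpha(0)=0>\beta(0)+a=-a$, so Lemma~\ref{lem:regmax} forces $\phi^{\epsilon}=\alpha=\psi$ near the origin rather than $\beta$, and the paper's sentence deducing $\phi^{\epsilon}=\beta$ on $B_r$ from $\beta<\psi-a$ is a slip (it invokes the wrong case of the lemma). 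Your approach sidesteps this cleanly; the only care required, which you correctly flag, is choosing $\delta_2\ll r^{2}$ after $r$ is fixed so that the $\delta_2\,\Delta\chi$ term does not spoil either the $C^{2}$ bound or the positivity of $\Delta\phi^{\epsilon}$ on the transition annulus.
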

\begin{proof}
  Let $\epsilon>0$ be small.   Write $\phi(z) = |z|^2+ g(z)$ where $g(z) = O(|z|^3)$, so say $|g(z)|<C|z|^2$ on the unit ball.  Fix a small $R\in (0,1/5)$ so that $\|g\|_{C^2}<\epsilon/20$ on $B_R$.    By shrinking $R$ if necessary we may assume also that $CR<\epsilon/20$.    We observe for later use the $C^2$-bound actually implies $|g'(z)|\le \epsilon R/20$ on $B_R$.  

Now let $\delta_1:=\epsilon/20$ and $\delta_2 := \epsilon R^2<\epsilon/20$ and $a:=\delta_2/2$.   Fix a smooth non-negative bump function $f$ with unit integral and support in $[-a,a]$ and $\|f\|_{C^2}\le 2a^{-1}$.  Putting $\alpha = \psi$ and 
$$ \beta = (1-\delta_1)\phi - \delta_2$$
into Lemma \ref{lem:regmax} consider, the function on  $B_R$ given by
$$\phi^{\epsilon} = \int_{\lambda} \max\{ \psi, (1-\delta_1)\phi - \delta_2+ \lambda\} f(\lambda) d\lambda$$
is smooth.  Moreover it is strictly subharmonic since both $\psi$ and $\phi$ are.

If $|z|=R$ then $(1-\delta_1)|g(z)|< 2CR^3 \le \epsilon R^2/20 = \delta_1 R^2$, and so
$$\alpha(z) = \psi(z) = R^2 > (1-\delta_1) (R^2 - g(z)) - \delta_2/2 = \beta(z)+a\quad \text{ on } \partial B_R$$
Thus $\alpha>\beta$ on a neighbourhood of $\partial B_R$, and so  by Lemma \ref{lem:regmax} $\phi^{\epsilon}  = \alpha= \psi$ on this neighbourhood.   Hence by declaring $\phi^{\epsilon}$ to be equal to $\psi$ outside of $B_R$ we get that $\phi^{\epsilon}$ is a smooth strictly subharmonic function on all of $\mathbb C$, and (2) certainly holds.  We next notice that $(1-\delta_1)\phi(0) - \delta_2 = -\delta_2=-2a= \psi(0) - 2a$.  Thus there exists a small ball $B_r$ around $a$ for which $\beta = (1-\delta_1)\phi - \delta_2 < \psi -a$, and so $\phi^{\epsilon} = \beta$ on $B_r$ by Lemma \ref{lem:regmax} which gives (1).  

It remains to bound the $C^2$ norm of $\phi^{\epsilon} - \psi$.   Observe that there is nothing to show outside of $B_R$.  On the other hand on $B_R$ we have
$$ \alpha-\beta = \psi - ((1-\delta_1) \phi - \delta_2) = \delta_1 |z|^2 - (1-\delta_1) g(z) + \delta_2.$$
Thus the $C^2$-norm of $\alpha-\beta$ is bounded by $4\delta_1 + 2\epsilon/20 + \delta_2<\epsilon/2$.  Moreover since $|g'(z)|\le \epsilon R/20$ on $B_R$ we have
$$ |d\alpha -d\beta| \le \delta_1 3R + 2\epsilon R/20 \le \epsilon R/4$$

Thus as $\|f\|_{C^0}\le 2a^{-1} = 2/\delta_2$, Lemma \ref{lem:regmax} gives

$$ \|\alpha-\beta\|_{C^2} \le \epsilon/2 + (\epsilon R/4)^2 \|f\|_0 \le \frac{\epsilon}{2}  + \frac{\epsilon^2 R^2}{8 \delta_2} = \epsilon/2  + \frac{\epsilon^2 R^2}{8 \epsilon R^2} < \epsilon$$
which gives (3).
\end{proof}

\begin{proof}[Proof of Proposition \ref{prop:hmae}]
Let $\psi$ be as above, so is an $S^1$-invariant potential.  Then the function $$F_0(z,\theta) := \psi(z) - \phi_{FS}(z)$$ is independent
  of $\theta$, and so there certainly exists a regular solution to the HMAE
  with boundary data given by 
$$ \omega_{\tau} : = \omega_{FS} + i\partial \bar{\partial} F_0 = i\partial \bar{\partial} \psi$$
namely the trivial one obtained by pulling $i\partial \bar{\partial} \psi$ to the product.
Thus by Donaldson's Openness Theorem there exists an $\epsilon>0$ and a regular solution to the HMAE with boundary data determined by any smooth function  $G$ with $\|G-F_0\|_{C^2}<\epsilon$.

Now let $\phi^{\epsilon}, \delta_1,\delta_2,r$ be as in Lemma \ref{lem:max} and set 
$$\tilde{F}(z,\theta) := \phi^{\epsilon}(\theta z) - \phi_{FS}(z).$$

Then by construction $\tilde{F}$ is invariant under the action of $\rho$ and by the Lemma $\|\tilde{F} - F_0\|_{C^2} = \|\phi^{\epsilon} - \psi\|_{C^2}<\epsilon.$  Hence there is a solution to the HMAE with boundary data $\omega_{FS} + i\partial \partial \tilde{F}_{\tau}$.  Moreover by the properties of $\phi^{\epsilon}$ we know there is a $B_r$ so that $\phi^{\epsilon} = (1-\delta_1) \phi + \delta_2$ on $B_r$.  So letting $c= (1-\delta_1$, this says precisely that $\omega_{FS} + i \partial \bar{\partial} \tilde{F}$ has local potential $c\phi(\theta z)$ for $(z,\theta)\in B_r\times S^1$
\end{proof}

\section[Holomorphic curves]{Holomorphic curves with boundaries in a totally real submanifold} 

To continue to exploit our connection between the Hele-Shaw flow and the theory of holomorphic discs we now summarize the parts of the literature that we shall need.

\subsection{The moduli space}
Let $M$ be a complex manifold of complex dimension $n.$ A real submanifold $\Lambda$ is said to be \emph{totally real} if $T\Lambda\cap J(T\Lambda)=\{0\},$ where $J$ denotes the complex structure on $M$.   A totally real submanifold is said to be \emph{maximal} if it has real dimension $n$. 

Let $(\Sigma,\partial \Sigma)$ be a Riemann surface with boundary.  A holomorphic non-const\-ant map $f\colon \Sigma \to M$ is called a \emph{holomorphic curve} in $M$ if $f$ extends to a $C^1$ function on $\partial \Sigma$.  If $f$ is an embedding then we call the image of $f$ an \emph{embedded holomorphic curve}.  By abuse of terminology we shall refer to this image as $\Sigma$, and say that it has boundary in $\Lambda$ if $f(\partial \Sigma)\subset \Lambda$.  In the special case that $\Sigma$ is the unit disc $D\subset \mathbb C$ and $f(\partial D)\subset \Lambda$ we call the image an \emph{embedded holomorphic disc} with boundary in $\Lambda$. By the smooth Riemann mapping theorem \cite[Thm 3.4]{Cho}, one can equivalently replace the disc $D$ with any smooth Jordan domain.

The moduli space of holomorphic curves with boundary in a totally real maximal submanifold has been studied by numerous authors in different contexts, e.g. Bishop \cite{Bishop}, Donaldson \cite{Donaldson}, Forstneri\v c \cite{Forstneric} and Gromov \cite{Gromov}. In complex analysis this is related to polynomial hulls and the homogeneous Monge-Ampère equation \cite{Donaldson}, while in symplectic topology it is used in the study of Lagrangian submanifolds \cite{Gromov}. More recently LeBrun and Mason \cite{LeBrun2, LeBrun} have shown a connection to twistor theory.  \medskip

The first step in this theory is to give the space of all (not necessarily holomorphic) embedded curves in some regularity class with boundary in $\Lambda  $ the structure of a Banach manifold $\mathcal{B}$.   Let $\Sigma$ be such an embedded curve, let $N$ denote the normal bundle and let $E$ be the normal bundle of $\partial \Sigma$ in $\Lambda$. We can identify $E$ with a totally real subbundle of $N$ restricted to $\partial \Sigma.$ Locally around $\Sigma$ the Banach manifold $\mathcal{B}$ is modelled on the Banach space $C^{k+1,\alpha}(\Sigma, N,E)$ of sections of $N$ with boundary values in $E$ (this construction uses the exponential map with respect to some appropriately chosen metric, see \cite{LeBrun}).

The next step is to consider a Banach vector bundle $\pi: \mathcal{E}\to \mathcal{B}$ whose fibre over $\Sigma$ is \[C^{k,\alpha}(\Sigma,\Lambda^{0,1}\otimes N)\] and a canonical section $\mathcal{S}:\mathcal{B}\to \mathcal{E}$ whose zero locus $\mathcal M:=\mathcal{S}^{-1}(0)$ is the space of holomorphic curves. The implicit function theorem for Banach spaces says that $\mathcal{M}$ is a smooth manifold provided that the graph of 
$\mathcal{S}$ is transverse to the zero section. This is so if and only if at any point $\Sigma$ in $\mathcal{M},$ the linearisation $D_{\Sigma},$ which by definition is $D\mathcal S$ composed with the projection to the fibre of $\mathcal{E},$ is surjective and has a right inverse.  Now $D_{\Sigma}$ is nothing but the Cauchy-Riemann operator $\bar{\partial}$ that maps sections of $N$ with boundary values in $E$ to $(0,1)$-forms with values in $N.$ It is well-known that this operator is Fredholm, so what remains to establish transversality of $\mathcal{S}$ is surjectivity. Thus if the Cauchy-Riemann operator is surjective, then $\mathcal{M}$ is a smooth manifold and its dimension is given by the index of $\bar{\partial}.$ 

In general the problem of proving transversality involves picking a new generic almost complex structure on the manifold $M$.  However for our purposes $M$ has complex dimension 2 and then the problem becomes significantly easier and, as we shall see, reduces to the computation of a single topological invariant of a related holomorphic curve.

\subsection{Dimension count and the Schottky double}

Following LeBrun in \cite{LeBrun}, we now discuss how to use the Schottky double to compute the dimension of the moduli space $\mathcal M$ of embedded holomorphic curves in a manifold $M$ of complex dimension $2$.

Let $\Sigma$ be a compact Riemann surface with boundary $\partial \Sigma.$ By gluing it together with its complex conjugate $\bar{\Sigma}$ along $\partial \Sigma$ we get a closed Riemann surface $\mathcal{X}$ (assuming some regularity of $\Sigma$, see \cite{LeBrun}). In the theory of quadrature domains $\bar{\Sigma}$ is called the Schottky double of  $\Sigma$ \cite{Gustafsson4}. The complex conjugate $\bar{N}$ of $N$ is a holomorphic vector bundle on $\bar{\Sigma},$ and by gluing $N$ and $\bar{N}$ along $\partial \Sigma$ so that $E=\bar{E}$ we get a holomorphic vector bundle $\mathcal{N}$ on $\mathcal{X}.$

\begin{figure}[htb]
\includegraphics[width=0.8\textwidth, trim=200 200 200 200]{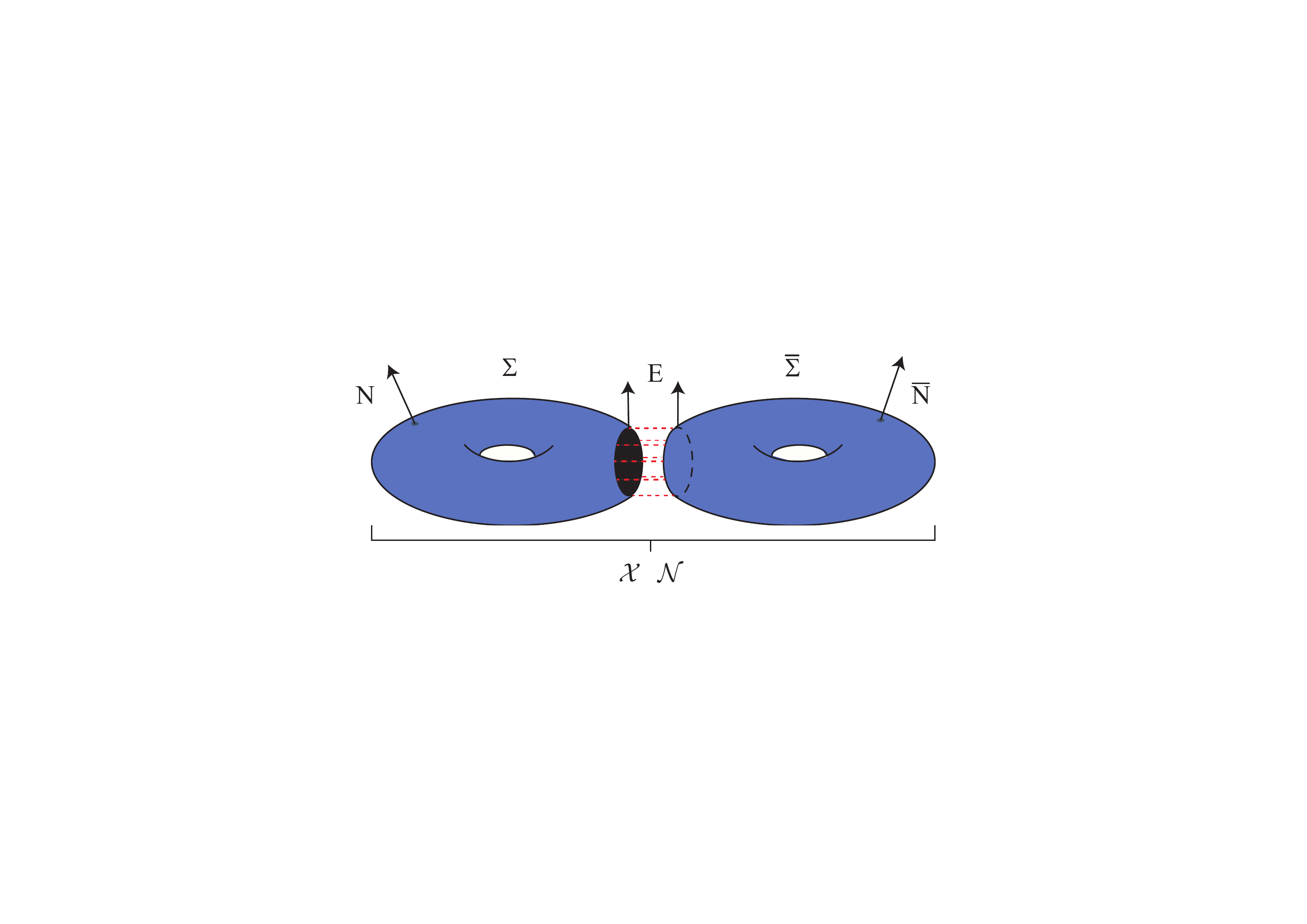}  
\centering
\caption{The Schottky double}
\end{figure}
There is an antiholomorphic involution $\rho$ on $\mathcal{X}$ which lifts to $\mathcal{N}$ such that $E$ is the fixed point set of $\rho.$   As noted in \cite{LeBrun}, this implies that $E$ is real analytic as a subspace of $\mathcal{N}.$   This involution thus acts on each of the vector spaces
\[C^{k+1,\alpha}(\Lambda,\mathcal{O}(\mathcal{N}))\text{ and } H^p(\mathcal{X},\mathcal{O}(\mathcal{N})) \text{ for } p=0,1.\]
Let $C^{k+1,\alpha}_{\rho}(\mathcal{X},\mathcal{O}(\mathcal{N}))$ and $H^p_{\rho}(\mathcal{X},\mathcal{O}(\mathcal{N}))$ denote the invariant subspaces. Thus
\[H^p(\mathcal{X},\mathcal{O}(\mathcal{N}))=H^p_{\rho}(\mathcal{X},\mathcal{O}(\mathcal{N}))\oplus iH^p_{\rho}(\mathcal{X},\mathcal{O}(\mathcal{N})).\]

LeBrun shows in \cite{LeBrun} that $$C^{k+1,\alpha}(\Sigma, N, E) \cong C^{k+1,\alpha}_{\rho}(\mathcal{X},\mathcal{O}(\mathcal{N}))$$ with a canonical isomorphism. Also \cite[Lemma 1]{LeBrun} the kernel of the Cauchy-Riemann operator is canonically isomorphic to $H^0_{\rho}(\mathcal{X},\mathcal{O}(\mathcal{N}))$ while the cokernel is canonically isomorphic to $H^1_{\rho}(\mathcal{X},\mathcal{O}(\mathcal{N})).$ Thus $\bar{\partial}$ is surjective if and only if  $h^1(\mathcal{X},\mathcal{O}(\mathcal{N}))=0,$ and then the index is equal to $h^0(\mathcal{X},\mathcal{O}(\mathcal{N})).$

Now suppose that the complex dimension of the ambient manifold $M$ is two.  Then $\mathcal{N}$ is a complex line bundle, and so as long as $\deg \mathcal N>2g-2$, where $g$ is the genus of $\mathcal{X}$, then the numbers $h^0(\mathcal{X},\mathcal{O}(\mathcal{N}))$ and $h^1(\mathcal{X},\mathcal{O}(\mathcal{N}))$ are topological invariants that only depend on $g$ and the degree of $\mathcal{N}$.

If $\Lambda$ happens to be the fixed point set of an antiholomorphic involution $\rho$ of $M,$ then the Schottky double $\mathcal{X}$ of $\Sigma$ can be identified with the Riemann surface $\Sigma\cup \rho(\Sigma),$ and $\mathcal{N}$ is then identified with the normal bundle of  $\Sigma\cup \rho(\Sigma)$ in $M$.  Hence a strategy to compute the numbers $h^0(\mathcal{X},\mathcal{O}(\mathcal{N}))$ and  $h^1(\mathcal{X},\mathcal{O}(\mathcal{N}))$ for a general pair $(\Sigma,\Lambda)$ in the four dimensional case is to continuously perturb $(\Sigma,\Lambda)$ to some $(\Sigma',\Lambda')$ where $\Lambda'$ is the fixed point set of an involution $\rho,$ and then use the adjunction formulae to compute the degree of the normal bundle of $\Sigma'\cup \rho(\Sigma').$

\section{Moduli spaces of quadrature domains}\label{sec:quadrature}

We digress in this section from the Hele-Shaw flow, and consider the above setup in the case $\Lambda=\{(z,\bar{z})\}$ which is related to the concept of quadrature domains.

\begin{definition}
A bounded domain $\Omega$ in $\mathbb{C}$ is called a \emph{quadrature domain} if there exist finitely many points $a_1,...,a_m\in \Omega$ and coefficients $c_{kj}$ such that for any integrable holomorphic function $f$ on $\Omega$ the quadrature identity holds: 
\[\int_{\Omega}f dA=\sum_{k=1}^{m}\sum_{j=0}^{n_k-1}c_{kj}f^{(j)}(a_k).\]
 The integer $n=\sum_{k=1}^m n_k$ is called the \emph{order} of the quadrature domain (assuming $c_{k,n_k-1}\neq 0$).    We say that a quadrature domain is smooth if its boundary is smooth.   We also recall that the \emph{connectivity} $c$ of a domain $\Omega$ is the number of connected components of $\partial \Omega$.
\end{definition}

Assume that $(\Omega,\phi(z) = |z|^2)$ has a Schwarz function $S$.   Then 
\begin{eqnarray*}\int_{\Omega}fdA=-\frac{i}{2}\int_{\Omega}fd(\bar{z}dz)&=&-\frac{i}{2}\int_{\partial \Omega}f\bar{z}dz\\&=&-\frac{i}{2}\int_{\partial \Omega}fSdz=-\frac{i}{2}\int_{\Omega}f\bar{\partial}Sdz\\&=&\sum_{k=1}^{m}\sum_{j=0}^{n_k-1}c_{kj}f^{(j)}(a_k),
\end{eqnarray*}
where $a_k$ are the poles of $S$ and $n_k$ is the order of the pole at $a_k$, and thus $\Omega$ is a quadrature domain.   It is proved in \cite[Lemma 2.3]{Aharanov} that the converse holds as well.   Clearly then the order of the domain $n$ equals the number of poles of $S$ counted with multiplicity. 

It is also clear that Schwarz function $S$ is uniquely determined by the domain, since if we had two Schwarz functions, their difference would be a meromorphic function that was zero on the boundary, and therefore this difference must be zero.\medskip

From now on assume that the quadrature domain $\Omega$ has smooth boundary. Consider the graph \[\Sigma:=\{(z,S(z)): z\in \Omega\}\] as an embedded holomorphic curve in $\mathbb C\times \mathbb{P}^1.$ It is attached along its boundary to the totally real submanifold 
\[\Lambda=\{(z,\bar{z}): z\in \mathbb{C}\}.\] Since $\Lambda$ is the fixed point set of the antiholomorphic involution $\rho: (z,w)\mapsto (\bar{w},\bar{z}),$ the Schottky double $\mathcal{X}$ is naturally identified with $\Sigma\cup \rho(\Sigma)$ and the normal bundle $\mathcal{N}$ is identified with the normal bundle of  $\Sigma\cup \rho(\Sigma).$  

We wish to calculate the degree of $\mathcal{N}.$ By the adjunction formula,
\[\mathcal{N}=\mathcal O(\mathcal{X})_{|\mathcal{X}}=(K_{\mathbb{P}^1\times \mathbb{P}^1})^*_{|\mathcal{X}}\otimes K_{\mathcal{X}}\]
where $\mathcal O(\mathcal X)$ denotes the line bundle associated to the divisor $\mathcal X$.  The line bundle $K_{\mathbb{P}^1\times \mathbb{P}^1}$ is represented by the divisor $-2(H_1+H_2)$ where $H_1$ and $H_2$ denote the hyperplanes $\mathbb{P}^1\times\{\infty\}$ and $\{\infty\}\times \mathbb{P}^1$ respectively. Thus 
\[\textrm{deg}\,\mathcal{N}=2(H_1+H_2)\cdot \mathcal{X}+\textrm{deg}\,K_{\mathcal{X}}.\]
where the dot represents the intersection number of the relevant divisors.   Clearly 
\[H_1\cdot \mathcal{X}=H_2\cdot \mathcal{X}=n,\] and \[\textrm{deg}\,K_{\mathcal{X}}=2g-2,\] where $g$ denotes the genus of $\mathcal{X}.$ Combined this yields that \[\textrm{deg}\,\mathcal{N}=4n+2g-2.\]  Now $n>0$, so  $\textrm{deg}\,\mathcal{N}\geq 2g+2$ which implies $h^1(\mathcal{X},\mathcal{N})=0$, and  by Riemann-Roch that \[h^0(\mathcal{X},\mathcal{N})=\textrm{deg}\,\mathcal{N}-g+1=4n+g-1.\] Finally note that the genus $g$ is precisely $c-1,$ where $c$ is the connectivity number of $\Omega.$ 

Hence the moduli space of embedded holomorphic curves with boundaries in $\Lambda$ is a smooth manifold of real dimension $4n+c-2$ near $\Sigma.$ Any such holomorphic curve lying close enough to $\Sigma$ gives a Schwarz function of a smoothly bounded quadrature domain. By the fact that all quadrature domains have unique Schwarz functions (again with respect to $\phi(z) = |z|^2$), it follows that the moduli space of smoothly bounded quadrature domains of order $n>0$ and connectivity $g$ is a manifold of real dimension $4n+c-2.$ We have thus proved Theorem \ref{quadraturetheorem} as stated in the introduction.

\section[Short-time existence]{Short-time existence of the Hele-Shaw flow}\label{sec:proofshorttime}

This section completes the proof of Theorem \ref{thm:heleshawmain}.   To ease notation let  $\Omega=\Omega_0$ be a smooth Jordan domain. We saw above that we could construct a generalised Schwarz function $S$ of the pair $(\Omega,\phi)$ where $\Delta \phi=1/\kappa$ on the complement of $\Omega$ and $\phi$ extends smoothly to a neighbourhood of the complement. If $\Sigma$ denotes the graph of $S$ and $\Lambda$ the graph of $\frac{\partial \phi}{\partial z}$ we have that $\Sigma$ is a holomorphic disc whose boundary lies in the totally real submanifold $\Lambda$.

We want to analyse the moduli space of holomorphic discs near $\Sigma$ with boundary in $\Lambda$ that intersects the hyperplane $H_1=\mathbb{P}^1\times \{\infty\}$ at the point $(0,\infty).$ 

First we consider the moduli space of all such discs, irrespective of their intersection with $H_1$.  We need to calculate the numbers $h^0(\mathcal{X},\mathcal{N})$ and $h^1(\mathcal{X},\mathcal{N}).$ As we continuously deform the domain $\Omega$ to the unit disc $\Omega'$, the pair $(\mathcal{X},\mathcal{N})$ and submanifold $\Lambda$ continuously deform to the pair $(\mathcal{X}',\mathcal{N}')$ and to $\Lambda'=\{(z,\bar{z}): z\in \mathbb{C}\}$ respectively. Thus the Schwarz function for $\Omega'$ is simply $\frac{1}{z},$ i.e. the usual (interior) Schwarz function of the unit disc thought of as a quadrature domain. Since the numbers $h^0(\mathcal{X},\mathcal{N})$ and $h^1(\mathcal{X},\mathcal{N})$ are invariant under continuous deformation, the calculation in the previous section yields that $h^1(\mathcal{X},\mathcal{N})=0$ and $h^0(\mathcal{X},\mathcal{N})=3,$ since $n=1$ and $g=0.$ Thus the required moduli space is a smooth manifold of real dimension 3 near $\Sigma$.

Considering moduli spaces of holomorphic curves passing through a given point is also standard in the theory (see, for example, \cite{McDuff}). We simply replace our Banach manifold $\mathcal B$ with the subspace  of curves going through the point, which is locally modelled on the Banach subspace of sections to the normal bundle that vanish at that point. We will denote this space by $C^{k+1,\alpha}(\Sigma, N, E)_0$. Following \cite{LeBrun}  it follows that this space is naturally isomorphic to the space of $\rho$ invariant sections of $\mathcal{N}$ that vanish at both $(0,\infty)$ and $\rho((0,\infty)).$ We denote this space by $C^{k+1,\alpha}_{\rho}(\mathcal{X},\mathcal{N})_0.$ The section $\mathcal{S}$ and its linearisation $\bar{\partial}$ are simply the restrictions of the original ones. 

We check first that $\bar{\partial}$ is still surjective. Pick an arbitrary element $$g\in C^{k,\alpha}(\Sigma,\Lambda^{0,1}\otimes N).$$ By the above there exists a section $f\in C^{k+1,\alpha}(\Sigma,N,E)$ such that $\bar{\partial} f=g.$ Since the degree of $\mathcal{N}$ is positive and $\mathcal{X}$ is isomorphic to $\mathbb{P}^1$ we know that there is a holomorphic section, $s$ say,  of $\mathcal{N}$ that is equal to $f$ at $(0,\infty)$ but zero at $\rho((0,\infty)).$  Then the section $s+\rho(s)$ lies in $H^0_{\rho}(\mathcal{X},N),$ and thus corresponds to an element $\tilde{s}$ in the kernel of $\bar{\partial}.$ We have that $\tilde{s}=f$ at the point $(0,\infty).$ Thus $f-\tilde{s}$ lies in $C^{k+1,\alpha}(\Sigma,N,E)_0$ and $\bar{\partial}(f-\tilde{s})=g.$ This shows that the restriction of $\bar{\partial}$ is still surjective.      

The kernel of the restricted Cauchy-Riemann operator is of course the intersection of the original kernel with the subspace $C^{k+1,\alpha}(\Sigma, N, E)_0.$ Under the natural isomorphism this equals the intersection of $H^0_{\rho}(\mathcal{X},N)$ and $C^{k+1,\alpha}_{\rho}(\mathcal{X},\mathcal{N})_0.$ Let $H^0(\mathcal{X},N)_0$ denote the subspace of holomorphic sections that vanish at both $(0,\infty)$ and $\rho((0,\infty)).$ The involution $\rho$ acts on this space by \[\rho(s)(x):=\rho(\bar{s(\rho(x))}),\] and if $H^0_{\rho}(\mathcal{X},N)_0$ denotes the invariant real subspace we have that 
\begin{equation} \label{eqlast}
H^0(\mathcal{X},N)_0\equiv H^0_{\rho}(\mathcal{X},N)_0\oplus iH^0_{\rho}(\mathcal{X},N)_0.
\end{equation} 
Clearly $H^0(\mathcal{X},N)_0$ is the intersection of $H^0_{\rho}(\mathcal{X},N)$ and $C^{k+1,\alpha}_{\rho}(\mathcal{X},\mathcal{N})_0,$ and from (\ref{eqlast}) we get that the real dimension of $H^0_{\rho}(\mathcal{X},N)_0$ is equal to the complex dimension of $H^0(\mathcal{X},N)_0.$ Since the degree of $\mathcal{N}$ is two and $\mathcal{X}\cong \mathbb{P}^1$ the complex dimension of $H^0(\mathcal{X},N)_0$ is one. Thus the index of the restricted Cauchy-Riemann operator is one.

Together with the surjectivity this implies that the moduli space of nearby holomorphic discs with boundary in $\Lambda$ and going through $(0,\infty)$ is a smooth manifold of real dimension one.   Thus $\Sigma_0=\Sigma$ varies in a smooth family $\Sigma_t$ for $t$ sufficiently small, and by Theorem \ref{thm:holomorphicdiscs} it remains to prove that the family of domains  $\Omega_t = \pi(\Sigma_t)$ is increasing.

The tangent space of the family of discs at $\Sigma$ is given by $H^0_{\rho}(\mathcal{X},N)_0$ restricted to $\Sigma.$ Since a non-zero element in $H^0_{\rho}(\mathcal{X},N)_0$ vanishes only at the points $(0,\infty)$ and $\rho((0,\infty))$ it is nonvanishing along the boundary of $\Sigma.$ The real subspace $E$ is one-dimensional and is has one direction going out of $\Omega_0$ and one direction going in. The nonvanishing implies that a nonzero tangent points either outwards everywhere or inwards everywhere. Thus (replacing $t$ with $-t$ if necessary) the family $\Omega_t$ is increasing.

\begin{remark}
One sees that in fact we have proved slightly more than claimed, namely that exists a smooth positive function $\kappa'$ defined in a neighbourhood of $\partial \Omega_0$ with $\kappa=\kappa'$ outside of $\Omega_0,$ and an $\epsilon >0$ such that there exists a smooth increasing family of domains $\{\Omega_t\}_{t\in (-\epsilon,\epsilon)}$ which solves the Hele-Shaw flow with varying permeability $\kappa'.$    This extension $\kappa'$ is not unique, but a given choice uniquely determines the flow, in the sense that any two such flows agree on their common domain.  

In fact if $\kappa\equiv 1$ and the initial domain is simply connected (which we always assume is the case) then it is known that the boundary of the domain being real analytic is necessary for there to be a solution backwards in time (see \cite{Tian2} or \cite[Thm 10]{Gustafsson2}).   Thus we see that this issue can be circumvented if we modify the permeability appropriately (although we have little control on this modification).
\end{remark}

\section{The inverse potential problem and moment flows}

Let $\rho$ be a density function on $\mathbb{C}$ (so we have changed notation from previous sections in which this density was played by $1/\kappa$).  The associated potential of a domain $\Omega$ is the subharmonic function $$U^{\Omega}(z):=\int_{\Omega} \log|z-w|^2\rho(w)dA(w)$$
where $dA$ is the Lebesgue measure.  Note that $U^{\Omega}$ is harmonic on the complement of $\Omega.$ Let $D$ be some big disc containing $\Omega$. The inverse potential problem asks whether a given harmonic function $U$ on the complement of $D$ (with proper growth at infinity) can be realised (uniquely) as the potential of a domain $\Omega$ in $D.$ \medskip

To see the relationship with the complex moments consider
\[G:=\frac{\partial U}{\partial z},\] 
which is holomorphic if $U$ is harmonic.  If $U=U^{\Omega}$ for some domain $\Omega$ then 
\begin{equation} \label{potential}
G(z)=\frac{\partial}{\partial z} \int_{\Omega} \log|z-w|^2\rho(w)dA(w)=\int_{\Omega}\frac{\rho(w)}{z-w}dA(w).
\end{equation} 
Now for large $|z|$ we can write this integral as a convergent power series in $z^{-k},$ $$\int_{\Omega}\frac{\rho(w)}{z-w}dA(w)=\sum_{k=1}^{\infty}a_k z^{-k},$$ where $$a_k:=\int_{\Omega}w^{k-1}\rho(w)dA(w).$$ Thus the coefficients $a_k$ of the power series of $G$ at infinity equals the complex moments of $\Omega$ with respect to the measure $\rho dA.$      Thus the inverse potential problem be restated as asking for a (unique) domain with prescribed complex moments.\medskip

This potential problem has a rich history, both for the plane and for higher dimensions, and also for more complicated kernels replacing the expression $\ln |z-w|^2$ above (see, for example, \cite{Isakov} and the references therein).  For this particular problem in the plane  it is known that prescribing the complex moments does not uniquely determine the domain (although it appears to be unknown if this uniqueness can fail for domains with smooth boundary).  However starting with a suitable smooth domain $\Omega_0$, for small variations $U_t$ of the potential $U^{\Omega_0}$ there is a unique $\Omega_t$ close to $\Omega$ such that $U_t = U^{\Omega_t}$.  Thus one can think of the set of moments as giving local parameters for the space of domains.  We refer the reader to \cite[Sec 2.4]{Isakov} and \cite[Sec. 2]{Cherednichenko} for precise statements.  We also point out the interesting work of Wiegmann and Zabrodin in this context, which connects these parameters to an integrable system coming from the dispersionless Toda lattice \cite{Wiegmann}.  \medskip

We have seen that the Hele-Shaw flow can be phrased as a local existence question related to the inverse potential problem.    Namely, if $\Omega$ is a smooth Jordan domain containing the origin, then there is a non-trivial one parameter family of smooth domains $\Omega_t,$ all having the same higher moments as $\Omega$ and whose area increases linearly.     In this section we will prove short-time existence of a similar flow in which all the higher moments are allowed to vary linearly.    Variants of the Hele-Shaw flow under some other force (e.g.\ gravity) are considered by Escher-Simonett \cite{Escher2, Escher3,Escher} and this gives a physical interpretation to some of these variations.  

\begin{theorem}\label{thm:momentflowlinear}
Let $\Omega$ be a smooth Jordan domain, and $V$ be a smooth, nowhere vanishing outward pointing normal vector field on $\partial \Omega$.   Then there exists a smooth variation $\Omega_t$ for $t\in [0,\epsilon)$ for some $\epsilon>0$ such that $$\frac{d}{dt}\big\vert_{t=0}\partial \Omega_t=V$$ whose moments vary linearly in $t$.
\end{theorem}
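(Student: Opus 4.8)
The plan is to reduce the statement, exactly as in Sections~\ref{sec:quadrature} and~\ref{sec:proofshorttime}, to the existence of a one-parameter family of holomorphic discs. First I would record the analogue of Proposition~\ref{prop:schwarzheleshaw}: if $\psi$ is a smooth strictly subharmonic function defined on a neighbourhood of the closure of the complement of $\Omega$, with $\Delta\psi$ equal to the relevant density $\rho$ there, and if $\{\Omega_t\}_{t\in[0,\epsilon)}$ is a smooth increasing family of smooth Jordan domains each admitting a Schwarz function $S_t$ of $(\Omega_t,\psi)$ that is holomorphic except for a simple pole at a fixed interior point, then the Stokes-theorem computation of that proposition goes through verbatim and shows the higher complex moments are constant while the area is linear; hence such a flow has moments varying linearly in $t$. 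The extra freedom we exploit, relative to Section~\ref{sec:proofshorttime}, is that here $\psi$ is only required to be defined on the \emph{outer} side of $\partial\Omega$, and so may be modified by the real part of any function holomorphic on the complement of $\Omega$: such a modification is invisible to the moment computation but changes the totally real submanifold $\Lambda=\{(z,\partial\psi/\partial z)\}$, and hence the distinguished monotone flow attached to it. So it suffices to produce, from $V$, such a $\psi$ together with a Schwarz function $S_0$ of $(\Omega,\psi)$ with a simple pole at $0$, whose lifted disc $\Sigma_0=\{(z,S_0(z))\}\subset\mathbb C\times\mathbb P^1$ is attached to $\Lambda$, passes through $(0,\infty)$, and whose monotone deformation has boundary velocity $V$.

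The construction of $(\psi,S_0)$ from $V$ is the heart of the matter, and is where the hypothesis of smoothness rather than real analyticity enters. I would run the Cauchy-integral/Sokhotski--Plemelj argument of Proposition~\ref{prop:existenceschwarz2}, but starting not from $\partial\phi_0/\partial z$ alone but from a smooth function $g$ on $\partial\Omega$ obtained by adding to it a correction term manufactured from $V$ (roughly, $V$ normalised by the Poisson kernel of $\Omega$ and by $\rho$, transported to a function along $\partial\Omega$). Forming the Cauchy transform of $g$, splitting it into its interior part $f_+$ and exterior part $f_-$, and peeling off the $a/z$ term of $f_-$ exactly as there yields $\psi$ (differing from a chosen primitive of $\rho$ by the real part of a function holomorphic off $\Omega$, so $\Delta\psi$ is still $\rho$) together with $S_0=f_+-a/z$. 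Because $g$ is merely smooth, $f_\pm$ extend only smoothly, not holomorphically, across $\partial\Omega$ — which is exactly the regularity the embedded-disc theory is built to tolerate. The correction term is to be chosen so that the normal jump of $\partial\psi/\partial z-S_0$ across $\partial\Omega$ — the quantity which, after differentiating the Schwarz condition along the flow, governs the normal boundary velocity of the monotone family — is precisely $V$; this amounts to a Riesz-type splitting of the smooth boundary function carrying $V$ into its interior- and exterior-holomorphic components, the former absorbed into $S_0$ and the latter into $\Lambda$.

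Once the initial data is fixed, the family $\Sigma_t$ is produced exactly as in Section~\ref{sec:proofshorttime}. Since $\Lambda$ is again the graph of $\partial\psi/\partial z$ for a strictly subharmonic $\psi$, the Schottky double $\mathcal X$ of $\Sigma_0$ and the line bundle $\mathcal N$ behave as in that section: with genus $g=0$ and a single pole one has $\deg\mathcal N=4\cdot 1+2g-2=2$, so after imposing passage through $(0,\infty)$ (and its conjugate) the associated Cauchy--Riemann operator is surjective with one-dimensional kernel, the moduli space of such discs is a smooth $1$-manifold near $\Sigma_0$, and the corresponding normal section is nonvanishing along $\partial\Sigma_0$. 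Hence $\Omega_t:=\pi_1(\Sigma_t)$ is, after replacing $t$ by $-t$ if necessary, an increasing family of smooth Jordan domains with $\frac{d}{dt}\big\vert_{t=0}\partial\Omega_t=V$, and by the first paragraph its moments vary linearly in $t$.

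The step I expect to be the genuine obstacle is the middle one: one must check that the correction term built from $V$ can be inserted into the Sokhotski--Plemelj construction without destroying strict subharmonicity of $\psi$ or embeddedness of $\Sigma_0$ (both open conditions, so really a matter of keeping the correction small, or, if $V$ is large, rescaling and reparametrising time), and — the real point — that it produces \emph{exactly} the prescribed boundary velocity. Equivalently, one has to show that the assignment from admissible pairs $(\psi,S_0)$ to the boundary velocity of the resulting monotone flow hits every smooth nowhere-vanishing outward normal field on $\partial\Omega$; this is the analytic content of the theorem and the reason a naive attempt to solve the moving-boundary problem directly, which would force the relevant Cauchy data to be real analytic, does not suffice.
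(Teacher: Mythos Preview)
Your plan has a genuine gap at its core. You assert that adding to $\psi$ the real part of a function holomorphic on the complement of $\Omega$ ``changes the distinguished monotone flow attached to it'', but for the forward flow this is false. Any such modification leaves $\Delta\psi=\rho$ outside $\Omega$; since for $t\ge 0$ the set $\Omega_t\setminus\Omega$ lies in that region, the Stokes computation of Proposition~\ref{prop:schwarzheleshaw} still shows that the higher moments $\int_{\Omega_t}z^k\rho\,dA$ are constant in $t$. By Theorem~\ref{thm:momentsheleshaw} the projected family is therefore always the Hele-Shaw flow with permeability $1/\rho$, whose initial normal velocity is the fixed field $-\tfrac{1}{\rho}\nabla p_0$ regardless of the choice of $\psi$. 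Put differently: the one-parameter moduli space of discs through $(0,\infty)$ attached to a \emph{fixed} $\Lambda$ can only produce flows with constant higher moments, so it can never realise a $V$ whose exterior Cauchy data $f_-$ has nonzero coefficients $b_k$. (Concretely, if your correction term is the boundary value of a function holomorphic off $\Omega$, the Sokhotski--Plemelj splitting absorbs it and returns the same pair $(\psi,S_0)$; if it is not, then $\psi$ no longer satisfies $\Delta\psi=\rho$ outside $\Omega$ and your first paragraph's moment computation breaks.)

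The paper's argument does not fix $\Lambda$. It writes $V\rho\,ds=g(z)\,dz$ on $\partial\Omega$, lets $f_-$ be the exterior Cauchy transform of $g$, and considers the \emph{moving} family $\Lambda_s=\{(z,\partial\phi/\partial z+sf_-)\}$. A perturbation theorem (Theorem~\ref{perturbationtheorem}) then yields a two-parameter family $\Sigma_{s,t}$ of discs with $\partial\Sigma_{s,t}\subset\Lambda_s$, and one selects the diagonal $\Sigma'_s=\Sigma_{s,t(s)}$ on which the residue of the Schwarz function at $0$ is held fixed. The boundary identity $S_s=\partial\phi/\partial z+sf_-$ on $\partial\Omega_s$ now gives
\[\int_{\Omega_s}z^k\rho\,dA-\int_{\Omega_0}z^k\rho\,dA=-s\int_{\partial\Omega_s}z^kf_-\,dz,\]
which equals $-2\pi i\,b_{k+1}\,s$ once $\partial\Omega_s$ has left $\overline\Omega$. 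That the initial velocity of $\Omega_s$ is exactly $V$ is not arranged in advance but is \emph{deduced a posteriori} from Proposition~\ref{uniquenessvectorfields}: two normal fields inducing the same first-order moment variations must coincide. The outward-pointing hypothesis on $V$ is precisely what forces $\partial\Omega_s$ into the domain of holomorphy of $f_-$, making the last integral independent of $s$ and closing the linearity argument.
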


Note that without the assumption on the vector field $V$ pointing outwards this theorem is not true, for example in the Hele-Shaw flow when $\rho\equiv 1$ and $V=\nabla p,$ then this would mean solving the Hele-Shaw flow backwards in time, which is impossible unless the boundary of $\Omega$ is real analytic.\medskip

Of course the way in which the moments vary will depend on $V$.  To discuss this suppose $V$ is any normal vector field defined on the boundary of a smooth Jordan domain $\Omega.$   Write the measure $V\rho ds$ as the restriction of a complex one form $g(z)dz$ to the curve $\partial \Omega.$ The complex-valued function $g$ is then uniquely determined on $\partial \Omega.$ Consider the Cauchy integral $$f=f^{\Omega,V}=\int_{\partial \Omega}\frac{g(w)}{z-w}dw,$$ and as above let $f_+=f_+^{\Omega,V}$ denote the restriction of $f$ to $\Omega$ and $f_-=f_-^{\Omega,V}$ the restriction to the complement of $\overline{\Omega}.$ Recall that both $f_+$ and $f_-$ has a smooth extension to the boundary and \[f_+=g+f_- \quad \text{ on } \partial \Omega.\]

Now if $\Omega_t$ is a smooth variation of $\Omega$ with initial direction $V$, the variation of the complex moments $M_k(t)$ of $\Omega_t$ satisfy
$$\frac{dM_k}{dt}|_{t=0}=\int_{\partial \Omega}z^kV\rho ds.$$ 
Since $f_+$ is holomorphic in $\Omega$ it then follows that $$\frac{dM_k}{dt}|_{t=0}=-\int_{\partial \Omega}z^kf_-(z)dz.$$ Thus writing $f_-$ as a power series in a neighbourhood of infinity, $$f_-=\sum_{k=1}^{\infty}b_k z^{-k}$$ we conclude $$\frac{dM_k}{dt}|_{t=0}=-2\pi i b_{k+1}.$$  
We think of the condition that $f_-$ extend to an analytic function across $\partial \Omega$ as a kind of ``growth condition'' on the coefficients $b_k$ that determine the variation of the moments.  With this said we can prove something more precise:

\begin{theorem}\label{thm:momentflowlinear2}
Let $\Omega$ be a smooth Jordan domain.    Suppose complex coefficients $b_k$ for $k\ge 0$ are given with $ib_0\in \mathbb R$, so that the function $f_- = \sum_k b_k z^{-k}$ defined in a neighbourhood of infinity extends to a holomorphic function across $\partial \Omega$.  Then there exists a smooth density function $\rho'$ with $\rho'= \rho$ outside of $\Omega$ and a smooth family $\{\Omega_t\}_{t\in (-\epsilon,\epsilon)}$, whose moments $M_k(t)$ taken with respect to the measure $\rho' dA$ vary linearly as
\[ M_k(t) = M_k(0) - 2\pi i b_{k+1} t.\]
If moreover $\Omega$ has real analytic boundary and $\rho$ is real analytic then we can take $\rho'=\rho$.
\end{theorem}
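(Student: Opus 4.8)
The plan is to run the same Schwarz-function / holomorphic-disc machinery used for the Hele-Shaw flow, but now with a generalised Schwarz function that is allowed to acquire higher-order poles at the origin prescribed by the data $b_k$. First I would reduce Theorem \ref{thm:momentflowlinear2} to the construction of an initial lift. Given the coefficients $b_k$, form the holomorphic function $f_-(z) = \sum_k b_k z^{-k}$ near infinity; by hypothesis it extends holomorphically across $\partial\Omega$, so it defines a holomorphic function on $\mathbb C \setminus K$ for some compact $K \subset \Omega$. Choose a smooth function $\phi_0$ on $\mathbb C$ with $\Delta \phi_0 = \rho$ (recall the notational switch: the density here plays the role of $1/\kappa$), and run exactly the Cauchy-integral argument from the proof of Proposition \ref{prop:existenceschwarz2}: the Cauchy transform $f$ of $\partial\phi_0/\partial z$ over $\partial\Omega$ splits as $f_+ + f_-^{0}$, and after subtracting off a correction $h = 2\,\mathrm{Re}(F)$ that is the real part of a holomorphic function on $\mathbb C \setminus \overline\Omega$ (so $\Delta$ is unchanged) we arrange that $\phi := \phi_0 + h$ satisfies $\partial\phi/\partial z = f_+ - (\text{meromorphic tail})$ on $\partial\Omega$. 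The point of the new data is that we want this meromorphic tail to be the given $f_-$, i.e. we want a Schwarz function $S_0$ of $(\Omega,\phi)$ with principal part at $0$ equal to $-f_-$ (modulo the constant and linear terms, which are handled by $ib_0 \in \mathbb R$ forcing the would-be area term to be real and by absorbing the $b_1/z$ term as before). If $f_-$ only extends across $\partial\Omega$ but not all the way to $0$, then $\phi$ is only defined on a neighbourhood of the complement of $\Omega$ and we define $\rho' := \Delta\phi$ there and extend arbitrarily inside, exactly as the $\kappa'$ trick in Proposition \ref{prop:existenceschwarz2}; when $\Omega$ is real analytic and $\rho$ real analytic, $f_-^0$ and hence $h$ extend holomorphically/harmonically across $\partial\Omega$ and we keep $\rho' = \rho$.

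Next I would set up the disc. Let $\Lambda = \{(z, \partial\phi/\partial z) : z \in U\} \subset \mathbb C^2 \subset \mathbb C \times \mathbb P^1$ as before, and let $\Sigma_0 = \{(z, S_0(z)) : z \in \Omega\}$ be the graph of the Schwarz function; it is an embedded holomorphic disc with boundary in the totally real $\Lambda$. The only structural change from Section \ref{sec:proofshorttime} is that $S_0$ now has a pole of order up to $N$ at $0$ (where $N$ is the largest index with $b_N \neq 0$ among the prescribed data) rather than a simple pole, so the constraint we impose on nearby discs is that they meet $H_1 = \mathbb P^1 \times \{\infty\}$ at $(0,\infty)$ with multiplicity $\geq N$, i.e. the contact of $\Sigma$ with $H_1$ to order $N$ is prescribed. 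Then I would run the Schottky double dimension count of Section 5: deform $(\Omega,\Lambda)$ to $(\{|z|<1\}, \{(z,\bar z)\})$ with Schwarz function $z^{-N}$, compute via the adjunction formula on $\mathbb P^1\times\mathbb P^1$ that the doubled normal bundle $\mathcal N$ on $\mathcal X \cong \mathbb P^1$ has degree $4n + 2g - 2 = 4N - 2$ (here $n = N$, $g = 0$), and then cut down by the jet/contact conditions at $(0,\infty)$ and its conjugate. Imposing $N$-fold contact at each of the two conjugate points removes $2N$ complex (hence, after taking $\rho$-invariants, $2N$ real... more precisely $N$ real at the fixed configuration, as in the passage around \eqref{eqlast}) dimensions, and the surjectivity argument of Section \ref{sec:proofshorttime} goes through verbatim since $\deg\mathcal N$ stays large enough. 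The upshot is a moduli space of the right dimension; the pole coefficients $b_1,\dots,b_N$ (equivalently the jet of $S$ at $0$) together with $t$ furnish coordinates, and a version of Proposition \ref{prop:schwarzheleshaw} with a higher-order pole shows $\frac{d}{dt}M_k = \pi\,\mathrm{Res}_0(z^k(S_t - S_0))$, which equals $-2\pi i b_{k+1}$ by the residue calculation already recorded before the statement of the theorem; linearity in $t$ follows because we demand the jet of $S_t$ at $0$ be $t$ times the prescribed one.

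Finally I would match the initial velocity to the given $V$. The tangent vector to the flow at $\Sigma_0$ is a $\rho$-invariant holomorphic section of $\mathcal N$ vanishing to order $N$ at $(0,\infty)$ and its conjugate; restricted to $\partial\Sigma_0$ it gives, via the identification of $E$ with the normal bundle of $\partial\Omega$ in $\Lambda$, a normal vector field on $\partial\Omega$, and the content of the computation just above is that its components against $z^k$ are exactly the prescribed $b_{k+1}$, i.e. it is precisely (a multiple of) $V\rho\,ds$ written in the $g(z)dz$ form from the discussion preceding Theorem \ref{thm:momentflowlinear2}. The hypothesis that $V$ is nowhere-vanishing and outward-pointing is what pins down the sign of $t$ and keeps $\Omega_t$ increasing, just as in Section \ref{sec:proofshorttime}. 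The main obstacle — and the step I would spend the most care on — is the bookkeeping of the contact conditions in the Schottky-double count: one must check that prescribing $N$-fold contact with $H_1$ at $(0,\infty)$ really does cut the moduli space down to a manifold of the expected dimension with $\bar\partial$ still surjective, i.e. that the jet-evaluation map on $H^0(\mathcal X, \mathcal N)$ is onto, and that this contact condition is the correct translation of ``$S_t$ has prescribed principal part at $0$'' rather than merely ``$S_t$ has a pole of order $\leq N$''. Getting Theorem \ref{thm:momentflowlinear} from Theorem \ref{thm:momentflowlinear2} is then immediate: given $V$, define $g$, $f$, $f_\pm$ as in the text, read off $b_k = $ (coefficients of $f_-$), note $ib_0 \in \mathbb R$ automatically since $M_0$ is real, and apply Theorem \ref{thm:momentflowlinear2} with $\rho' $ the resulting modified density.
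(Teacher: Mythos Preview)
Your proposal takes a genuinely different route from the paper, and it contains a real gap.

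The paper does \emph{not} encode the data $b_k$ in the pole structure of the Schwarz function. Instead it keeps the simple-pole constraint from the Hele-Shaw setup and \emph{perturbs the totally real submanifold}: with $\phi$ and $\Lambda$ constructed exactly as in Proposition~\ref{prop:existenceschwarz2}, it sets
\[
\Lambda_s := \Bigl\{\Bigl(z,\ \tfrac{\partial\phi}{\partial z}(z) + s\,f_-(z)\Bigr)\Bigr\}
\]
and invokes the perturbation result (Theorem~\ref{perturbationtheorem}) to deform the one-parameter Hele-Shaw family $\Sigma_t$ to a two-parameter family $\Sigma_{s,t}$ of discs with \emph{simple} pole at $0$ and boundary in $\Lambda_s$. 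Selecting $t=t(s)$ so that the residue of $S_s$ at $0$ equals that of $S_0$, and using $S_s = \partial\phi/\partial z + s f_-$ on $\partial\Omega_s$, gives directly
\[
M_k(s)-M_k(0) \;=\; -s\int_{\partial\Omega_s} z^k f_-\,dz \;=\; -2\pi i\, b_{k+1}\,s,
\]
which is the claimed linearity. The modification $\rho'=\Delta\phi$ comes from Proposition~\ref{prop:existenceschwarz2} exactly as before. No higher-order contact conditions, no enlarged Schottky-double count.

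Your scheme, by contrast, tries to realise $f_-$ as the principal part of $S_0$ at the origin and then to study discs with $N$-fold contact with $H_1$. The fatal problem is your phrase ``where $N$ is the largest index with $b_N\neq 0$'': no such $N$ need exist. The hypothesis is only that $f_-=\sum_k b_k z^{-k}$ extends holomorphically across $\partial\Omega$, which permits $f_-$ to have singularities anywhere inside $\Omega$; for instance $f_-(z)=1/(z-a)$ with $a\in\Omega$ has $b_k=a^{k-1}$ for all $k\ge 1$. So in general $f_-$ is not a polynomial in $1/z$, there is no finite-order pole to prescribe at $0$, and the entire moduli setup with $N$-fold contact collapses. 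Even when $f_-$ happens to be polynomial, your demand that ``the jet of $S_t$ at $0$ be $t$ times the prescribed one'' is not obviously a smooth one-dimensional slice of the moduli space without further argument, whereas the paper's $\Lambda_s$ trick makes the linear dependence on the parameter automatic from the boundary identity. Finally, note that Theorem~\ref{thm:momentflowlinear2} involves no vector field $V$ and no outward-pointing hypothesis; those belong to Theorem~\ref{thm:momentflowlinear}, and you have interwoven the two statements.
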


\begin{remark}
This theorem gives one precise way to state that these moments form parameters for the space of domains near $\Omega$.  The final statement, under the extra assumption that the boundary is real analytic, appears to be well known and to have been proved in a number of contexts; it follows, for instance, from \cite[Sec. 2]{Cherednichenko}.  We are not aware of a similar statement in the smooth case, or one such as Theorem \ref{thm:momentflowlinear} that uses the extra data that the initial normal vector field is outward pointing.
\end{remark}

The proof starts with the following uniqueness result:

\begin{proposition} \label{uniquenessvectorfields}
 Let $V_1$ and $V_2$ be two smooth normal vector fields defined on the boundary of $\Omega.$ If for all $k\geq 0$ we have that 
\begin{equation} \label{gtgtgt}
\int_{\partial \Omega}z^kV_1\rho ds= \int_{\partial \Omega}z^kV_2\rho ds,
\end{equation}
then it follows that $V_1=V_2.$
\end{proposition}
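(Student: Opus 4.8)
The plan is to show that the hypothesis forces the complex one-form $g(z)\,dz$ attached to $V_1-V_2$ to vanish identically on $\partial\Omega$. Write $V=V_1-V_2$, a smooth real normal vector field on $\partial\Omega$, and as in the discussion above express the real measure $V\rho\,ds$ on $\partial\Omega$ as the restriction of a complex one-form $g(z)\,dz$, where $g$ is a uniquely determined complex-valued function on $\partial\Omega$. The hypothesis \eqref{gtgtgt} says exactly that $\int_{\partial\Omega} z^k g(z)\,dz = 0$ for all $k\ge 0$; equivalently, all the moments of the Cauchy transform coincide. Forming the Cauchy integral $f = f^{\Omega,V} = \int_{\partial\Omega} \frac{g(w)}{z-w}\,dw$ and its interior and exterior pieces $f_\pm$, the vanishing of $\int_{\partial\Omega} z^k g(z)\,dz$ for all $k\ge 0$ is precisely the statement that the Laurent expansion of $f_-$ at infinity vanishes to all orders, i.e. $f_-\equiv 0$ on the complement of $\overline{\Omega}$. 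By the Sokhotski--Plemelj jump formula $f_+ = g + f_-$ on $\partial\Omega$, so $f_+ = g$ on $\partial\Omega$; thus $g$ extends to the holomorphic function $f_+$ on $\Omega$.

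Next I would use the reality constraint. Since $V$ is a \emph{real} normal vector field and $\rho$ is real and positive, the one-form $g(z)\,dz$ restricted to $\partial\Omega$ is, up to a fixed rotation by the unit tangent/normal, a real multiple of $ds$; concretely, if $z(s)$ is an arc-length parametrization with unit tangent $z'(s)$ and outward normal $n(s) = -iz'(s)$ (or $iz'(s)$, depending on orientation), then $V\rho\,ds = g\,dz$ forces $g(z(s)) z'(s) = \pm i V(s)\rho(s)$, so $g(z(s))\overline{z'(s)} = \mp i V(s)\rho(s)$ is purely imaginary along $\partial\Omega$. In other words $\overline{g\,dz}$ and $g\,dz$ differ by a sign on $\partial\Omega$, i.e. $\overline{g(z)}\,d\bar z = -g(z)\,dz$ on the boundary curve. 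Combining this with the fact that $g$ now extends holomorphically into $\Omega$: consider the holomorphic one-form $\eta = f_+(z)\,dz$ on $\Omega$. Its boundary restriction is purely imaginary (a real multiple of $i\,ds$ after the rotation), which means $\mathrm{Re}\int_{\partial\Omega'} \eta = 0$ over any boundary loop, and more importantly that the real part of a primitive of $\eta$ is locally constant on $\partial\Omega$ — so a primitive $F$ of $f_+$ on $\Omega$ (which exists since $\Omega$ is simply connected) has $\mathrm{Re}\,F$ constant on $\partial\Omega$. A harmonic function on $\Omega$, continuous up to the boundary and constant on $\partial\Omega$, is constant; hence $\mathrm{Re}\,F$ is constant on $\overline{\Omega}$, so $F$ is constant, so $f_+\equiv 0$, so $g\equiv 0$ on $\partial\Omega$, so $V\rho\,ds\equiv 0$, and since $\rho>0$ we conclude $V\equiv 0$, i.e. $V_1=V_2$.

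The main obstacle, and the step deserving the most care, is the second paragraph: correctly tracking the rotation that relates $V\rho\,ds$ to $g\,dz$ so as to deduce that the boundary values of the \emph{holomorphic} extension $f_+\,dz$ are purely imaginary (equivalently that a holomorphic primitive has constant real part on $\partial\Omega$). This is where the orientation conventions on $\partial\Omega$, the sign in $n = \pm i z'$, and the reality and positivity of $\rho$ all have to be used consistently; everything else is either the definition of the moments, the Sokhotski--Plemelj formula (already quoted in the excerpt), or the maximum principle for harmonic functions. One should also note where simple connectivity of $\Omega$ (a smooth Jordan domain) is used, namely to produce the global primitive $F$; for the bare statement of Proposition \ref{uniquenessvectorfields} this is automatic, but it is worth flagging since the analogous argument for multiply connected $\Omega$ would need periods of $\eta$ to be handled separately.
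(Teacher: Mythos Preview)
Your approach is essentially the paper's: first use the moment hypothesis (via the Cauchy integral and Sokhotski--Plemelj) to show that $g$ extends holomorphically across $\Omega$, then use the reality of the one-form $g\,dz$ on $\partial\Omega$ to force $g\equiv 0$. The only difference is in how the second step is executed. The paper pulls back by a Riemann map $u\colon D\to\Omega$, writes the pullback of $(g_1-g_2)\,dz$ as $h(w)\,dw$, and notes that reality on $|w|=1$ makes $wh(w)$ take boundary values in a real line; since $wh$ is holomorphic on $D$ and vanishes at $0$, it is identically zero. Your version stays on $\Omega$ and takes a holomorphic primitive $F$ of $f_+$, arguing via the maximum principle that $F$ is constant. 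These are the same argument in different coordinates; your version is slightly more intrinsic, the paper's slightly more concrete.

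One correction to your bookkeeping (which you rightly flagged as the delicate point): in the paper's conventions $V\rho\,ds$ is a \emph{real} measure on $\partial\Omega$ --- here $V$ is identified with its scalar normal component --- so $g(z(s))z'(s)=V(s)\rho(s)$ is real, not purely imaginary. Consequently it is $\mathrm{Im}\,F$, not $\mathrm{Re}\,F$, that is constant along $\partial\Omega$. This swap does not affect the argument at all: a holomorphic function whose imaginary part is constant on the boundary of a Jordan domain is constant, and you reach $f_+\equiv 0$ just the same.
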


\begin{proof}
Let $g_1$ and $g_2$ be the complex valued functions so that $V_i\rho ds=g_idz$ on $\partial \Omega$ for $i=1,2.$ By our calculations above it follows from (\ref{gtgtgt}) that $g_1-g_2$ extends holomorphically to $\Omega.$ Let $u$ be a conformal map from the unit disc to $\Omega$ and consider the pullback of $(g_1-g_2)dz$ to the unit disc. It will be a holomorphic one form which can be written as $h(w)dw,$ where $w$ denotes the holomorphic coordinate on the unit disc. The fact that $(g_1-g_2)dz$ restricts to a real measure on $\partial \Omega$ implies that the restriction of $h(w)dw$ to the unit circle is real. Thus in turn implies that $wh(w)$ is real valued on the unit circle. It follows that $wh(w)\equiv 0$ and thus $g_1=g_2,$ and consequently $V_1=V_2.$ 
\end{proof}

Recall that we have a totally real submanifold $\Lambda$ which is the graph of $\frac{\partial \phi}{\partial z}$ where $\Delta\phi=\rho$ outside of $\Omega.$ We also have a holomorphic disc $\Sigma$ attached to $\Lambda$ which projects down to $\Omega.$ As we saw in Section \ref{sec:proofshorttime}, $\Sigma$ belongs to a one parameter family of holomorphic discs attached to $\Lambda$ which goes through the point $(0,\infty),$ and the projection of these yields the Hele-Shaw flow (the permeability being $1/\rho$).

We record the following theorem.

\begin{theorem} \label{perturbationtheorem}
Let $\Sigma$ and $\Lambda$ be as above. Given any small smooth perturbation $\Lambda_s$ of $\Lambda,$ the family $\Sigma_t$ deforms to a smooth family $\Sigma_{s,t}$ of holomorphic discs going through $(0,\infty)$ attached to $\Lambda_s$ along their boundaries. \end{theorem}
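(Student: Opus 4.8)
The plan is to rerun the moduli-space argument of Section~\ref{sec:proofshorttime} with $s$ as an additional parameter; the only feature not already present in the $s=0$ case is that the totally real submanifold, and hence the Banach manifold of competitor curves, varies with $s$, and this is handled by an ambient isotopy. First I would choose a smooth family of diffeomorphisms $\Psi_s$ of $\mathbb{C}\times\mathbb{P}^1$ with $\Psi_0=\mathrm{id}$, $\Psi_s(\Lambda)=\Lambda_s$, and $\Psi_s$ equal to the identity near $(0,\infty)$; this is possible because $\Lambda_s$ is a small perturbation of $\Lambda$ and $(0,\infty)\notin\Lambda$, so an isotopy of $\Lambda$ onto $\Lambda_s$ extends to an ambient isotopy supported away from $(0,\infty)$. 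Then $J_s:=\Psi_s^*J$ is a smooth family of (almost) complex structures agreeing with $J$ near $(0,\infty)$, and $\Sigma'$ is $J_s$-holomorphic with boundary in $\Lambda$ through $(0,\infty)$ precisely when $\Psi_s(\Sigma')$ is holomorphic with boundary in $\Lambda_s$ through $(0,\infty)$. Thus it suffices to produce, for each small $s$, a smooth one-parameter family of $J_s$-holomorphic discs through $(0,\infty)$ with boundary in the \emph{fixed} submanifold $\Lambda$, reducing to $\Sigma_t$ when $s=0$.

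Next I would keep the Banach manifold $\mathcal B=C^{k+1,\alpha}(\Sigma,N,E)_0$ of embedded curves near $\Sigma$ with boundary in $\Lambda$ and passing through $(0,\infty)$ and the Banach bundle $\mathcal E$ with fibre $C^{k,\alpha}(\Sigma,\Lambda^{0,1}\otimes N)$, exactly as in Section~\ref{sec:proofshorttime}, but now for each $s$ one has the section $\mathcal S_s\colon\mathcal B\to\mathcal E$ whose zero locus is the set of $J_s$-holomorphic such curves, with $(s,\Sigma')\mapsto\mathcal S_s(\Sigma')$ smooth. Its partial linearisation in the $\mathcal B$-direction at $(0,\Sigma)$ is the restricted Cauchy--Riemann operator $\bar\partial$, which by the Schottky-double computation of Section~\ref{sec:proofshorttime} is surjective (its cokernel is $H^1_\rho(\mathcal X,\mathcal N)=0$, since $\deg\mathcal N=2>2g-2$) with one-dimensional kernel, hence has a bounded right inverse. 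Surjectivity with a bounded right inverse is an open condition, so the partial linearisation of $\mathcal S_s$ remains surjective for $(s,\Sigma')$ near $(0,\Sigma)$. The implicit function theorem for Banach manifolds, applied to the combined map $(s,\Sigma')\mapsto\mathcal S_s(\Sigma')$, then shows its zero set is a smooth manifold near $(0,\Sigma)$ that submerses onto the $s$-parameter space with one-dimensional fibres. Writing the fibre over $s$ as $\{\Sigma_{s,t}\}$ gives the required smooth family; the $s=0$ slice is the one-manifold $\{\Sigma_t\}$ of Section~\ref{sec:proofshorttime} by uniqueness, so indeed $\Sigma_{0,t}=\Sigma_t$. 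Finally, embeddedness, the property that $\pi_1\colon\Sigma_{s,t}\to\pi_1(\Sigma_{s,t})$ is an isomorphism, and passing through $(0,\infty)$ are open conditions valid at $(0,0)$, hence persist for small $(s,t)$; conjugating back by $\Psi_s^{-1}$ produces discs attached to $\Lambda_s$.

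I expect the main technical point to be the parametrised Fredholm setup: checking that the openness of surjectivity is uniform enough to drive the Banach implicit function theorem, and that $\Psi_s$, and therefore $J_s$, can be arranged to depend smoothly on $s$ while fixing $(0,\infty)$ and carrying $\Lambda$ to $\Lambda_s$. Everything else is a verbatim repetition of the $s=0$ discussion --- the degree and genus count that forces $h^1(\mathcal X,\mathcal N)=0$ and index $1$, and the identification of the moduli space with the $\rho$-invariant holomorphic sections of $\mathcal N$ vanishing at $(0,\infty)$ and $\rho((0,\infty))$ --- so no analytic input beyond Section~\ref{sec:proofshorttime} is required. (One may alternatively avoid the isotopy and note directly that totally realness of $\Lambda_s$ and the relevant cohomology vanishing persist under small deformation, but the openness-of-surjectivity route is cleanest.)
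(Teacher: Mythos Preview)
Your argument is correct, and the overall architecture --- enlarge the moduli problem by the parameter $s$, use that the restricted $\bar\partial$ is already surjective with index one, and apply the Banach implicit function theorem to get a two-dimensional zero set submersing onto $s$ --- matches the paper. The difference is in how the $s$-dependence is packaged. The paper keeps the complex structure fixed and builds $s$ directly into the Banach manifold: $\mathcal B$ consists of pairs $(\text{disc},s)$ with the disc attached to $\Lambda_s$, so the tangent space at $(\Sigma,0)$ is $C^{k+1,\alpha}(\Sigma,N,E)_0$ plus the one-dimensional span of $\frac{d}{ds}\Sigma_s$ for any smooth family $\Sigma_s$ with boundary in $\Lambda_s$; surjectivity of the enlarged $\bar\partial$ then follows immediately from surjectivity on the subspace, and its kernel is two-dimensional. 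You instead straighten out the boundary condition by an ambient isotopy $\Psi_s$, transferring the $s$-dependence into a family of almost complex structures $J_s$, and then invoke openness of surjectivity for the parametrised section $\mathcal S_s$. Your route is the standard one in the symplectic literature and has the advantage that the Banach manifold is literally fixed; the paper's route avoids constructing $\Psi_s$ but requires identifying the tangent space of the $s$-varying configuration space. Your parenthetical alternative at the end is essentially the paper's approach.
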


\begin{proof}
This is standard in the theory of moduli spaces of embedded holomorphic curves (see e.g.\ \cite{LeBrun}). Let $\Lambda_s$ be a smooth parametrised perturbation of $\Lambda,$ $s\in(-\epsilon,\epsilon).$ The Banach space $\mathcal{B}$ one considers is the space of embedded discs in some regularity class together with the parameter $s,$ where the discs passes through $(0,\infty)$ and is attached to $\Lambda_s.$ The  Banach bundle $\mathcal{E}$ is defined just as before, and so is the section $\mathcal{S}.$ The linearisation of $S$ is the $\bar{\partial}$ operator which maps the tangent space of $\mathcal{B}$ to the fibre of $\mathcal{E}.$ Let $\Sigma_s$ be a smooth family of smooth discs lying in $\mathcal{B}$ such that $\Sigma_0=\Sigma$ and $\Sigma_s$ attaches to $\Lambda_s.$ Then the tangent space of $\mathcal{B}$ is spanned by $C^{k+1,\alpha}(\Sigma, N, E)_0$ together with the derivative of $\Sigma_s$ at $s=0,$ which we will denote by $f.$ We have already established that the linearisation $\bar{\partial}$ is surjective, even when restricted to the subspace $C^{k+1,\alpha}(\Sigma, N, E)_0$. Therefore there is a section $g$ in $C^{k+1,\alpha}(\Sigma, N, E)_0$ such that $\bar{\partial}f=\bar{\partial}g.$ Thus the kernel of $\bar{\partial}$ is spanned by the kernel of the restricted $\bar{\partial}$ operator together with $f-g,$ so it has dimension $2.$ By the implicit function theorem for Banach manifolds the space of holomorphic discs nearby $\Sigma$ lying in $\mathcal{B}$ is a manifold of real dimension $2.$ Also the subsets which are attached to $\Lambda_s$ for a given $s$ will be one dimensional submanifolds.
\end{proof}

We now prove Theorem \ref{thm:momentflowlinear}. Let $f_-$ be the restriction to the complement of $\bar{\Omega}$ of 
\[ f= \int_{\partial \Omega} \frac{g(w)}{z-w} dw,\]
where $V\rho ds = g(z)dz$, and choose some smooth extension to a neighbourhood $U$ of the boundary of $\Omega.$   We will consider the smooth perturbation of $\Lambda$ given by $$\Lambda_s:=\{(z,\frac{\partial \phi}{\partial z}+sf_-): z\in U\supset \partial \Omega\}.$$

By Theorem \ref{perturbationtheorem} the family of holomorphic discs corresponding to the Hele-Shaw flow $\Sigma_t$ perturbs to a smooth family $\Sigma_{s,t}$ with boundary in $\Lambda_s$.   For each $s$ select the holomorphic disc $\Sigma'_s:=\Sigma_{s,t(s)}$ so that the corresponding Schwarz function $S_s$ has the same residue as $S_0.$ That $\Sigma'_s$ is a smooth family follows from the ordinary implicit function theorem in finite dimensions. Let $\Omega_s$ be the projection of $\Sigma'_s$, we have for small $s$,
\begin{eqnarray}
\int_{\Omega_s}z^k\rho dA-\int_{\Omega_0}z^k\rho dA =\int_{\partial \Omega_s}z^k\frac{\partial \phi}{\partial z}dz -\int_{\partial \Omega_0}z^k\frac{\partial \phi}{\partial z}dz=  \nonumber \\  =\int_{\partial \Omega_s}z^k(S_s-sf_-)dz-\int_{\partial \Omega_0}z^kS_0dz=-s\int_{\partial \Omega_s}z^kf_-dz. \label{momentcalculation}
\end{eqnarray} 
Here we used the fact that $S_s=\frac{\partial \phi}{\partial z}+sf_-$ on $\partial \Omega_s$ and that $S_s$ and $S_0$ have the same residue at $0.$ 

If $V_0$ is the initial normal vector field on $\partial \Omega$ corresponding to the perturbation $\Omega_s$, then \eqref{momentcalculation} implies that $V_0$ and $V$ induce the same initial variation of moments. From Proposition \ref{uniquenessvectorfields} it thus follows that the initial direction of $\Omega_s$ is given by $V.$   In particular this initial direction is, by assumption, non-vanishing and outward pointing, so by smoothness $\partial \Omega_s$ will lie in the complement of $\bar{\Omega}$ for small $s.$   But on  this complement $f_-$ is holomorphic, and $$\int_{\partial \Omega_s}z^kf_-dz= 2\pi i b_{k+1}$$ (which in particular is independent of $s$).  Thus by \eqref{momentcalculation} again, all the moments of $\Omega_s$ vary linearly as required.

The proof of Theorem \ref{thm:momentflowlinear2} is identical, only now since we do not have the hypothesis that the flow is outward initially we must take the moments with respect to $\rho' = \Delta \phi$ which is only equal to $\rho$ outside of $\Omega$ unless real analytic is assumed (as in Proposition \ref{prop:existenceschwarz2})

\noindent {\sc Julius Ross,  University of Cambridge, UK. \\j.ross@dpmms.cam.ac.uk}\vspace{2mm}\\ 
\noindent{\sc David Witt Nystrom,  University of Gothenburg, Sweden. \\\quad wittnyst@chalmers.se}

\end{document}